\documentclass[12pt,reqno]{amsart}

\usepackage[english]{babel}
\usepackage[utf8]{inputenc}
\usepackage[T1]{fontenc}
\usepackage{amsmath,amssymb,amsthm,mathtools}
\usepackage{bbm}
\usepackage{enumitem}
\usepackage{aliascnt}
\usepackage[margin=2.6cm]{geometry}
\usepackage{comment}
\usepackage{graphicx}
\usepackage{csquotes}
\usepackage{microtype}
\usepackage[skip=.5\baselineskip]{parskip}
\usepackage{anyfontsize}

\usepackage[dvipsnames]{xcolor}
\definecolor{highlight}{cmyk}{90,99,0,0}

\usepackage[lining]{libertine}
\usepackage{courier}
\usepackage[libertine,libaltvw,liby]{newtxmath}
\makeatletter
    \renewcommand*\libertine@figurestyle{LF}
\makeatother

\setlist[enumerate,1]{label=(\roman*),itemsep=0.9ex}
\setlist[itemize]{itemsep=0.9ex}

\usepackage[colorlinks=true,allcolors=highlight,hyperfootnotes=false,linktocpage=true]{hyperref}
\usepackage{url}
\usepackage[nameinlink,capitalize]{cleveref}
\usepackage{tikz,tikz-cd}
\usepackage{pgfplots}
\pgfplotsset{compat=1.18}
\usepackage{subcaption}

\hypersetup{
  colorlinks=true,
  linkcolor=blue,
  citecolor=blue,
  urlcolor=blue,
  pdftitle={When Finite Free Curves Split},
  pdfauthor={Baran Hashemi and Jihoon Hyun}
}

\hypersetup{
  pdftitle={When Finite Free Curves Split},
  pdfauthor={Baran Hashemi and Jihoon Hyun}
}

\newtheorem{theorem}{Theorem}[section]
\newaliascnt{proposition}{theorem}
\newtheorem{proposition}[proposition]{Proposition}
\aliascntresetthe{proposition}
\newaliascnt{lemma}{theorem}
\newtheorem{lemma}[lemma]{Lemma}
\aliascntresetthe{lemma}
\newaliascnt{corollary}{theorem}
\newtheorem{corollary}[corollary]{Corollary}
\aliascntresetthe{corollary}
\newaliascnt{conjecture}{theorem}
\newtheorem{conjecture}[conjecture]{Conjecture}
\aliascntresetthe{conjecture}
\theoremstyle{definition}
\newaliascnt{definition}{theorem}

\aliascntresetthe{definition}
\newaliascnt{example}{theorem}
\newtheorem{example}[example]{Example}
\aliascntresetthe{example}
\newaliascnt{question}{theorem}
\newtheorem{question}[question]{Question}
\aliascntresetthe{question}
\theoremstyle{remark}
\newaliascnt{remark}{theorem}
\newtheorem{remark}[remark]{Remark}
\aliascntresetthe{remark}
\newtheorem*{remark*}{Remark}

\crefname{theorem}{theorem}{theorems}
\Crefname{theorem}{Theorem}{Theorems}
\crefname{proposition}{proposition}{propositions}
\Crefname{proposition}{Proposition}{Propositions}
\crefname{lemma}{lemma}{lemmas}
\Crefname{lemma}{Lemma}{Lemmas}
\crefname{corollary}{corollary}{corollaries}
\Crefname{corollary}{Corollary}{Corollaries}
\crefname{conjecture}{conjecture}{conjectures}
\Crefname{conjecture}{Conjecture}{Conjectures}
\crefname{question}{question}{questions}
\Crefname{question}{Question}{Questions}
\crefname{definition}{definition}{definitions}
\Crefname{definition}{Definition}{Definitions}
\crefname{example}{example}{examples}
\Crefname{example}{Example}{Examples}
\crefname{remark}{remark}{remarks}
\Crefname{remark}{Remark}{Remarks}

\newcommand{\R}{\mathbb R}
\newcommand{\C}{\mathbb C}
\newcommand{\one}{\mathbbm{1}_n}
\newcommand{\Var}{\operatorname{Var}}
\newcommand{\spec}{\operatorname{spec}}
\newcommand{\He}{\operatorname{He}}
\newcommand{\cN}{\mathcal N}
\newcommand{\cM}{\mathcal M}

\newcommand{\eps}{\varepsilon}

\title[When Finite Free Curves Split]{When Finite Free Curves Split}

\author{Baran Hashemi}
\address{Max Planck Institute for Mathematics in the Sciences, Leipzig, Germany}
\email{baran.hashemi@mis.mpg.de}

\author{Jihoon Hyun}
\address{Korea Advanced Institute of Science and Technology (KAIST),\newline Daejeon 34141, Republic of Korea}
\email{qawbecrdtey@kaist.ac.kr}

\begin{document}

\begin{abstract}
We characterize equality in the finite free Stam and entropy-power inequalities, proving that Hermite polynomials are the unique extremizers among simple real-rooted inputs, up to independent translations and scalings. The proof turns this classification into a rigidity problem for projective plane curves. Using hyperbolicity and the Helton-Vinnikov theorem, we express the Jacobian defect as an off-diagonal squared norm in a definite symmetric pencil. Together with score transport, this yields a matrix proof of Stam for all real-rooted inputs. For each simple base pair, the directions in which the defect vanishes are the independent translations and common dilation, forming a three-dimensional subspace in every degree whose curves split into $n$ projective lines. At a collision, the leading configurations are again finite free convolutions of normalized derivatives of the velocity polynomials of the colliding input clusters. Combined with incidence counting, this local formula shows that at most one real fiber is singular, every real singularity is an ordinary totally real multiple point, and the ordered collision multiplicities determine the real normalization covering. For $n\geq3$, every non-split curve has at least $2n-2$ non-real projective discriminant zeros, counted with multiplicity, with equality attained by irreducible curves of geometric genus zero through every simple input pair. The leading Fisher-information coefficient is determined by the colliding tangent configurations, while the finite entropy term retains the gaps between clusters. For $n\geq3$, maximal logarithmic entropy divergence along the optimally weighted score direction is equivalent to Stam equality.
\end{abstract}

\maketitle

\section{Introduction}\label{sec:introduction}

\subsection{Motivation}
Stam's inequality and the entropy-power inequality describe how information changes when independent random variables are added, and Gaussian distributions attain equality in both. Finite free probability replaces probability measures by monic real-rooted polynomials and addition of independent variables by a convolution of their root configurations. Garza--Vargas, Srivastava, and Stier
proved the corresponding inequalities and showed that translated
and scaled Hermite polynomials attain equality~\cite{GVS26}. We prove the converse when both inputs have simple real roots. For entropy power, this settles the uniqueness assertion in Gribinski's conjecture on the simple-rooted locus
\cite[Conjecture~1]{Gribinski19}.

The proof leads to a very interesting geometric question. Starting from
two simple root configurations, suppose that the input roots move
along straight lines. We ask when the output roots also follow
straight lines. Translations and a common dilation have this
property, and we prove that they are the only possibilities, regardless of the degree or the arrangement of the input roots. Using the same family of curves, we give an explicit description of partial collisions and a way to recognize information equality from an entropy singularity.

Let \(n\geq2\), and write
\(P_\alpha(x)=\prod_{i=1}^n(x-\alpha_i)\) for a polynomial with distinct
real roots.  Its score vector and finite free Fisher information are
\begin{equation}\label{eq:intro-fisher}
 s(\alpha)_i=\sum_{j\ne i}\frac{1}{\alpha_i-\alpha_j},
 \qquad
 \Phi_n(P_\alpha)=\frac{4}{n(n-1)^2}\|s(\alpha)\|_2^2.
\end{equation}
The score is the force exerted on each root by the other roots in the
logarithmic repulsion.  Its squared norm is a multiple of the sum of
the inverse squares of the root gaps.  The corresponding entropy and
entropy power are
\begin{equation}\label{eq:intro-entropy}
 \chi_n[P_\alpha]=\frac{2}{n(n-1)}
       \sum_{i<j}\log|\alpha_i-\alpha_j|,
 \qquad \cN_n(P_\alpha)=e^{2\chi_n[P_\alpha]}.
\end{equation}
At repeated roots we set \(\Phi_n=+\infty\) and \(\chi_n=-\infty\),
so reciprocal Fisher information and entropy power are both zero. The score vector itself is used only for simple configurations. Along the polynomial heat flow, each root moves with its score as
velocity. This is why the logarithmic interaction controls both information and motion.

Finite free additive convolution, denoted by \(\boxplus_n\), preserves
real-rootedness and adds root means and variances.  We recall its
coefficient formula in \Cref{sec:preliminaries}.  The operation goes back
to Walsh and was developed in its finite free form by Marcus, Spielman,
and Srivastava~\cite{Walsh22,MSS22}. Garza--Vargas, Srivastava, and Stier
proved the finite free Stam and entropy-power inequalities
\begin{align}
 \frac{1}{\Phi_n(f\boxplus_n g)}
 &\geq \frac{1}{\Phi_n(f)}+\frac{1}{\Phi_n(g)},
 \label{eq:intro-stam}\\
 \cN_n(f\boxplus_n g)
 &\geq \cN_n(f)+\cN_n(g).
 \label{eq:intro-epi}
\end{align}
Their proof relates the score vectors to the Jacobian of the convolution
root map and uses convexity of the roots of a hyperbolic polynomial
\cite{GVS26}. These inequalities also have counterparts in free probability.
Voiculescu developed the free Fisher-information theory, including
its convolution inequalities~\cite{Voiculescu98}, while Szarek and
Voiculescu proved the free entropy-power inequality \cite{SzarekVoiculescu96}. The equality problem considered here is on their finite polynomial counterparts.

The expected equality configurations are Hermite polynomials.  To fix the
normalization, let \(\He_n\) be the monic probabilists' Hermite polynomial,
which satisfies \(\He_n''-x\He_n'+n\He_n=0\), and put
\begin{equation}\label{eq:intro-heat-hermite}
 H_{n,\tau}(x)=\exp\!\left(-\frac{\tau}{2}\partial_x^2\right)x^n
             =\tau^{n/2}\He_n(x/\sqrt\tau),\qquad \tau>0.
\end{equation}
Their convolution satisfies
\(H_{n,\tau}\boxplus_nH_{n,\sigma}=H_{n,\tau+\sigma}\).
Consequently, independently translated Hermite polynomials of arbitrary
positive scales attain equality in both inequalities.  Gribinski had
conjectured the entropy-power inequality together with this uniqueness
statement~\cite[Conjecture~1]{Gribinski19}.  While the inequalities and the existence of Hermite solutions are established in~\cite{GVS26}, in this study we settle the ultimate question of uniqueness of these solutions. 

The classical analogy provides the evidence for the last step of an equality proof. An affine classical score characterizes a Gaussian density
\cite{Stam59,Blachman65,Carlen91} where for a centered root configuration,
the condition that \(s(\alpha)\) be proportional to \(\alpha\)
characterizes a scaled Hermite polynomial. However, the problem is to force this radial score from equality in convolution.

Let \(\Omega(\alpha,\beta)\) be the increasingly ordered output roots,
write \(J=D\Omega(\alpha,\beta)\), and put
\(I_\alpha=\|s(\alpha)\|_2^2\), with \(I_\beta\) defined
similarly.  Score transport makes Stam equality equivalent to norm
preservation on the direction
\begin{equation}\label{eq:intro-optimal-score-direction}
 w_* = I_\alpha^{-1}s(\alpha)\oplus I_\beta^{-1}s(\beta).
\end{equation}

Garza--Vargas, Srivastava, and Stier express the Jacobian defect as a weighted sum of the Hessians of the output roots. Convexity of ordered root sums, a general property of hyperbolic polynomials~\cite[Corollary~3.3]{BGLS01}, makes this weighted combination positive semidefinite. This proves that \(J\) is a
contraction on the separately centered tangent space \cite[Lemmas~5.1--5.2 and Proposition~5.3]{GVS26}. They also identify the common radial direction as a singular direction with singular value \(1\) \cite[Remark~5.4]{GVS26}.
To determine the equality cases, one must show that no other separately centered direction preserves norm.

We resolve this obstruction by following the roots as the inputs move. Keeping each input mean fixed, we let the roots travel along straight lines. Our matrix representation shows that norm preservation forces the output roots to do the same. We then prove that this can happen only when both input configurations undergo a common dilation. This geometric rigidity identifies the one-dimensional nullspace and leads to the Hermite equality cases.

The same curves also reveal what happens when only some roots collide. The striking feature is that their separation is governed by another finite free convolution, now of lower degree. This smaller convolution determines the leading behavior of Fisher information and entropy near the collision. Finally, we return to the motion prescribed by the weighted score \(w_*\). Along this motion, the logarithmic entropy divergence is maximal exactly when Stam is sharp. Equality at the starting configuration can therefore be recognized through a collision elsewhere on its curve.

\subsection{Main idea}
Let the roots of the two input polynomials move along straight lines, with velocity vectors \(u\) and \(v\). As we convolve the polynomials at each time, the output roots trace paths of their own. When are these paths also straight? Writing \(w=u\oplus v\), the Jacobian gives their initial velocity \(Jw\). To answer the question, we will study the plane curve traced by the output roots throughout the motion. We do so in the homogeneous polynomial
\begin{equation}\label{eq:intro-ternary-form}
 h_w(x,y,t)=\mathcal F_n(x,y\alpha+tu,y\beta+tv),
 \qquad
 \mathcal F_n(x,\zeta,\eta)
       =(P_\zeta\boxplus_nP_\eta)(x).
\end{equation}
We call \(C_w=V(h_w)\subset\mathbf P^2_{\R}\) the \emph{finite free curve} of the chosen direction.  In the chart \(y=1\), its points record the output roots at time \(t\).  Since both input polynomials are real-rooted for every real \((y,t)\), the form \(h_w\) is hyperbolic in the \(x\)-direction.  In three variables, the Helton--Vinnikov theorem gives a representation \(h_w(x,y,t)=\det(xI-yA_w-tB_w)\) with real symmetric matrices~\cite{HeltonVinnikov07,LewisParriloRamana05}. 

This representation allows us to compare two descriptions of the same
motion. In an orthonormal eigenbasis of \(A_w\), the diagonal entries of
\(B_w\) are the output velocities \(Jw\). Mean and variance additivity
determine the full Frobenius norm of \(B_w\).  If both input velocities
are centered, subtraction gives
\begin{equation}\label{eq:intro-centered-defect}
 \|w\|_2^2-\|Jw\|_2^2
     =\|\operatorname{off}_{A_w}(B_w)\|_{\mathrm F}^2.
\end{equation}
Here \(\operatorname{off}_{A_w}(B_w)\) is the off-diagonal part in
this eigenbasis. Equality forces the two symmetric matrices to
commute. They can therefore be diagonalized together, and \(h_w\)
splits into real linear factors. Each output branch is consequently
a straight line for all real times. Thus, a condition on the initial
velocity has become a statement about the entire curve. One direction at a time is enough. We may choose the pencil separately for each \(w\), and
\eqref{eq:intro-centered-defect} ensures that its off-diagonal
energy is independent of that choice.

\Cref{fig:pencil-defect-geometry} gives a degree-two example.
Both motions have the same initial output roots and tangents, while only the zero-defect motion follows those tangents for all times.

\begin{figure}[!htbp]
\centering
% Exact degree-two comparison. This file requires only amsmath and tikz.
\begingroup
\definecolor{ffRoot}{RGB}{33,77,110}
\definecolor{ffOff}{RGB}{167,67,28}
\begin{tikzpicture}[font=\small,>=stealth,line cap=round,line join=round]
  \path[use as bounding box] (0.15,-4.05) rectangle (12.55,4.62);
  \draw[black!15] (6.35,-3.75) -- (6.35,4.30);

  % ------------------------------------------------------------------ left
  \node[font=\small\bfseries] at (3.15,4.30) {The roots leave their tangents};
  \node at (3.15,3.60)
    {\(B=\begin{pmatrix}-1&\color{ffOff}1\\
                         \color{ffOff}1&1\end{pmatrix}\)};
  \begin{scope}[shift={(3.99,0)},x=2.15cm,y=1.10cm]
    \draw[->,black!55,thin] (-1.58,0) -- (0.80,0) node[right] {\(t\)};
    \draw[->,black!55,thin] (0,-2.30) -- (0,2.22) node[above] {\(x\)};
    \draw[black!40,densely dotted] (-1,-2.15) -- (-1,1.62);
    \draw[black!55] (-1,0.06) -- (-1,-0.06);
    \node[below left,inner sep=1pt] at (-0.95,-0.15) {\(-1\)};
    \node[below right,fill=white,inner sep=1pt] at (0,-0.02) {\(0\)};
    % tangents at t=0, then the curve on top
    \draw[black!45,densely dashed,thin,domain=-1.45:0.55,samples=2]
      plot (\x,{1+\x});
    \draw[black!45,densely dashed,thin,domain=-1.45:0.55,samples=2]
      plot (\x,{-1-\x});
    \draw[ffRoot,very thick,domain=-1.45:0.55,samples=201]
      plot (\x,{sqrt((1+\x)^2+(\x)^2)});
    \draw[ffRoot,very thick,domain=-1.45:0.55,samples=201]
      plot (\x,{-sqrt((1+\x)^2+(\x)^2)});
    % the gap that never closes
    \draw[ffOff,line width=0.9pt,->] (-1,0.30) -- (-1,0.92);
    \draw[ffOff,line width=0.9pt,->] (-1,-0.30) -- (-1,-0.92);
    \fill[ffOff] (-1,1) circle[radius=1.7pt];
    \fill[ffOff] (-1,-1) circle[radius=1.7pt];
    \filldraw[fill=white,draw=black!55,line width=0.6pt]
      (-1,0) circle[radius=2.1pt];
    \node[align=center,text=black!65,font=\footnotesize] at (-1,2.00)
      {the tangents meet here;\\the roots do not};
    \fill[ffRoot] (0,1) circle[radius=1.8pt];
    \fill[ffRoot] (0,-1) circle[radius=1.8pt];
    \node[left,inner sep=1.5pt] at (-0.035,1.22) {\(1\)};
    \node[left,inner sep=1.5pt] at (-0.035,-1.22) {\(-1\)};
    \node[right,text=black!75,inner sep=1pt] at (-1.47,-1.95)
      {\(x^2=(1+t)^2+t^2\)};
  \end{scope}
  \node at (3.15,-3.30)
    {\(\underbrace{4}_{\|w\|_2^2}
       -\underbrace{2}_{\|Jw\|_2^2}
       =\underbrace{\color{ffOff}2}_{\|\operatorname{off}_A(B)\|_{\mathrm F}^2}\)};

  % ----------------------------------------------------------------- right
  \node[font=\small\bfseries] at (9.55,4.30) {The roots follow their tangents};
  \node at (9.55,3.60)
    {\(B=\begin{pmatrix}-1&\color{ffOff}0\\
                         \color{ffOff}0&1\end{pmatrix}\)};
  \begin{scope}[shift={(10.39,0)},x=2.15cm,y=1.10cm]
    \draw[->,black!55,thin] (-1.58,0) -- (0.80,0) node[right] {\(t\)};
    \draw[->,black!55,thin] (0,-2.30) -- (0,2.22) node[above] {\(x\)};
    \draw[black!40,densely dotted] (-1,-2.15) -- (-1,1.62);
    \draw[black!55] (-1,0.06) -- (-1,-0.06);
    \node[below left,inner sep=1pt] at (-0.95,-0.15) {\(-1\)};
    \node[below right,fill=white,inner sep=1pt] at (0,-0.02) {\(0\)};
    % curve first, then the tangents on top: they coincide
    \draw[ffRoot,very thick,domain=-1.45:0.55,samples=2]
      plot (\x,{1+\x});
    \draw[ffRoot,very thick,domain=-1.45:0.55,samples=2]
      plot (\x,{-1-\x});
    \draw[black!30,densely dashed,line width=1.0pt,domain=-1.45:0.55,samples=2]
      plot (\x,{1+\x});
    \draw[black!30,densely dashed,line width=1.0pt,domain=-1.45:0.55,samples=2]
      plot (\x,{-1-\x});
    \fill[ffRoot] (0,1) circle[radius=1.8pt];
    \fill[ffRoot] (0,-1) circle[radius=1.8pt];
    \fill[ffRoot] (-1,0) circle[radius=2.6pt];
    \node[align=center,text=black!65,font=\footnotesize] at (-1,2.00)
      {the roots meet here;\\both inputs collapse};
    \node[left,inner sep=1.5pt] at (-0.035,1.22) {\(1\)};
    \node[left,inner sep=1.5pt] at (-0.035,-1.22) {\(-1\)};
    \node[right,text=black!75,inner sep=1pt] at (-1.47,-1.95)
      {\(x=\pm(1+t)\)};
  \end{scope}
  \node at (9.55,-3.30)
    {\(\underbrace{2}_{\|w\|_2^2}
       -\underbrace{2}_{\|Jw\|_2^2}
       =\underbrace{\color{ffOff}0}_{\|\operatorname{off}_A(B)\|_{\mathrm F}^2}\)};
\end{tikzpicture}%
\endgroup%
\caption{The matrix defect in degree two.  Both panels use
\(\alpha=\beta=(-1/\sqrt2,1/\sqrt2)\), and have the same output
roots and initial velocities at \(t=0\). The input velocities are
\(u=(0,0)\), \(v=(-\sqrt2,\sqrt2)\) on the left and
\(u=v=\alpha\) on the right.  The curves are drawn from the exact
determinant equations shown.  Dashed lines are the initial tangents and on the right they coincide with the whole curve.  The two affine branches intersect at \(t=-1\), where the increasingly ordered roots exchange branches. These are specializations of
\Cref{ex:quadratic-pencil}.}
\label{fig:pencil-defect-geometry}
\end{figure}

The pencil has now done all it can. It says the curve is a union of $n$ real lines, but it says nothing about which input motions produced them, because nothing in the matrix argument sees the inputs. For that we count intersections. The remaining argument concerns intersections of these affine root lines. Fujie's multiplicity theorem says that an output collision can occur only
if sufficiently many input roots collide~\cite[Propositions~3.6
and~3.8]{Fujie26}. Two different output collision times would require
more input lines than are available, because two distinct affine lines can
intersect at most once.  If the output slopes are not all equal, the incidence
argument forces every output line through one common point.  At that time the output variance is zero and variance additivity forces both inputs to collapse as well. After their translations are removed, the two input motions must therefore be the same dilation. That is Theorem 1.2, and it is why the split locus stays three-dimensional however large $n$ becomes.

Everything so far has assumed that the curve splits into lines, which is the case the equality theorem needs and the exception rather than the rule. The incidence count only rules out two distinct real collision parameters for every curve in the family, split or not. So there is always at most one interesting fiber. What does the curve look like there, when only some of the output roots meet?

To answer that we return to the permutation formula for convolution. Suppose an input root of multiplicity $m$ and one of multiplicity $k$ produce an output root of multiplicity $r=m+k-n>0$. After magnifying the roots at that point, only permutations with exactly $r$ matches between the two input clusters survive. Averaging the surviving subsets differentiates the two velocity polynomials down to degree $r$ and averaging their matchings gives a degree-$r$ finite free convolution. So the local model of $\boxplus_n$ at a collision is $\boxplus_r$ and the operation is its own blow-up model.

The tangent polynomial supplies more than a multiplicity count.
For affine curves through a simple base pair, the normalized
derivatives have simple real roots, and so does their convolution.
These roots are the slopes of the branches through the singular
point. They also determine the leading Fisher coefficient, while
the number of colliding pairs determines the logarithmic entropy
coefficient. Normalization separates these branches, allowing us to follow
them around the real projective parameter line. Crossing the
exceptional fiber reverses each colliding block of ordered roots,
and the projective identification reverses the whole list.
Those two permutations determine the real covering.

\subsection{Main results}
The results come in two groups, both about the curves \(C_w\).
The first characterizes complete splitting and turns that classification
into the equality cases of Stam and entropy power.  The second describes
partial collisions, when only some of the output roots meet.
For a root vector \(\rho\), write
\(\bar\rho=n^{-1}\sum_i\rho_i\) and
\(\rho^\circ=\rho-\bar\rho\one\), where
\(\one\) is the all-ones vector.  The separately centered tangent space
is \(\mathcal V=\one^\perp\oplus\one^\perp\).  All norms on root
vectors are Euclidean unless another exponent is indicated.

\begin{theorem}[The matrix defect]\label{thm:intro-matrix-defect}
Let \(n\geq2\), and let \(\alpha,\beta\in\R^n\) be real root vectors such that
\(P_\alpha\boxplus_nP_\beta\) has simple roots.  Write
\(\gamma=\Omega(\alpha,\beta)\) and \(J=D\Omega(\alpha,\beta)\),
using the increasing order for the output roots.
For every \(w=u\oplus v\in\R^{2n}\), there are real symmetric
\(n\times n\) matrices \(A_w,B_w\) such that
\begin{equation}\label{eq:intro-definite-pencil}
 h_w(x,y,t)=\det(xI-yA_w-tB_w),
 \qquad \spec(A_w)=\{\gamma_1,\ldots,\gamma_n\}.
\end{equation}
For every such representation,
\begin{equation}\label{eq:intro-matrix-defect}
 \|w\|_2^2-\|Jw\|_2^2+2n\bar u\bar v
       =\|\operatorname{off}_{A_w}(B_w)\|_{\mathrm F}^2.
\end{equation}
The right side vanishes if and only if \(h_w\) is a product of \(n\)
real linear forms.
\end{theorem}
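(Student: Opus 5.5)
Existence of the pencil comes first. For every real \((y,t)\) with \(y\neq0\), the inputs \(P_{y\alpha+tu}\) and \(P_{y\beta+tv}\) are real-rooted, and \(\boxplus_n\) preserves real-rootedness. Hence \(h_w\) is real-rooted in \(x\) along every line through \(e=(1,0,0)\), so it is hyperbolic with respect to \(e\). It is monic of degree \(n\) in \(x\), and \(h_w(x,0,t)\) is the monic form \((x-t\bar u-t\bar v)^n\) up to the coefficient structure; in any case \(h_w(e)=1\). The Helton--Vinnikov theorem~\cite{HeltonVinnikov07,LewisParriloRamana05} then gives real symmetric \(A_w,B_w\) with \(h_w=\det(xI-yA_w-tB_w)\). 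Setting \(t=0,y=1\) yields \(\det(xI-A_w)=P_\alpha\boxplus_nP_\beta\), so \(\spec(A_w)=\{\gamma_i\}\).

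For the identity, take any such representation and an orthonormal eigenbasis \(q_1,\dots,q_n\) of \(A_w\) with \(A_wq_i=\gamma_iq_i\), \(\gamma_1<\dots<\gamma_n\). The eigenvalues are simple, so first-order perturbation of \(A_w+tB_w\) gives \(\frac{d}{dt}\lambda_i(A_w+tB_w)|_{t=0}=q_i^{\top}B_wq_i\). These eigenvalues are the ordered roots of \(h_w(\cdot,1,t)\), namely \(\Omega(\alpha+tu,\beta+tv)\), so the diagonal of \(B_w\) in this basis is \(Jw\). Next, \(\|B_w\|_{\mathrm F}^2=\operatorname{tr}B_w^2\) can be read off \(h_w(x,0,t)=\det(xI-tB_w)=t^n\det((x/t)I-B_w)\), which equals \(P_u\boxplus_nP_v\) homogenized. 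Thus \(\spec(B_w)\) is the root multiset of \(P_u\boxplus_nP_v\). Mean additivity gives \(\operatorname{tr}B_w=n(\bar u+\bar v)\). Variance additivity (in the paper's normalization, \(\sum\rho_i^2-n\bar\rho^2\) is additive) gives
\[
\operatorname{tr}B_w^2=\|u\|^2-n\bar u^2+\|v\|^2-n\bar v^2+n(\bar u+\bar v)^2=\|w\|_2^2+2n\bar u\bar v.
\]
Subtracting the diagonal part \(\|Jw\|^2\) from \(\operatorname{tr}B_w^2\) yields \eqref{eq:intro-matrix-defect}. The independence of the right side from the chosen representation follows automatically.

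For the final equivalence: if the off-diagonal part vanishes, \(B_w\) is diagonal in the eigenbasis of \(A_w\). Then \(h_w=\prod_i(x-\gamma_iy-(Jw)_it)\), a product of real linear forms. Conversely, suppose \(h_w=\prod_i\ell_i\) with \(\ell_i=x-a_iy-b_it\) real; each factor must be monic in \(x\), since \(h_w(e)=1\). Then for every \(t\), the roots of \(h_w(\cdot,1,t)\) are the \(a_i+b_it\). Choose an \(\varepsilon\) such that the ordering is constant for \(|t|<\varepsilon\); this is possible because the \(a_i=\gamma_{\sigma(i)}\) are distinct. Then the ordered eigenvalues of \(A_w+tB_w\) are affine in \(t\) near \(0\). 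Second-order perturbation theory gives
\[
\tfrac12\sum_i\lambda_i''(0)=\sum_i\sum_{j\neq i}\frac{(q_i^{\top}B_wq_j)^2}{\gamma_i-\gamma_j}=0,
\]
which is trivially antisymmetric and so is useless. I would therefore instead use \(\operatorname{tr}B_w^2\) directly. Comparing the \(y=0\) restriction, \(\spec(B_w)=\{b_i\}\), so \(\operatorname{tr}B_w^2=\sum b_i^2=\|Jw\|^2\). Hence the off-diagonal part is zero.

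The main subtle point, I expect, is the Helton--Vinnikov applicability and normalization: hyperbolicity needs to be verified for all directions, including \(y=0\), where the inputs are \(P_{tu}\) and \(P_{tv}\) and remain real-rooted. The theorem provides a monic representation with identity coefficient on \(x\). Everything else is trace bookkeeping.
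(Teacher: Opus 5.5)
Your proof is correct and follows the paper's argument: Helton--Vinnikov gives the pencil, first-order perturbation identifies $\operatorname{diag}_{A_w}(B_w)$ with $Jw$, mean and variance additivity give $\|B_w\|_{\mathrm F}^2=\|w\|_2^2+2n\bar u\bar v$, and a Frobenius-norm comparison handles the converse. Reading $\operatorname{tr}B_w^2$ off the fiber $y=0$, where $\det(xI-B_w)=P_u\boxplus_nP_v$, is the same computation as the paper's $t^2$-coefficient of $\operatorname{tr}(A+tB)^2$; your early aside that $h_w(x,0,t)$ equals $(x-t(\bar u+\bar v))^n$ is false in general (it is $P_{tu}\boxplus_nP_{tv}$, as you correctly use later), but nothing depends on it.
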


Only the output is required to be simple here; either simple input
suffices, by Fujie's theorem.  On \(\mathcal V\), the identity proves
that \(J\) is a contraction.  Score transport and the elementary
boundary cases then give a matrix proof of Stam for all real-rooted
inputs (\Cref{cor:matrix-stam}).  This answers the matrix-proof question
raised in~\cite[\S5, footnote~2]{GVS26}.

Classifying the directions on which the defect vanishes does require
both inputs to be simple.  This is a necessary hypothesis for the
classification, as the example in \Cref{rem:intro-simplicity-obstruction}
will show.

\begin{theorem}[Classification of split finite free curves]
\label{thm:intro-split-locus}
For simple real inputs \(\alpha,\beta\), the form \(h_{u\oplus v}\)
splits into \(n\) real linear factors if and only if there exist
\(a,b,c\in\R\) such that
\begin{equation}\label{eq:intro-split-directions}
 u=a\one+c\alpha^\circ,
 \qquad v=b\one+c\beta^\circ.
\end{equation}
\end{theorem}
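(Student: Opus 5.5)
The sufficiency direction is a direct computation. If \(u=a\one+c\alpha^\circ\) and \(v=b\one+c\beta^\circ\), then
\[
y\alpha+tu=(y\bar\alpha+ta)\one+(y+tc)\alpha^\circ ,
\]
and similarly for \(\beta\). Since \(\boxplus_n\) commutes with translations of either input and with a common dilation of both, we get
\[
h_w(x,y,t)=(y+tc)^n\,(P_{\alpha^\circ}\boxplus_nP_{\beta^\circ})\!\left(\frac{x-y(\bar\alpha+\bar\beta)-t(a+b)}{y+tc}\right).
\]
This equals \(\prod_i\bigl(x-y(\bar\alpha+\bar\beta)-t(a+b)-(y+tc)\gamma_i^\circ\bigr)\), where the \(\gamma_i^\circ\) are the roots of the centered convolution. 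It is a product of \(n\) real linear forms. The identity holds for \(y+tc\neq0\) and then everywhere as a polynomial identity.

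For necessity, I first remove translations. Replacing \(u\) by \(u-\bar u\one\) and \(v\) by \(v-\bar v\one\) amounts to a linear change of coordinates \(x\mapsto x-t(\bar u+\bar v)\), which preserves splitting. So I may assume \(u,v\in\one^\perp\). In the chart \(y=1\), splitting means every output root \(\gamma_i(t)\) of \(P_{\alpha+tu}\boxplus_nP_{\beta+tv}\) is affine in \(t\), say \(\gamma_i+t\lambda_i\), for all real \(t\). Variance additivity gives
\[
\sum_i(\gamma_i+t\lambda_i)^2-n\bar\gamma(t)^2=\|\alpha+tu\|^2-n\bar\alpha^2+\|\beta+tv\|^2-n\bar\beta^2 .
\]
Comparing \(t^2\)-coefficients yields \(\|\lambda^\circ\|^2=\|u\|^2+\|v\|^2\), which is consistent with the vanishing defect of \Cref{thm:intro-matrix-defect}. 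Note also that the output means are constant in \(t\).

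The main step is the incidence argument.
\begin{itemize}
\item If all slopes \(\lambda_i\) are equal, centering forces \(\lambda=0\). Then \(\|u\|^2+\|v\|^2=0\), so \(u=v=0\), which is the case \(c=0\).
\item Otherwise two lines with different slopes meet at some real \(t_0\), and the output has a repeated root there. By Fujie's multiplicity theorem, an output root of multiplicity \(r\) requires input roots of multiplicities \(m,k\) with \(m+k-n\ge r\).
\end{itemize}
Input roots \(\alpha_i+tu_i\) are affine lines, and \(\alpha\) is simple, so any two of them coincide at most once. I would then show that two distinct collision times are impossible by counting the input coincidences required at each time against those available. Similarly, two separate output clusters at one time should force all lines through a single common point. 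The expected conclusion is that at \(t_0\) the output is a single root of multiplicity \(n\), which needs \(m+k\ge 2n\), so \(m=k=n\). Both inputs then collapse at \(t_0\): \(\alpha+t_0u\) and \(\beta+t_0v\) are multiples of \(\one\). Since \(u,v\) are centered and \(\alpha,\beta\) are simple, \(t_0\neq0\), and we get \(u=-t_0^{-1}\alpha^\circ\) and \(v=-t_0^{-1}\beta^\circ\), which is the claim with \(c=-1/t_0\).

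I expect the combinatorial incidence count to be the main obstacle. The difficulty is ruling out configurations in which several output clusters occur, possibly at different times, each supported by partial input collisions. My first attempt would sum Fujie's inequality \(m+k-n\ge r\) over all clusters and all collision times. I would bound the total number of pairwise input coincidences by \(\binom n2\) for each input, using that distinct affine lines meet at most once. On the output side, a split configuration with non-constant slopes produces at least \(\binom n2\)-many pairwise output coincidences counted over all times, unless the lines are concurrent. If this counting is too weak, I would fall back on the local collision formula: a partial collision cannot have all branches straight unless the lower-degree model is itself a dilation, and I would then argue by induction on \(r\).
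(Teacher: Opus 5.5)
Most of your proposal is correct. The sufficiency computation and the translation reduction are fine. In the equal-slope case, your variance comparison $\|\lambda^\circ\|^2=\|u\|^2+\|v\|^2$ validly replaces the paper's use of the zero-defect identity. The final step is also fine: an $n$-fold output root forces $m=k=n$, so both inputs collapse at some $t_0\neq0$, which gives $u=-t_0^{-1}\alpha^\circ$ and $v=-t_0^{-1}\beta^\circ$.

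The gap is the step you flag as the main obstacle. It is the heart of the proof, you do not supply it, and the counting you propose cannot supply it. Bounding the pairwise input coincidences by $\binom n2$ per input does not exclude two collision times. Take $n=4$ with blocks $m_1=m_2=k_1=k_2=3$ at two times. These satisfy Fujie's requirement $m_\ell+k_\ell\geq n+2$, while $\binom32+\binom32=6=\binom42$, so neither input's total is exceeded. The same failure occurs for every $n\geq4$. Your output-side claim is also false: parallel output lines never meet, so a non-concurrent split configuration need not produce $\binom n2$ coincidences. The fallback via the local collision formula and induction on $r$ is not an argument as stated.

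The missing idea is to use the block structure rather than a total pair count. Suppose first-input blocks $I_1,I_2$ at two distinct parameters shared labels $i\neq j$. Then the nonzero affine function $(\alpha_i-\alpha_j)+t(u_i-u_j)$ would vanish twice, which is impossible. Hence $|I_1\cap I_2|\leq1$, so $m_1+m_2\leq n+1$, and likewise $k_1+k_2\leq n+1$. This contradicts $(m_1+k_1)+(m_2+k_2)\geq 2n+4$.

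Once the collision time $t_0$ is unique, you still need the argument for concurrency, which you only assert. Take two output lines meeting at $t_0$; they have distinct slopes, since they are distinct at $t=0$. Any third line not through their meeting point would cross one of them at a time other than $t_0$ unless it were parallel to both, and it cannot be parallel to both. This must also exclude a single line off the cluster, for instance the composition $(n-1,1)$, which Fujie's theorem alone permits. It is not only a matter of two separate clusters. With these two steps filled in, your outline becomes the paper's proof.
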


\begin{remark*}
We can produce straight output branches by moving the inputs without changing their shapes: translate each root configuration separately, and rescale both centered configurations by the same factor.
\end{remark*}

These two translations and the common dilation span the \emph{split locus}
\begin{equation}\label{eq:intro-split-locus}
 L_{\alpha,\beta}
 =\operatorname{span}\{\one\oplus0,\ 0\oplus\one,\
                       \alpha^\circ\oplus\beta^\circ\}.
\end{equation}
Translation and dilation covariance of convolution make every direction
in this space split the curve. The theorem is the converse, stating that straight output branches force the inputs to move in exactly this way.  Convolution mixes the two configurations, but it cannot conceal a change in their shapes or a difference between their dilation rates.

The split locus is therefore 3-dimensional however large the degree. Raising \(n\) adds input velocities and creates many more ways for output
lines to meet.  Yet reflection symmetries, rational relations among root
gaps, and roots arbitrarily close to a collision produce no extra
splitting direction, as long as both inputs remain simple.  The incidence
argument rules out these possibilities by forcing both inputs to collapse
in one projective fiber and for pure translations, that fiber is at infinity.

The rigidity of the split locus can also be measured against
a naive dimension count. Products of \(n\) linear forms have codimension \(\binom n2\) in the projective space of ternary degree-\(n\) forms.  For the \(2n\)-dimensional space of input directions, the transverse prediction for the inverse image is therefore \(2n-\binom n2\).
For \(n\geq6\), this number is negative and a transverse family could
contain no split form.  Our inverse image has dimension three in every
degree, an excess of \(\binom{n-2}{2}\) over that prediction
(\Cref{prop:split-excess}). This comparison measures how far the finite
free family is from general position and it makes no assertion about the
complex inverse-image scheme.

\begin{corollary}[The unit singular direction]
\label{cor:intro-jacobian-rigidity}
Let \(\alpha,\beta\in\R^n\) both have simple coordinates, and let
\(J=D\Omega(\alpha,\beta)\). The restriction \(J|_{\mathcal V}\)
has operator norm \(1\), and its singular value \(1\) is simple.
More precisely,
\begin{equation}\label{eq:intro-unit-singular-space}
 \ker\!\left(I_{\mathcal V}-(J|_{\mathcal V})^*(J|_{\mathcal V})\right)
       =\operatorname{span}\{\alpha^\circ\oplus\beta^\circ\}.
\end{equation}
\end{corollary}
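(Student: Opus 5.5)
We combine \Cref{thm:intro-matrix-defect} with \Cref{thm:intro-split-locus}. Since \(\alpha,\beta\) are simple, Fujie's theorem (as quoted after \Cref{thm:intro-matrix-defect}) makes the output \(\gamma=\Omega(\alpha,\beta)\) simple. Hence \(\Omega\) is smooth near \((\alpha,\beta)\), \(J\) is defined, and \Cref{thm:intro-matrix-defect} applies to every \(w=u\oplus v\). For \(w\in\mathcal V\) we have \(\bar u=\bar v=0\), so \eqref{eq:intro-matrix-defect} reduces to
\[
 \|w\|_2^2-\|Jw\|_2^2=\|\operatorname{off}_{A_w}(B_w)\|_{\mathrm F}^2\geq0 .
\]
Thus \(\langle w,(I_{\mathcal V}-(J|_{\mathcal V})^*(J|_{\mathcal V}))w\rangle\geq0\) on \(\mathcal V\), and \(\|J|_{\mathcal V}\|\leq1\).

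The kernel of a positive semidefinite operator is the zero set of its quadratic form. So \(w\in\mathcal V\) lies in the kernel in \eqref{eq:intro-unit-singular-space} iff \(\|Jw\|=\|w\|\), iff the off-diagonal term vanishes. By the last clause of \Cref{thm:intro-matrix-defect}, this holds iff \(h_w\) is a product of \(n\) real linear forms. By \Cref{thm:intro-split-locus} (both inputs simple), this happens iff \(w\in L_{\alpha,\beta}\). Hence the kernel equals \(L_{\alpha,\beta}\cap\mathcal V\).

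It remains to compute \(L_{\alpha,\beta}\cap\mathcal V\). A vector \((a\one+c\alpha^\circ)\oplus(b\one+c\beta^\circ)\) lies in \(\mathcal V\) iff \(na=0\) and \(nb=0\), because \(\alpha^\circ,\beta^\circ\perp\one\). The intersection is therefore \(\operatorname{span}\{\alpha^\circ\oplus\beta^\circ\}\), which is nonzero since simple roots with \(n\geq2\) give \(\alpha^\circ\neq0\). This shows the unit singular space is exactly one-dimensional. In particular the value \(1\) is attained, so the operator norm equals \(1\), and that singular value is simple.

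As a sanity check, dilation covariance gives \(J(\alpha^\circ\oplus\beta^\circ)=\gamma^\circ\), and variance additivity gives \(\|\gamma^\circ\|^2=\|\alpha^\circ\|^2+\|\beta^\circ\|^2\), confirming norm preservation directly. The main point requiring care is the identification of the kernel with the equality set, which needs the positive semidefiniteness on \(\mathcal V\) established in the first step. All the real difficulty is absorbed into \Cref{thm:intro-split-locus}, which we assume.
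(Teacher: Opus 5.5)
Your proof is correct and follows the paper's argument. The radial vector preserves norm by dilation covariance and variance additivity, and any other norm-preserving centered direction splits the curve by the matrix defect, hence lies in the split locus. You also make explicit two steps the paper leaves implicit: positive semidefiniteness on \(\mathcal V\), which identifies the kernel with the norm-preserving set, and the computation \(L_{\alpha,\beta}\cap\mathcal V=\operatorname{span}\{\alpha^\circ\oplus\beta^\circ\}\).
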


\begin{remark}
\label{rem:intro-simplicity-obstruction}
The distinction between the two simplicity hypotheses is important.
If \(P_\beta=x^n\), the output is \(P_\alpha\), and the matrix
defect identity still holds for simple \(\alpha\).  But every
centered direction \(u\oplus0\) now preserves norm.  These directions
form a space of dimension \(n-1\), which is larger than a line when
\(n\geq3\).  In degree two this space is already a line, so the
example does not distinguish the hypotheses.
\end{remark}

We now take \(w\) to be the weighted score direction \(w_*\) from
\eqref{eq:intro-optimal-score-direction}.  Score transport identifies
Stam equality with norm preservation on this direction.
\Cref{cor:intro-jacobian-rigidity} then forces \(w_*\) into the radial
line, so each score is proportional to its own centered root vector.
This rootwise identity gives a second-order differential equation for
each input polynomial.  After translation and positive rescaling, it is
the Hermite equation, whose monic degree-\(n\) solution is \(\He_n\).

For entropy power, we first normalize the two inputs separately
to unit entropy power and apply the heat-flow interpolation for
entropy concavity. Equality forces the weighted Fisher defect to
vanish along the interpolation. Continuity at the simple endpoint
inputs then gives the same Hermite classification. This yields
the following main equality theorem, proved in
\Cref{sec:information-equality}.

\begin{theorem}[Hermite equality]\label{thm:information-equality}
\label{thm:intro-stam-equality}\label{thm:intro-epi-equality}
Let \(f,g\) be monic degree-\(n\) polynomials with simple real roots.
Equality holds in the finite free Stam inequality
\eqref{eq:intro-stam} if and only if \(f\) and \(g\) are independently
translated and positively scaled copies of \(\He_n\).  The same
classification holds for equality in the entropy-power inequality
\eqref{eq:intro-epi}.
\end{theorem}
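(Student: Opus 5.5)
The proof separates into the Stam case and the entropy-power case. The Stam case goes through score transport and \Cref{cor:intro-jacobian-rigidity}; the entropy case is reduced to the Stam case along a heat-flow interpolation.

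\textbf{Stam equality.} The ``if'' direction is the existing Hermite computation of~\cite{GVS26}, using \(H_{n,\tau}\boxplus_nH_{n,\sigma}=H_{n,\tau+\sigma}\) together with translation covariance. We only need to check the value of \(\Phi_n\) on scaled Hermite polynomials.

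For the converse, let \(\alpha,\beta\) be the simple root vectors of \(f,g\). Since both inputs are simple, the output \(\gamma=\Omega(\alpha,\beta)\) is simple by Fujie's theorem, so \(J\) is defined. Score transport gives
\[
s(\gamma)=(J^{+})^{*}\text{-type}
\]
relation; concretely, \(J^{*}s(\gamma)\) is paired with the input scores. I will use it in the form
\[
\langle s(\gamma),Jw\rangle=\langle s(\alpha),u\rangle+\langle s(\beta),v\rangle,
\]
obtained by differentiating the score pairing along \(w\).

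Take \(w=w_*\) from \eqref{eq:intro-optimal-score-direction}. Scores are centered, so \(w_*\in\mathcal V\). The right side of the pairing then equals \(2\). Cauchy--Schwarz gives
\[
2\le\|s(\gamma)\|\,\|Jw_*\|\le\|s(\gamma)\|\,\|w_*\|,
\qquad
\|w_*\|^2=I_\alpha^{-1}+I_\beta^{-1}.
\]
This yields \(I_\gamma^{-1}\cdot 4\ge\ldots\), which after normalization is exactly \eqref{eq:intro-stam}. Equality in Stam therefore forces \(\|Jw_*\|=\|w_*\|\), with \(w_*\in\mathcal V\).

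By \Cref{cor:intro-jacobian-rigidity}, equality then forces \(w_*=c(\alpha^\circ\oplus\beta^\circ)\). So \(s(\alpha)=cI_\alpha\alpha^\circ\) and \(s(\beta)=cI_\beta\beta^\circ\), with \(c>0\) because \(\langle s(\alpha),\alpha^\circ\rangle=\binom n2>0\).

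A radial score \(s(\alpha)_i=\kappa(\alpha_i-\bar\alpha)\) means \(P''(\alpha_i)/(2P'(\alpha_i))=\kappa(\alpha_i-\bar\alpha)\). Hence the degree-\(n\) polynomial \(P''-2\kappa(x-\bar\alpha)P'\) vanishes at every root of \(P\), so it equals \(-2\kappa nP\) after matching leading coefficients. After the substitution \(x=\bar\alpha+y/\sqrt{2\kappa}\) this is the Hermite equation. Its monic polynomial solution is unique, since the recursion on coefficients is determined, and it gives a translated, positively scaled \(\He_n\).

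\textbf{Entropy power.} Normalize \(f,g\) to unit entropy power by dilation. Dilation is a convolution-compatible operation that scales \(\cN_n\) quadratically. I then use the GVS interpolation
\[
f_t=f\boxplus_nH_{n,\lambda t},\qquad g_t=g\boxplus_nH_{n,\mu t},
\]
with the weights chosen as in their proof. Along this flow, \(\frac{d}{dt}\chi\) is expressed through \(\Phi_n\) via de Bruijn's identity, and the entropy deficit is an integral of weighted Stam deficits. Those deficits are nonnegative by the first part.

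If entropy-power equality holds, the integrand vanishes for almost every \(t\), and then for all \(t\) by continuity. Heat flow keeps roots simple, so every intermediate configuration is simple. Letting \(t\to0\) and using continuity of \(\Phi_n\) at the simple endpoints, the Stam deficit vanishes at \(t=0\) in the weighted form. Applying the Stam classification then gives Hermite inputs.

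\textbf{Main obstacle.} The hard step is this limit argument. I must verify that vanishing of the integrated deficit really yields the Stam equality condition at \(t=0\), and that the interpolation weights force the \emph{unweighted} Stam equality rather than a weighted variant. If continuity alone is not enough, I would instead invoke \Cref{cor:intro-jacobian-rigidity} directly at \(t=0\) along the weighted score direction.
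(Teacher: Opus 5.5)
\textbf{The Stam step rests on a false identity.} Your pairing $\langle s(\gamma),Jw\rangle=\langle s(\alpha),u\rangle+\langle s(\beta),v\rangle$, i.e.\ $J^{*}s(\gamma)=s(\alpha)\oplus s(\beta)$, fails for every input pair along the radial direction.
\begin{itemize}
\item For $w=\alpha^\circ\oplus\beta^\circ$ one has $Jw=\gamma^\circ$ by \eqref{eq:radial-jacobian-vector}. By \eqref{eq:radial-pairing} the left side is then $\binom n2$, while the right side is $2\binom n2$.
\item Since $2s$ is the gradient of $\log\operatorname{Disc}$, your identity would make the output discriminant locally a constant multiple of the product of the input discriminants, which is false already for $n=2$.
\item Even granting it, your Cauchy--Schwarz chain gives $I_\gamma(I_\alpha^{-1}+I_\beta^{-1})\geq4$. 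That is a \emph{lower} bound on the output Fisher information, the wrong direction for Stam. It is also incompatible with Stam equality, where this product equals $1$; every simple pair in degree two is such a case.
\end{itemize}
So nothing you wrote shows that Stam equality forces $\|Jw_*\|_2=\|w_*\|_2$.

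\textbf{The fix.} Use score transport in its forward form \eqref{eq:score-transport}, $J(as(\alpha)\oplus bs(\beta))=(a+b)s(\gamma)$. With $a=I_\alpha^{-1}$ and $b=I_\beta^{-1}$ this gives $Jw_*=\|w_*\|_2^2\,s(\gamma)$, hence $\|Jw_*\|_2^2/\|w_*\|_2^2=I_\gamma(I_\alpha^{-1}+I_\beta^{-1})$. The normalizing constant of $\Phi_n$ is the same for all three polynomials. Therefore Stam is exactly the contraction $\|Jw_*\|_2\leq\|w_*\|_2$ on $\mathcal V$, and Stam equality is exactly $\|Jw_*\|_2=\|w_*\|_2$; no Cauchy--Schwarz is involved. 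From that point your argument (rigidity corollary, positive radial constants, Hermite ODE) is the paper's.

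\textbf{Entropy power.} Your outline follows the paper's route: separate normalization to unit entropy power, heat-flow interpolation with de Bruijn, and endpoint continuity on the simple locus. However, the step you flag needs your fallback, not your primary plan.
\begin{itemize}
\item Up to a positive factor, the integrand is the weighted Fisher deficit $\lambda\Phi_n(p_t)+(1-\lambda)\Phi_n(q_t)-\Phi_n(r_t)$, with $\lambda=\cN_n(f)/(\cN_n(f)+\cN_n(g))$ fixed by the entropy powers.
\item This is the weighted score contraction with weights $(\lambda,1-\lambda)$ on the rescaled root vectors $\sqrt\lambda\,\alpha$ and $\sqrt{1-\lambda}\,\beta$, not the Stam-optimal weights. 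So ``applying the Stam classification'' does not directly apply.
\item What does apply is the rigidity corollary for $as(\alpha)\oplus bs(\beta)$ with arbitrary $a,b>0$, which still forces radial scores; this is \Cref{prop:weighted-score-equality} and \Cref{cor:weighted-fisher-equality}.
\item If you run the heat flow from the inputs toward infinite time, you must also show that the entropy deficit tends to zero there. The paper avoids this with the compact interpolation $p_t=\sqrt t_*p\boxplus_n\sqrt{1-t}_*\check H_n$, in which all three polynomials equal $\check H_n$ at $t=0$.
\item You omit the ``if'' direction for entropy power. It follows from the semigroup law and $\cN_n(H_{n,\tau})=\tau\,\cN_n(H_{n,1})$, and should be stated.
\end{itemize}
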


Positive scaling here means dilation of the roots: for \(c>0\), we write
\(c_*f(x)=c^nf(x/c)\).  The scales of the two equality inputs need not
agree.  The weighted Fisher inequality used in the entropy proof does
force equal variance (\Cref{cor:weighted-fisher-equality}) and that condition applies after the inputs have been rescaled separately to unit entropy power. Undoing those two normalizations permits independent scales in the final statement.

In degree two, the classification is automatic, since every simple
monic quadratic is \((x-m)^2-d^2=d^2\He_2((x-m)/d)\) for some
\(m\in\R\) and \(d>0\).  If entropy power is extended by zero to repeated-root polynomials, \(x^n\boxplus_n g=g\) gives boundary equalities for arbitrary \(g\). The entropy-power assertion settles the uniqueness part of Gribinski's conjecture~\cite[Conjecture~1]{Gribinski19} on the
finite-information locus, while the inequality itself was proved
in~\cite{GVS26}.

Convolution also contracts distances between input configurations,
strictly so unless the fixed factor is a point mass.  For increasingly
ordered root vectors, let
\(d_2(\rho,\sigma)=\|\rho^\circ-\sigma^\circ\|_2\).
For their centered empirical measures, this is \(\sqrt n\) times the
quadratic Wasserstein distance.  We prove that, for every fixed real-rooted
\(q\) with at least two distinct roots and any two simple inputs
\(\alpha,\widetilde\alpha\) distinct modulo translation,
\begin{equation}\label{eq:intro-strict-W2}
 d_2\bigl(\Omega(\alpha,\beta),
          \Omega(\widetilde\alpha,\beta)\bigr)
       <d_2(\alpha,\widetilde\alpha),\qquad q=P_\beta.
\end{equation}
The assumption on \(q\) is sharp, meaning that a point mass only translates the roots (\Cref{thm:strict-root-transport}).
For simple \(q\), strictness follows from the split classification.
To allow repeated roots in \(q\), we show instead that every entry of
the one-sided root Jacobian is strictly positive
(\Cref{thm:positive-one-sided-jacobian}).  Every output score is then an
average using every input score with positive weight, so strict convexity
gives strict contraction of every \(\ell^p\) score norm for
\(1<p<\infty\).  Thus the Fisher monotonicity of~\cite{GVS26}
extends to these score norms, with equality precisely when \(q\) is a
point mass (\Cref{thm:strict-score-monotonicity}).

Complete splitting is only one way for a finite free curve to degenerate.
We now turn to partial collisions.  When several output roots meet, in
which directions do they separate?

Multiplicity cannot answer that. Fujie's theorem determines how many
output roots meet, but it does not specify their separation velocities.
We compute this first-order profile.  In a local affine parameter
\(\varepsilon\) vanishing at a collision, let \(U,V\) be the monic
polynomials whose roots are the velocities within the two maximal colliding input blocks, of sizes \(m,k\).  Suppose \(r=m+k-n>0\).
After subtracting the collision location and dividing by \(\varepsilon\),
the \(r\) output roots converge, as a multiset, to the roots of
\begin{equation}\label{eq:intro-tangent-convolution}
 Q=
 \left(\frac{r!}{m!}U^{(m-r)}\right)
 \boxplus_r
 \left(\frac{r!}{k!}V^{(k-r)}\right),
 \qquad r=m+k-n>0.
\end{equation}

Both normalized derivatives have degree \(r\). Thus the first-order
separation of the colliding output roots is governed by a
degree-\(r\) finite free convolution of the differentiated input
velocity polynomials. \Cref{thm:tangent-convolution} proves this
formula for repeated base configurations and arbitrary affine
velocities, and gives the exact leading coefficient of the
rescaled polynomial. If \(Q\) is simple, its roots are the distinct
tangent slopes. This simplicity holds automatically for the curves
through a simple base pair considered below. Combined with the incidence argument, the tangent formula determines the real singularities and the covering of the normalized curve, as well as the leading singular terms of Fisher
information and entropy.

We now fix simple base inputs \(\alpha,\beta\) and return to the
curves \(C_w\).  For \(S\subset\{1,\ldots,2n\}\), let
\(\R^S=\operatorname{span}\{e_j:j\in S\}\), where the \(e_j\)
are the standard coordinate vectors of \(\R^{2n}\).  For
\(2\leq r\leq n\), write \(\Sigma_r\) for the directions \(w\)
for which some real projective fiber of \(h_w\) has a root of
multiplicity at least \(r\).  This multiplicity condition gives no
obvious reason for a linear description of \(\Sigma_r\).  Yet it is
a finite union of linear subspaces, all of the same dimension, which
the next theorem identifies exactly.  Its proof is in
\Cref{sec:collision-geometry}.

\begin{theorem}[Real singularities of finite free curves]
\label{thm:intro-real-geometry}
Fix \(n\geq2\) and simple real root vectors \(\alpha,\beta\).
Every real singularity of \(C_w\) is an ordinary multiple point with
distinct real tangent lines, and all real singularities lie in at most
one fiber of \([x:y:t]\mapsto[y:t]\).  Moreover,
\begin{equation}\label{eq:intro-collision-arrangement}
 \Sigma_r=
 \bigcup_{\substack{S\subset\{1,\ldots,2n\}\\|S|=n-r}}
 \bigl(L_{\alpha,\beta}+\R^S\bigr),
 \qquad 2\leq r\leq n.
\end{equation}
These are \(\binom{2n}{n-r}\) distinct linear subspaces of dimension
\(n-r+3\).  In particular, \(\Sigma_2\) is the locus of directions
giving a real singularity, and \(\Sigma_n=L_{\alpha,\beta}\).
\end{theorem}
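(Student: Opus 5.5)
The plan is to work fiber by fiber over the parameter line $[y:t]$, in three steps. First, we locate the possible singular fibers using Fujie's multiplicity theorem together with the incidence count. Second, we apply the tangent convolution formula \eqref{eq:intro-tangent-convolution} (\Cref{thm:tangent-convolution}) at such a fiber to obtain the local branch structure. Third, we read off $\Sigma_r$ from the collision data.

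\textbf{Step 1: at most one singular fiber.} In the affine chart $y=1$, the input roots are $\alpha_i+tu_i$ and $\beta_j+tv_j$, which are $2n$ affine lines in the $(t,x)$-plane. Since $\alpha,\beta$ are simple, no two lines of the same input meet at $t=0$. Hence any two lines of the same input coincide nowhere or meet in exactly one time. At the fiber $y=0$ the inputs are $P_u,P_v$, so a collision there means $u$ or $v$ has repeated coordinates.

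An output root of multiplicity $r\ge2$ at a fiber requires input clusters of sizes $m,k$ with $m+k-n\ge r$ (Fujie). Collapsing a cluster of size $m$ at one time uses $\binom m2$ coincident pairs. Each pair of input lines can collide at only one time, projectively including $t=\infty$ via the $u$-coordinates. Suppose two distinct fibers both carried output collisions. Then cluster sizes with $m+k\ge n+2$ occur at two fibers. This is the incidence argument already sketched for Theorem~1.2, and I would redo it here without the splitting hypothesis.

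If $m_1+m_2\ge n+2$ with clusters of $\alpha$-lines at two different times, two lines would be shared whenever $m_1+m_2>n+1$. Two shared lines meeting at two distinct times is impossible. The same holds for $\beta$. Summing over both inputs, $(m_1+k_1)+(m_2+k_2)\ge 2n+4$ forces sharing in at least one input, giving the contradiction. I expect this count to need care for lines that are parallel or coincide identically. Coincident lines are excluded by simplicity at $t=0$.

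\textbf{Step 2: ordinary real multiple points.} At the unique singular fiber, take a point of multiplicity $r$. The tangent cone is given by the polynomial $Q$ of \eqref{eq:intro-tangent-convolution}, built from the velocity polynomials $U,V$ of the colliding blocks. A singular point of $C_w$ in a real fiber must have $r\ge2$ real local branches, by hyperbolicity and real-rootedness in $x$. The branches are ordinary with distinct real tangents exactly when $Q$ has simple real roots.

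Simplicity of $Q$ comes from the following argument. Because the base $\alpha$ is simple and the colliding lines pass through one point at a nonzero time, their slopes $u_i$ are pairwise distinct. So $U$ and $V$ have simple real roots. Derivatives of simple real-rooted polynomials are simple and real-rooted by Rolle. Finite free convolution of simple real-rooted polynomials is simple, by the positivity of the one-sided Jacobian or Fujie with $r=1$. The fiber at infinity is handled by a projective change of coordinates exchanging the roles of $(\alpha,\beta)$ and $(u,v)$.

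\textbf{Step 3: the arrangement \eqref{eq:intro-collision-arrangement}.} Multiplicity $\ge r$ at a fiber $[y_0:t_0]$ requires clusters with $m+k\ge n+r$. Translating by $a\one$, $b\one$ and adding a dilation component, we may place the collision at a finite time $t_0$ with cluster points $p,q$.

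A cluster of size $m$ in the $\alpha$-input means $u_i=a+c\alpha_i^\circ$ for $m$ indices, with $c=-1/t_0$. This holds for all indices outside a set of size $n-m$, and similarly for $\beta$ outside a set of size $n-k$. The exceptional set $S$ then has size $2n-m-k\le n-r$. Conversely, any $w\in L_{\alpha,\beta}+\R^S$ with $|S|=n-r$ gives clusters of total size $n+r$, hence output multiplicity $\ge r$ by Fujie.

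The case of collisions at $t=\infty$ corresponds to $c=0$ and repeated $u$-coordinates. It lies in the closure of the same subspaces, so the union is unchanged. Each subspace has dimension $3+(n-r)$, because $\alpha^\circ\oplus\beta^\circ$ has all coordinates nonzero except possibly at one centered zero. The subspaces are pairwise distinct for distinct $S$, since for $n\ge2$ no coordinate vector lies in $L_{\alpha,\beta}+\R^{S'}$ unless $j\in S'$. This last point needs a short check using simplicity of $\alpha$ and $\beta$.

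For $r=n$ we get $S=\emptyset$, so $\Sigma_n=L_{\alpha,\beta}$, consistent with \Cref{thm:intro-split-locus}. The main obstacle is Step 1's incidence count in full generality, non-split and including infinity.
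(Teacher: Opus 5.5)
Your proposal is correct and follows the paper's route. Step 1 is the incidence count of \Cref{lem:unique-real-collision-fiber}, Step 2 is the tangent convolution with Rolle and Fujie giving a simple tangent polynomial, and Step 3 is the block description of $\Sigma_r$ together with a support bound for the dimension and distinctness claims. The differences are minor: the paper works in the chart $t=1$, $Y=y+c$, where $\alpha,\beta$ themselves become the velocities, so simplicity of $Q$ and the fiber $y=0$ are handled uniformly, whereas you obtain distinct velocities $u_i=(p-\alpha_i)/t_0$ from the common intersection point and swap roles at infinity. For the dimension count you should apply the support bound to every nonzero element $(a\one+c\alpha^\circ)\oplus(b\one+c\beta^\circ)$ of $L_{\alpha,\beta}$, not only to $\alpha^\circ\oplus\beta^\circ$.
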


An ordinary point of multiplicity \(r\) has \(r\) smooth branches
with pairwise distinct tangents, so the theorem rules out real
cusps and tangential intersections. For a simple base pair,
the polynomial \(Q\) in \eqref{eq:intro-tangent-convolution}
has \(r\) distinct real roots, and these roots give the slopes
of the branches.

The arrangement comes from the incidence count.  It places all real
root collisions in one fiber and permits at most \(n-r\) input coordinates
to escape a common collapse at that projective parameter.  For \(r=n\),
none escape, which is why \(\Sigma_n=L_{\alpha,\beta}\). Each step down in multiplicity permits one more exceptional input root. The restriction to real singularities here is important as \Cref{ex:reducible-real-smooth-finite-free-curve} gives a reducible curve with no real singular point.

Once the branches are separated by normalization, they can be followed
through a collision. Within each colliding group their order reverses
because the local parameter changes sign, while the projective
identification reverses the whole root list.  Let \((r_1,\ldots,r_{\kappa})\) be the multiplicities in the exceptional
fiber, including singletons, listed in the increasing order of the roots
themselves.  The covering is given by a simple rule. Reverse each consecutive block of these lengths, then reverse all
\(n\) positions.

The cycle lengths of the resulting permutation are the covering degrees
of the connected components of the real normalization over the real
projective parameter line (\Cref{thm:real-monodromy}).  If there is no real collision, the first reversal is the identity.  The order of the multiplicities is important here and not merely the multiset meaning that two collisions with the same multiplicities in different positions give different coverings. In degree four, \((2,1,1)\) gives a single component covering four times, while \((1,2,1)\) gives three components, of degrees \(2,1,1\). The real-fiberedness and absence of real ramification also follow from Kummer--Shamovich's general results, discussed in \Cref{sec:intro-discussion}. The finite free structure adds the explicit permutation.

Every ordered composition occurs through every fixed simple base pair
(\Cref{cor:collision-composition-realization}).  For \(n\geq3\), a fiber
with exactly two distinct output roots of coprime multiplicities gives
a permutation with a single cycle, forcing absolute irreducibility
(\Cref{cor:two-block-collision-irreducibility}).

The same branch expansion counts discriminant zeros.  A block of size
\(r_j\) contributes order \(r_j(r_j-1)\), because each of its root
gaps vanishes to first order.  The total discriminant degree is
\(n(n-1)\), but a non-split curve can account for at most
\((n-1)(n-2)\) of that degree at its real collision fiber.
Thus, for \(n\geq3\), every non-split curve has at least \(2n-2\)
non-real projective zeros of its \(x\)-discriminant, counted with
multiplicity.  The bound is attained exactly on
\(\Sigma_{n-1}\setminus L_{\alpha,\beta}\)
(\Cref{cor:non-real-discriminant-gap}), these curves are rational, with
one ordinary \((n-1)\)-fold point as their only complex singularity.
In this extremal case the remaining discriminant zeros therefore record
only non-real ramification.  Their total multiplicity \(2n-2\) is
exactly the ramification degree given by Riemann--Hurwitz for the
degree-\(n\) projection \(\mathbf P^1\to\mathbf P^1\).

The tangent polynomial also determines how information diverges.
The shrinking gaps make Fisher information diverge through their
reciprocal squares, while their logarithms govern the entropy divergence.
If precisely one output cluster of size \(r\geq2\) collides and its tangent polynomial \(Q\) is simple, the output polynomial \(R_\varepsilon\) at local parameter
\(\varepsilon\) satisfies
\begin{align}\label{eq:intro-collision-information}
 \lim_{\varepsilon\to0}\varepsilon^2\Phi_n(R_\varepsilon)
 &=\frac{r(r-1)^2}{n(n-1)^2}\Phi_r(Q),\notag\\
 \chi_n[R_\varepsilon]
 &=\frac{r(r-1)}{n(n-1)}\log|\varepsilon|+O(1).
\end{align}
The Fisher coefficient remembers the separation velocities through
\(Q\), whereas the logarithmic entropy coefficient counts colliding
pairs: \(\binom r2\) out of the \(\binom n2\) output-root pairs.
For a double collision in degree three, it is \(1/3\).
\Cref{cor:collision-information} gives the exact entropy constant and
adds the contributions of all clusters when several collide.

This brings us back to Stam equality.  Let \(\vartheta(w)\) be the
proportion of output-root pairs that intersect in the exceptional fiber.
Equivalently, it is the total real projective zero order of
\(\operatorname{Disc}_x h_w\), divided by \(n(n-1)\), set
\(\vartheta(w)=0\) when there is no real collision.  In a local
affine chart at the collision, it is the coefficient of
\(\log|\varepsilon|\) in the entropy.  This includes a collision
at infinity in the original time chart.

Every input pair admits a common dilation, so complete collapse along
an arbitrary direction cannot distinguish Hermite inputs.  We must use
the direction selected by score transport.  For \(n\geq3\),
\Cref{cor:score-entropy-equality} gives
\begin{equation}\label{eq:intro-entropy-equality-criterion}
 \vartheta(w_*)=1
 \quad\Longleftrightarrow\quad
 \text{equality in Stam for }P_\alpha,P_\beta,
 \qquad
 w_*=I_\alpha^{-1}s(\alpha)\oplus I_\beta^{-1}s(\beta).
\end{equation}
Every other simple input pair has \(\vartheta(w_*)\leq(n-2)/n\).

Thus an equality condition measured at the simple base fiber is visible
as an entropy singularity elsewhere on the same curve.  The matrix defect
sees it at \(t=0\), the tangent convolution sees it at the collision.
They are two readings of one condition.

\subsection{Discussion}
\label{sec:intro-discussion}
Leake and Ryder used the hyperbolicity of the universal convolution
polynomial to derive root inequalities and majorization
results~\cite[Section~1.1]{LeakeRyder18}. Here, we provide a new bridge between real algebraic geometry to finite free information at two different stages. A definite pencil expresses the Jacobian defect as a squared off-diagonal norm. Then, the incidence argument classifies the input motions for which this norm vanishes. At partial collisions, the tangent convolution
determines the branch slopes and establishes the ordinary
singularity type. Together, these arguments describe the splitting
and real singularity loci directly in the input velocities.

Our analytic starting point is the work of Garza--Vargas, Srivastava,
and Stier~\cite{GVS26}. They proved both information inequalities,
and we use their score-transport identity and de Bruijn formula.
\Cref{cor:matrix-stam} answers their request for a matrix-analytic
proof of Stam, for all real-rooted inputs. Their Remark~5.4
identifies the common radial direction as a unit singular direction
of the convolution Jacobian. For simple inputs,
\Cref{cor:intro-jacobian-rigidity} shows that this is the only
norm-preserving direction after the two translations are removed.
This supplies the missing step from sharpness at Hermite inputs
to the classification of equality.

The closest multiplicity result is Fujie's atom theorem \cite[Theorem~1.1 and Propositions~3.6--3.8]{Fujie26}.  It identifies the
locations and multiplicities of repeated output roots.  Earlier work of
Kostov and Shapiro on Schur--Szeg\H{o} composition also studies
multiplicities and qualitative splitting under perturbation
\cite[Proposition~1.4 and Theorem~1.6]{KostovShapiro06}.  Our additive
tangent formula \eqref{eq:intro-tangent-convolution} gives the
separation velocities themselves.  It can therefore be used whenever
a degeneration is known through its input root velocities, without
first solving for the nearby output roots.  This provides an explicit
local answer to the root-spacing question that opens \cite{GVS26}.

The projection of the normalized curve belongs to the theory of
real-fibered morphisms.  Its real-fiberedness and its lack of real
ramification also follow from Kummer and Shamovich
\cite[Corollary~2.16 and Theorem~2.19]{KummerShamovich20}.
What the finite free structure provides is the ordered block reversal
that computes the covering, together with an explicit realization of
every such type through every simple base pair.  On each irreducible
component, the distinguished projection gives a degree vector in the
separating semigroup of Kummer and Shaw~\cite{KummerShaw20}.
Determining the other separating morphisms is a further question about
that curve. The non-real discriminant bound follows from the incidence and local branch calculations. In the equality case, the genus formula proves rationality, while the incidence classification describes the extremal locus directly in the input velocities. The relation between real-fibered curves and definite determinantal representations also appears in the algebraic constructions of
Hanselka and Kummer \cite[Corollaries~10.3--10.4]{HanselkaKummer24}.

The appendices examine the regular geometry near equality and the
algebraic meaning of the classified loci.  The first author's investigation of \(p\)-Stam inequalities conjectured the Hermite coupling spectrum from numerical evidence~\cite[Conjecture~4.1]{Hashemi26PStam}.
\Cref{thm:exact-spectrum} proves this conjecture and extends it to
unequal Hermite scales.  We identify the coupling modes with the classical
root spectrum~\cite{Sasaki15} and finite free cumulant evolution~\cite{AP18}, and obtain a local estimate in Euclidean root distance with the exact dyadic rate. 

After the preliminaries, \Cref{sec:matrix-defect} proves the matrix
defect and the split classification.  Their applications to equality
and strict transport occupy
\Cref{sec:information-equality,sec:strict-transport}.
The local analysis begins in \Cref{sec:collision-local} with the
tangent convolution and information formulas,
\Cref{sec:collision-geometry} obtains the real singularity arrangement,
covering monodromy, and discriminant extremizers, and ends with the
entropy criterion for Stam equality.  The appendices contain the
Hermite spectrum and dynamics, followed by the geometric interpretations.

%\section*{Acknowledgements}The first author thanks Jorge Garza-Vargas for questions and comments thatmotivated the equality and spectral problems, and Gergely Berczi and Johannes Schmitt for discussions of the computational methodology.

\section{Finite free preliminaries}\label{sec:preliminaries}

We work with monic real-rooted polynomials of a fixed degree \(n\).
Roots are listed increasingly whenever they are used as coordinates.  If
\begin{equation}\label{eq:coefficients}
 p(x)=\sum_{k=0}^n(-1)^ka_kx^{n-k},\qquad
 q(x)=\sum_{k=0}^n(-1)^kb_kx^{n-k},
\end{equation}
where \(a_0=b_0=1\), then
\begin{equation}\label{eq:convolution-coefficients}
 (p\boxplus_nq)(x)=\sum_{k=0}^n(-1)^kc_kx^{n-k},
 \qquad
 c_k=\sum_{i+j=k}
 \frac{(n-i)!(n-j)!}{n!(n-k)!}a_ib_j.
\end{equation}
Walsh's theorem implies that \(p\boxplus_nq\) is real-rooted whenever both
inputs are real-rooted \cite{Walsh22}.  Marcus, Spielman, and Srivastava made
this convolution a basic operation of finite free probability and developed
its structural properties \cite{MSS22}.  It is commutative and associative,
commutes with translations, and adds means and variances.  We use the root variance
\begin{equation}\label{eq:variance}
 \Var(p)=\frac1n\sum_{i=1}^n(\alpha_i-\bar\alpha)^2,
 \qquad \bar\alpha=\frac1n\sum_i\alpha_i.
\end{equation}

For a root vector \(\alpha\), put
\(P_\alpha(x)=\prod_{i=1}^n(x-\alpha_i)\).  We use the increasingly ordered vector of roots whenever the inputs are real.
When discussing affine trajectories through collisions, equalities of root
families will instead mean equalities of unordered multisets.  We write
\begin{equation}\label{eq:root-map}
 \Omega(\alpha,\beta)=\operatorname{roots}(P_\alpha\boxplus_nP_\beta)
\end{equation}
for the ordered root map.  Wherever the output is simple, the
implicit-function theorem makes its roots analytic functions of the
input coefficients.  Those coefficients are polynomial in the input
root coordinates, so the full Jacobian \(J=D\Omega(\alpha,\beta)\)
exists even when an input has repeated coordinates.  Here the input
coordinates range over an open subset of \(\R^{2n}\) and only the output
roots need to be ordered to choose their analytic labels. The following permutation formula also appears in \cite[Theorem~1.1, equation~(5)]{GorinMarcus20}. We include a coefficient proof because the matching interpretation will be used in the analysis of collisions.

\begin{lemma}[Permutation formula for finite free convolution]
\label{lem:permutation-formula}
For \(\alpha,\beta\in\R^n\),
\begin{equation}\label{eq:hyperbolic-F}
 \mathcal F_n(x,\alpha,\beta)
 :=\frac1{n!}\sum_{\pi\in S_n}
 \prod_{r=1}^n(x-\alpha_r-\beta_{\pi(r)})
 =P_\alpha\boxplus_nP_\beta.
\end{equation}
\end{lemma}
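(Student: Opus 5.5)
We compare coefficients of \(x^{n-k}\) on both sides. Expanding the product \(\prod_{r=1}^n(x-\alpha_r-\beta_{\pi(r)})\), the coefficient of \((-1)^kx^{n-k}\) is the elementary symmetric function \(e_k\) of the \(n\) numbers \(\alpha_r+\beta_{\pi(r)}\). We further expand each factor \(\alpha_r+\beta_{\pi(r)}\) by choosing, for every row \(r\) in a \(k\)-subset \(T\subset\{1,\ldots,n\}\), either the \(\alpha\)-term or the \(\beta\)-term. This writes the coefficient as a sum over disjoint pairs \(I\sqcup K=T\) with \(|I|=i\), \(|K|=j\), \(i+j=k\), of \(\prod_{r\in I}\alpha_r\prod_{r\in K}\beta_{\pi(r)}\).

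Next we average over \(\pi\in S_n\) with \(I\) and \(K\) held fixed. The image \(\pi(K)\) is then uniformly distributed among the \(j\)-subsets of \(\{1,\ldots,n\}\), so the average of \(\prod_{r\in K}\beta_{\pi(r)}\) equals \(b_j/\binom nj\), where \(b_j=e_j(\beta)\). For fixed \(I\) with \(|I|=i\), the number of admissible \(K\) disjoint from \(I\) with \(|K|=j\) is \(\binom{n-i}{j}\). Summing \(\prod_{r\in I}\alpha_r\) over all \(I\) gives \(a_i=e_i(\alpha)\). The averaged coefficient is therefore
\[
 \sum_{i+j=k}\binom{n-i}{j}\binom nj^{-1}a_ib_j.
\]

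The only real step is the bookkeeping check that this weight equals the weight in \eqref{eq:convolution-coefficients}:
\[
 \binom{n-i}{j}\binom nj^{-1}
 =\frac{(n-i)!}{j!(n-k)!}\cdot\frac{j!(n-j)!}{n!}
 =\frac{(n-i)!(n-j)!}{n!(n-k)!}.
\]
The main care needed is to keep track of the ordered choice (\(\alpha\)-part versus \(\beta\)-part) without double counting. Fixing \(I\) and \(K\) as labelled disjoint sets before averaging handles this. The resulting expression is the coefficient \(c_k\), which proves \eqref{eq:hyperbolic-F}.

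Finally, we record for the later collision analysis that the averaging step is exactly an average over matchings \(\pi\).
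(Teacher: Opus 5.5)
Your proof is correct and follows the paper's route. You compare the coefficient of \(x^{n-k}\), identify the \(S_n\)-average of \(e_k(\alpha+\pi\beta)\) as \(\sum_{i+j=k}e_i(\alpha)e_j(\beta)\binom{n-i}{j}/\binom nj\), and verify the same binomial identity. The paper states that averaged identity without derivation; your counting with \((I,K)\) held fixed, and \(\pi(K)\) uniform among \(j\)-subsets, supplies exactly the justification it leaves implicit.
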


\begin{proof}
The signed coefficient of \(x^{n-k}\) on the left is
\[
 \frac1{n!}\sum_{\pi\in S_n}e_k(\alpha+\pi\beta)
 =\sum_{i+j=k}e_i(\alpha)e_j(\beta)
   \frac{\binom{n-i}{j}}{\binom nj}.
\]
Since
\[
 \frac{\binom{n-i}{j}}{\binom nj}
 =\frac{(n-i)!(n-j)!}{n!(n-k)!},
\]
this is exactly the coefficient \(c_k\) in
\eqref{eq:convolution-coefficients}.
\end{proof}

Fujie's atom theorem will be used in the following precise form
\cite[Propositions~3.6 and~3.8]{Fujie26}.  If roots \(a\) and \(b\) of
the two inputs have multiplicities \(m\) and \(k\), and
\(m+k\geq n\), then \(a+b\) is an output root of multiplicity
\(m+k-n\), with multiplicity zero meaning that it is absent.  Every output
root which is not of this form is simple.  It follows at once that
convolution with a simple-rooted factor has only simple roots: a repeated
output root cannot be nontrivial, while a trivial one would require
\(m+1-n\geq2\), impossible because \(m\leq n\).  Thus either simple
input suffices for the regularity of the root map just described.

Define the separately centered subspace
\begin{equation}\label{eq:centered-subspace}
 \mathcal V=\{u\oplus v:\langle u,\one\rangle=0,
 \ \langle v,\one\rangle=0\}.
\end{equation}
For two simple inputs, the score-transport identity
\cite[Lemma~3.2]{GVS26} gives, for all \(a,b\in\R\),
\begin{equation}\label{eq:score-transport}
 J\bigl(a s(\alpha)\oplus b s(\beta)\bigr)
 =(a+b)s(\gamma),\qquad \gamma=\Omega(\alpha,\beta).
\end{equation}
The score sums to zero, so the input vector on the left belongs to
\(\mathcal V\).  Thus a norm estimate for the Jacobian becomes an information inequality
when applied to score vectors.  The next section proves such an estimate
and determines all its equality directions.

The heat polynomials in \eqref{eq:intro-heat-hermite} satisfy
\begin{equation}\label{eq:hermite-semigroup}
 H_{n,\tau}\boxplus_nH_{n,\sigma}=H_{n,\tau+\sigma},
 \qquad \Var(H_{n,\tau})=(n-1)\tau.
\end{equation}
At a root \(h_i\) of \(H_{n,\tau}\), the Hermite differential equation
gives
\begin{equation}\label{eq:hermite-score}
 s(h)_i=\frac{H_{n,\tau}''(h_i)}{2H_{n,\tau}'(h_i)}
 =\frac{h_i}{2\tau}.
\end{equation}

\section{Finite free curves, the matrix defect, and the split locus}
\label{sec:matrix-defect}

We first prove the defect identity and illustrate it in degree two.  The
classification of its zero directions then reduces to a lemma about affine
root trajectories.  The only external input to that lemma is the collision
multiplicity theorem recalled in the preceding section. We use the polynomial \(\mathcal F_n\) from \Cref{lem:permutation-formula}.  As a polynomial in \(x,\zeta,\eta\), it is homogeneous and hyperbolic in the \(x\)-direction and its \(x\)-roots are \(\Omega(\zeta,\eta)\)~\cite[Proposition~1.6]{LeakeRyder18}.

\begin{proof}[Proof of \Cref{thm:intro-matrix-defect}]
Fix \(w=u\oplus v\) and restrict \(\mathcal F_n\) to the ternary
homogeneous polynomial
\begin{equation}\label{eq:ternary-restriction}
 h(x,y,t)=\mathcal F_n(x,y\alpha+tu,y\beta+tv).
\end{equation}
It is hyperbolic in the \(x\)-direction and \(h(1,0,0)=1\).  The
Helton--Vinnikov theorem, equivalently the Lax conjecture in three
variables~\cite{HeltonVinnikov07,LewisParriloRamana05}, gives real
symmetric matrices \(A,B\) such that
\begin{equation}\label{eq:determinantal-pencil}
 h(x,y,t)=\det(xI-yA-tB).
\end{equation}
The eigenvalues of \(A\) are the distinct numbers \(\gamma_i\).  Work in an
orthonormal eigenbasis of \(A\).  First-order perturbation of a simple
eigenvalue says that the velocity of the \(i\)-th root of
\(\det(xI-A-tB)\) is \(B_{ii}\).  The root map computes the same velocity
as \((Jw)_i\), and therefore
\begin{equation}\label{eq:diagonal-is-jacobian}
 \operatorname{diag}_A(B)=Jw.
\end{equation}

We have identified the diagonal part of \(B\). To determine the energy in its off-diagonal entries, we compute its full Frobenius norm from mean and variance additivity. For every real \(t\),
\begin{equation}\label{eq:second-moment-identity}
 \sum_i\gamma_i(t)^2
 =\lVert\alpha+tu\rVert_2^2+\lVert\beta+tv\rVert_2^2
   +2n\,\overline{\alpha+tu}\,\overline{\beta+tv}.
\end{equation}
The left side is \(\operatorname{tr}(A+tB)^2\).  Taking one half of the
second derivative at zero yields
\begin{equation}\label{eq:pencil-frobenius-norm}
 \lVert B\rVert_{\mathrm F}^2
 =\lVert u\rVert_2^2+\lVert v\rVert_2^2+2n\bar u\bar v.
\end{equation}
The diagonal and off-diagonal parts of a symmetric matrix are orthogonal.
Together with \eqref{eq:diagonal-is-jacobian}, this gives
\begin{align}\label{eq:sum-of-squares}
 \lVert\operatorname{off}_A(B)\rVert_{\mathrm F}^2
 &=\lVert B\rVert_{\mathrm F}^2
   -\lVert\operatorname{diag}_A(B)\rVert_2^2\notag\\
 &=\lVert w\rVert_2^2-\lVert Jw\rVert_2^2+2n\bar u\bar v.
\end{align}
This proves \eqref{eq:intro-matrix-defect} for arbitrary \(w\), not merely
for separately centered directions.  Its left side depends only on the root
map, so the off-diagonal energy is independent of the chosen normalized
definite representation.

If the defect vanishes, then \(A\) and \(B\) commute.  They can be
simultaneously diagonalized, and
\begin{equation}\label{eq:split-pencil}
 h(x,y,t)=\prod_{i=1}^n(x-y\gamma_i-tB_{ii}).
\end{equation}
Conversely, because \(h\) is monic of degree \(n\) in \(x\), a complete
real linear factorization has the form
\(h(x,y,t)=\prod_i(x-y\gamma_i-tz_i)\), after matching the distinct roots
at \(t=0\).  Hence \(Jw=z\).  Comparing the coefficient of \(t^2\) in
\(\operatorname{tr}(A+tB)^2=\sum_i(\gamma_i+tz_i)^2\) gives
\(\lVert B\rVert_{\mathrm F}^2=\lVert z\rVert_2^2\).  By
\eqref{eq:diagonal-is-jacobian}, the diagonal part of \(B\) already has
this norm, so its off-diagonal part vanishes.  This proves the final
assertion.
\end{proof}

In degree two the pencil can be written down explicitly.  The calculation
also shows why the equality condition compares the relative velocities of
the two inputs.

\begin{example}[The quadratic pencil]\label{ex:quadratic-pencil}
Let \(\alpha=(-a,a)\), \(\beta=(-b,b)\),
\(u=(-r,r)\), and \(v=(-s,s)\), where \(a,b>0\).  Put
\(c=(a^2+b^2)^{1/2}\), \(d=ar+bs\), and \(e=(as-br)/c\).  Along this
direction the convolution polynomial is
\begin{equation}\label{eq:quadratic-convolution-path}
 h(x,y,t)=x^2-(ay+rt)^2-(by+st)^2.
\end{equation}
It has the representation \(h(x,y,t)=\det(xI-yA-tB)\), where
\begin{equation}\label{eq:quadratic-pencil-matrices}
 A=\begin{pmatrix}-c&0\\0&c\end{pmatrix},\qquad
 B=\begin{pmatrix}-d/c&e\\e&d/c\end{pmatrix}.
\end{equation}
Indeed,
\(d^2/c^2+e^2=r^2+s^2\).  The output roots at \(t=0\) are \((-c,c)\),
and direct differentiation gives
\begin{equation}\label{eq:quadratic-defect}
 \lVert w\rVert_2^2-\lVert Jw\rVert_2^2
 =\frac{2(as-br)^2}{a^2+b^2}
 =2e^2
 =\lVert\operatorname{off}_A(B)\rVert_{\mathrm F}^2.
\end{equation}
The defect vanishes exactly when \(as=br\).  In that case
\((r,s)\) is proportional to \((a,b)\): both centered inputs are dilated by
the same factor, and the conic in \eqref{eq:quadratic-convolution-path}
splits into two lines.
\end{example}

\begin{corollary}[Matrix-analytic finite free Stam]
\label{cor:matrix-stam}
Let \(\alpha,\beta\in\R^n\), and write \(\gamma=\Omega(\alpha,\beta)\).
Whenever the output \(P_\alpha\boxplus_nP_\beta\) is simple,
\(\lVert Jw\rVert_2\leq\lVert w\rVert_2\) for every
\(w\in\mathcal V\).  If both inputs are simple, then for all
\(a,b\in\R\),
\begin{equation}\label{eq:weighted-score-contraction}
 (a+b)^2\lVert s(\gamma)\rVert_2^2
 \leq a^2\lVert s(\alpha)\rVert_2^2
 +b^2\lVert s(\beta)\rVert_2^2.
\end{equation}
The finite free Stam inequality \eqref{eq:intro-stam} holds for all
monic real-rooted inputs of degree \(n\), with reciprocal Fisher
information defined to be zero at repeated-root polynomials.
\end{corollary}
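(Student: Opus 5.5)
The plan has three parts, taken in the order of the statement: the contraction, the weighted score inequality, and finally Stam for all real-rooted inputs.

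\emph{Contraction.} For \(w=u\oplus v\in\mathcal V\) we have \(\bar u=\bar v=0\), so the cross term \(2n\bar u\bar v\) in \eqref{eq:intro-matrix-defect} vanishes. \Cref{thm:intro-matrix-defect} then gives
\[\|w\|_2^2-\|Jw\|_2^2=\|\operatorname{off}_{A_w}(B_w)\|_{\mathrm F}^2\ge0.\]
This uses only the simplicity of the output, which is the hypothesis of that theorem.

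\emph{Weighted score inequality.} When both inputs are simple, the output is simple by Fujie's theorem, as recalled in \Cref{sec:preliminaries}. Apply the contraction to
\[w=a\,s(\alpha)\oplus b\,s(\beta).\]
This vector lies in \(\mathcal V\) because each score sums to zero, and its two summands are orthogonal in the direct sum. The score-transport identity \eqref{eq:score-transport} gives \(Jw=(a+b)s(\gamma)\). Squaring the contraction inequality yields \eqref{eq:weighted-score-contraction} directly.

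\emph{Stam for simple inputs.} Write \(I_\alpha=\|s(\alpha)\|^2\) and similarly for \(\beta,\gamma\); all are positive for \(n\ge2\) and simple roots. Choose \(a=I_\alpha^{-1}\) and \(b=I_\beta^{-1}\). Then the right side of \eqref{eq:weighted-score-contraction} equals \(a+b\), so
\[(a+b)^2I_\gamma\le a+b,\qquad\text{hence}\qquad I_\gamma^{-1}\ge I_\alpha^{-1}+I_\beta^{-1}.\]
Since \(\Phi_n\) is the same fixed multiple \(4/(n(n-1)^2)\) of \(I\) for every polynomial, this is \eqref{eq:intro-stam}.

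\emph{Boundary cases.} Suppose some input has a repeated root. If both do, the right side of \eqref{eq:intro-stam} is \(0\) and the left side is \(\ge0\), so there is nothing to prove. The main obstacle is the case of exactly one repeated input, say \(f\) with \(g\) simple. Here the right side is \(1/\Phi_n(g)\), and we need \(\Phi_n(f\boxplus_n g)\le\Phi_n(g)\); the output is simple by Fujie's theorem.

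I would handle this by approximation. Take simple \(f_k\to f\), for instance by perturbing the repeated roots. The roots of \(f_k\boxplus_n g\) converge to those of the simple polynomial \(f\boxplus_n g\), so \(\Phi_n(f_k\boxplus_n g)\to\Phi_n(f\boxplus_n g)\). Meanwhile \(1/\Phi_n(f_k)\to0\) because some root gap of \(f_k\) tends to zero and \(\Phi_n(f_k)\to\infty\). Passing to the limit in the simple-input inequality gives the claim.

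Alternatively, one can avoid scores of \(f\) altogether, since they need not be defined. Use only the contraction together with the one-sided transport \(J(0\oplus s(\beta))=s(\gamma)\), which holds whenever the output is simple. Then \(\|s(\gamma)\|\le\|s(\beta)\|\) gives the same bound directly. The approximation route is safer given the stated form of \eqref{eq:score-transport}, so I would present it as the main argument and mention the one-sided transport only as a remark.
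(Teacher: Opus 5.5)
Your proof is correct. Through the simple-input Stam inequality it matches the paper's: the cross term vanishes on \(\mathcal V\), you test on \(a\,s(\alpha)\oplus b\,s(\beta)\) with score transport \eqref{eq:score-transport}, and you choose the weights \(a=I_\alpha^{-1}\), \(b=I_\beta^{-1}\).

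The two routes differ only in the mixed boundary case.

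\emph{Your route.} You pass to the limit in the already-proved Stam inequality for \(f_k\boxplus_n g\). This needs only continuity of \(\Phi_n\) at the simple limiting output. You do not even need \(1/\Phi_n(f_k)\to0\): drop that nonnegative term before taking the limit, and you already get \(1/\Phi_n(f\boxplus_n g)\geq 1/\Phi_n(g)\).

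\emph{The paper's route.} This is essentially the one in your closing remark. The paper approximates the repeated input by simple ones. Because the limiting output is simple by Fujie's theorem, the full Jacobian and the output score converge. Passing to the limit gives the one-sided identity \(J(s(\alpha)\oplus0)=s(\gamma)\) at the repeated configuration itself. The boundary inequality is then read off from the matrix contraction at that configuration.

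\emph{What each buys.} Your argument is more elementary. The paper's keeps the boundary case matrix-analytic. It also establishes one-sided score transport with a repeated-root factor, which is cited again in the proof of \Cref{thm:strict-score-monotonicity}. If you use the approximation argument instead, that identity must be proved separately where it is needed.
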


\begin{proof}
The first assertion is immediate from
\eqref{eq:intro-matrix-defect}.  Since each score has coordinate sum zero,
apply it to \(w=a s(\alpha)\oplus b s(\beta)\) and use
\eqref{eq:score-transport}.  This proves
\eqref{eq:weighted-score-contraction}.  For simple inputs, set \(a=\lVert s(\alpha)\rVert_2^{-2}\) and \(b=\lVert s(\beta)\rVert_2^{-2}\), and the normalizing constants in
\eqref{eq:intro-fisher} cancel, giving \eqref{eq:intro-stam}.

Suppose next that \(\alpha\) is simple and \(\beta\) has repeated
coordinates.  Approximate \(\beta\) by simple real root vectors.
Fujie's theorem makes the limiting output simple, so its score and
the full Jacobian converge.  Passing to the limit in
\eqref{eq:score-transport} with \(a=1\), \(b=0\) gives
\(J(s(\alpha)\oplus0)=s(\gamma)\).  The matrix contraction yields
\(\Phi_n(P_\alpha\boxplus_nP_\beta)\leq\Phi_n(P_\alpha)\),
which is Stam because \(1/\Phi_n(P_\beta)=0\).
Commutativity treats the other mixed case.  If neither input is simple,
both terms on the right of Stam vanish and the inequality follows
from non-negativity of reciprocal Fisher information.
\end{proof}

This gives the directional matrix proof suggested in
\cite[\S5, footnote~2]{GVS26}.  It attaches a definite pencil to every
directional plane curve and extracts from that pencil a scalar independent of the chosen representation.  That directional construction is exactly what the equality problem needs.

To understand when the sum of squares vanishes, we turn from matrices to the incidence pattern of the resulting root lines.  Use the atom alternative stated after
\Cref{lem:permutation-formula}.  An output root of multiplicity
\(r\geq2\) must be trivial.  Thus it is the sum of input roots whose
multiplicities \(m,k\) satisfy, in particular,
\begin{equation}\label{eq:fujie-multiplicity}
 m+k\geq n+r.
\end{equation}
Fujie's exact statement assigns the corresponding trivial root multiplicity \(m+k-n\). The weak form \eqref{eq:fujie-multiplicity} is all the line count needs. The incidence count applies before we assume that the output motions are affine. We record this stronger form because it will also control the real singularities of non-split curves.

\begin{lemma}[Uniqueness of the real collision parameter]
\label{lem:unique-real-collision-fiber}
Let \(\alpha,\beta\in\R^n\) both have simple coordinates.
For every \(w\in\R^{2n}\), there is at most one
\([y:t]\in\mathbf P^1(\R)\) for which \(h_w(x,y,t)\), viewed as a
polynomial in \(x\), has a repeated root.
\end{lemma}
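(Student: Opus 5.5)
The plan is to reduce a repeated output root to many coincidences among the $2n$ input lines, and then to show that two distinct parameters would need more such coincidences than affine lines allow. Fix $w=u\oplus v$. For $[y:t]\in\mathbf P^1(\R)$, the fiber polynomial $h_w(\cdot,y,t)$ equals $P_{y\alpha+tu}\boxplus_nP_{y\beta+tv}$ by \Cref{lem:permutation-formula}. Its inputs have real coordinates $y\alpha_i+tu_i$ and $y\beta_j+tv_j$. Each coordinate is a linear form in $(y,t)$, so every input root traces a line through the origin in the $(y,t)$-plane.

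Suppose the fiber at $[y_0:t_0]$ has a repeated root. By the atom alternative and \eqref{eq:fujie-multiplicity} with $r=2$, some input root of the first configuration has multiplicity $m$ and some root of the second has multiplicity $k$, with $m+k\geq n+2$. Thus there are index sets $I,K\subset\{1,\dots,n\}$ with $|I|+|K|\geq n+2$ such that the forms $y\alpha_i+tu_i$, $i\in I$, all agree at $(y_0,t_0)$, and likewise the forms $y\beta_j+tv_j$, $j\in K$.

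Now suppose a second projective parameter $[y_1:t_1]\neq[y_0:t_0]$ also has a repeated root, with index sets $I',K'$ satisfying $|I'|+|K'|\geq n+2$. Then
\[
|I\cap I'|+|K\cap K'|\geq 2(n+2)-2n=4,
\]
so one of the intersections, say $I\cap I'$, contains two distinct indices $i\neq i'$. The linear form $\ell(y,t)=y(\alpha_i-\alpha_{i'})+t(u_i-u_{i'})$ vanishes at both $(y_0,t_0)$ and $(y_1,t_1)$. These two vectors are linearly independent, so $\ell\equiv0$, which forces $\alpha_i=\alpha_{i'}$ and contradicts the simplicity of $\alpha$. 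The case where $K\cap K'$ contains two indices is handled the same way and uses the simplicity of $\beta$.

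The main point needing care is the projective bookkeeping. The fiber at $[0:1]$, the point at infinity in the time chart, has inputs $u$ and $v$, which may be repeated or even identically constant. Fujie's theorem still applies there, since it only needs real-rooted inputs and both fibers are genuine convolutions. The coincidence conditions are homogeneous, so they are well defined on $\mathbf P^1(\R)$. If a fiber is degenerate, for instance $y\alpha+tu$ entirely constant, it simply gives $|I|=n$, and the counting argument is unaffected. I also need to check that a repeated root of $h_w(\cdot,y,t)$ as a polynomial in $x$ is exactly a repeated output root of the convolution. This holds because the polynomial is monic of degree $n$ in $x$ for every $(y,t)$. Finally, the statement only asserts uniqueness, so no existence argument is needed.
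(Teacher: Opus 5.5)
Your proposal is correct and follows the paper's proof. At each of two collision parameters you take Fujie's bound $m+k\geq n+2$, and you note that two labels common to the same input block at both parameters would force the nonzero form $y(\alpha_i-\alpha_{i'})+t(u_i-u_{i'})$ to vanish at two independent points. The only cosmetic difference is that you obtain $|I\cap I'|+|K\cap K'|\geq 4$ directly, whereas the paper bounds each intersection by one and adds $m_1+m_2\leq n+1$ and $k_1+k_2\leq n+1$ to reach $2n+4\leq 2n+2$.
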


\begin{proof}
Suppose there were collisions at two distinct parameters.  Fujie's
multiplicity theorem~\cite{Fujie26} supplies first-input blocks \(I_1,I_2\) and
second-input blocks \(J_1,J_2\), of sizes \(m_1,m_2\) and
\(k_1,k_2\), such that \(m_\ell+k_\ell\geq n+2\) for
\(\ell=1,2\).

If two distinct labels belonged to both \(I_1\) and \(I_2\), the
nonzero linear form \(y(\alpha_i-\alpha_j)+t(u_i-u_j)\) would vanish at two distinct
projective parameters.  This is impossible. it is nonzero because \(\alpha_i\ne\alpha_j\).  Hence \(|I_1\cap I_2|\leq1\) and \(m_1+m_2\leq n+1\).  The same argument for the second input gives
\(k_1+k_2\leq n+1\).  Adding these bounds yields \begin{equation}\label{eq:collision-contradiction}
 2n+4\leq(m_1+k_1)+(m_2+k_2)\leq2n+2,
\end{equation}
a contradiction.  The argument takes place on the projective parameter
line and includes a possible collision at infinity.
\end{proof}

\begin{lemma}[Rigidity of an affine convolution trajectory]
\label{lem:trajectory-rigidity}
Let \(\alpha,\beta,\gamma\) be simple root vectors and let
\(u,v,z\in\R^n\).  Suppose
\begin{equation}\label{eq:abstract-affine-trajectory}
 \Omega(\alpha+tu,\beta+tv)=\gamma+tz\qquad(t\in\R)
\end{equation}
as unordered root multisets.  If the coordinates of \(z\) are not all
equal, then there is \(t_0\ne0\) at which each of the three affine root
families collapses to a single point.
\end{lemma}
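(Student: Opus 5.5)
Since the coordinates of \(z\) are not all equal, two of the output lines \(t\mapsto\gamma_i+tz_i\) have different slopes, so they meet at exactly one real time. We will show that all the output lines pass through one common point at one time \(t_0\ne0\), and then use variance additivity to collapse the inputs.

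\textbf{Step 1: a collision time.} Pick \(i,j\) with \(z_i\ne z_j\). The lines \(\gamma_i+tz_i\) and \(\gamma_j+tz_j\) meet at \(t_0=-(\gamma_i-\gamma_j)/(z_i-z_j)\). Here \(t_0\ne0\) because \(\gamma\) is simple. At \(t_0\) the output \(\Omega(\alpha+t_0u,\beta+t_0v)\) has a repeated root.

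\textbf{Step 2: all output collisions happen at \(t_0\).} The inputs \(\alpha,\beta\) are simple, so \Cref{lem:unique-real-collision-fiber} applies to \(w=u\oplus v\) (in the chart \(y=1\)). Hence \(t_0\) is the only real time at which the output has a repeated root. Now take any pair \(k,l\) with \(z_k\ne z_l\). Its two lines meet at some real time, which must be \(t_0\).

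\textbf{Step 3: every output line passes through one point.} Group the indices by slope into at least two classes. Lines in the same class are parallel and, since \(\gamma\) is simple, never meet. Every pair from different classes meets at \(t_0\), so at \(t_0\) each class takes a single value \(\gamma_k+t_0z_k\). Then two lines in the same class would coincide at \(t_0\) and hence everywhere, contradicting simplicity of \(\gamma\). So each class has exactly one line. Any two lines from different classes then coincide at \(t_0\), and with at least two classes this makes the value \(\gamma_k+t_0z_k\) the same for all \(k\). Thus the output at \(t_0\) collapses to a single point \(c\).

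\textbf{Step 4: the inputs collapse.} The output at \(t_0\) is \((x-c)^n\), so its variance is zero. Convolution adds variances, giving \(\Var(\alpha+t_0u)+\Var(\beta+t_0v)=0\). Both terms are nonnegative, so both vanish and each input family collapses to a point at \(t_0\).

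\textbf{Main obstacle.} The delicate part is the combinatorial Step 3, which relies on the whole-curve uniqueness of \Cref{lem:unique-real-collision-fiber} rather than on local information. One also has to be careful that the unordered-multiset identity correctly identifies output collisions with coincidences of the lines.
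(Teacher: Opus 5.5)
Your proposal is correct and follows essentially the same route as the paper's proof. Both use \Cref{lem:unique-real-collision-fiber} to place every output meeting at a single time \(t_0\ne0\), then a slope-counting argument to show that all output lines pass through one point, and finally variance additivity to collapse both inputs. Your slope-class grouping is just a rephrasing of the paper's ``third line at another value'' contradiction.
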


\begin{proof}
Two output lines intersect because their slopes are not all equal.
\Cref{lem:unique-real-collision-fiber} shows that every output
meeting occurs at one time \(t_0\), which is nonzero because the
output is simple at \(t=0\).

All output lines intersect at the same value at \(t_0\).  Otherwise take two
lines in one collision cluster and a third line at another value.  The third line must have the slope of each of the first two and if one slope differed, that pair would intersect at a time distinct from \(t_0\).  The first two slopes are distinct, a contradiction.  The output multiplicity at \(t_0\) is therefore \(n\), so its root variance is zero. Variance additivity gives
\[
 0=\Var(P_{\alpha+t_0u}\boxplus_nP_{\beta+t_0v})
 =\Var(P_{\alpha+t_0u})+\Var(P_{\beta+t_0v}).
\]
Both summands are nonnegative.  They vanish separately, and a real root
vector has zero variance precisely when all its coordinates agree. Both
input families therefore collapse at \(t_0\).
\end{proof}

\begin{figure}[!htbp]
\centering
\begin{tikzpicture}[x=1.15cm,y=0.72cm]
  \draw[->] (-0.2,0) -- (5.2,0) node[right] {time \(t\)};
  \draw[->] (0,-0.2) -- (0,4.8) node[above] {root position};
  \draw[densely dashed] (3,0) -- (3,4.4);
  \node[below] at (3,0) {\(t_0\)};
  \draw[thick] (0,0.7) -- (4.6,3.77);
  \draw[thick] (0,1.8) -- (4.6,3.18);
  \draw[thick] (0,3.8) -- (4.6,2.11);
  \fill (3,2.7) circle (2pt);
  \node[above] at (3.25,4.15) {all outputs meet};
  \draw[->,thin] (3.25,4.08) -- (3.04,2.83);
  \fill (0,0.7) circle (1.5pt);
  \fill (0,1.8) circle (1.5pt);
  \fill (0,3.8) circle (1.5pt);
\end{tikzpicture}
\caption{The affine output-root lines in
\Cref{lem:trajectory-rigidity}.  The multiplicity count rules out two
different meeting times.  Once all output lines intersect at \(t_0\), variance
additivity forces both input root families to collapse there as well.}
\label{fig:affine-root-lines}
\end{figure}

\begin{proof}[Proof of \Cref{thm:intro-split-locus}]
Write \(a=\bar u\), \(b=\bar v\),
\(u^\circ=u-a\one\), and \(v^\circ=v-b\one\).  Translation covariance of
finite free convolution gives
\begin{equation}\label{eq:translation-reduction-split}
 h_w(x,y,t)=h_{u^\circ\oplus v^\circ}
              (x-t(a+b),y,t).
\end{equation}
This invertible linear change preserves complete splitting, so it is enough
to classify \(w\in\mathcal V\).

Suppose first that \(h_w\) splits.  By
\Cref{thm:intro-matrix-defect}, the Jacobian defect vanishes, and the
factorization can be written
\begin{equation}\label{eq:global-affine-roots}
 h_w(x,y,t)=\prod_{i=1}^n(x-y\gamma_i-tz_i).
\end{equation}
Hence \(\Omega(\alpha+tu,\beta+tv)=\gamma+tz\) as an unordered multiset
for every real \(t\).  If all \(z_i\) are equal, their sum is zero because
\(w\in\mathcal V\), so \(z=0\).  The zero-defect identity then gives
\(\lVert w\rVert_2^2=\lVert Jw\rVert_2^2=0\).  Otherwise
\Cref{lem:trajectory-rigidity} gives a common collapse time \(t_0\ne0\).
The input means are constant, and therefore
\begin{equation}\label{eq:radial-split-direction}
 u=-t_0^{-1}\alpha^\circ,
 \qquad v=-t_0^{-1}\beta^\circ.
\end{equation}
Together with \eqref{eq:translation-reduction-split}, this proves the
necessity in the classification.  Conversely, if
\eqref{eq:intro-split-directions} holds, translation and dilation covariance
give
\begin{equation}\label{eq:affine-output-radial-input}
 \Omega(\alpha+tu,\beta+tv)
 =\bar\gamma\one+t(a+b)\one+(1+ct)\gamma^\circ.
\end{equation}
This identity holds as an unordered root multiset when \(1+ct<0\),
as well as before the collision.  Homogenizing it gives the explicit factorization
\begin{equation}\label{eq:radial-split-factorization}
 h_w(x,y,t)=\prod_{i=1}^n
 \bigl(x-y\gamma_i-t(a+b+c\gamma_i^\circ)\bigr),
\end{equation}
so \(h_w\) splits into \(n\) real lines.
\end{proof}

\begin{proof}[Proof of \Cref{cor:intro-jacobian-rigidity}]
Differentiate
\eqref{eq:affine-output-radial-input} with \(a=b=0\) and \(c=1\):
\begin{equation}\label{eq:radial-jacobian-vector}
 J(\alpha^\circ\oplus\beta^\circ)=\gamma^\circ.
\end{equation}
Variance additivity says
\begin{equation}\label{eq:radial-norm-additivity}
 \lVert\gamma^\circ\rVert_2^2
 =\lVert\alpha^\circ\rVert_2^2
  +\lVert\beta^\circ\rVert_2^2.
\end{equation}
Thus \(1\) is a singular value of \(J|_{\mathcal V}\).  If any other
\(w\in\mathcal V\) has \(\lVert Jw\rVert_2=\lVert w\rVert_2\), then
\Cref{thm:intro-matrix-defect} makes \(h_w\) split, and \Cref{thm:intro-split-locus}
puts \(w\) in
\(\operatorname{span}\{\alpha^\circ\oplus\beta^\circ\}\).  This proves
simplicity.
\end{proof}

The classification applies to every tangent direction.  We now use it for
weighted score vectors, where simultaneous dilation becomes the radial
condition characterizing Hermite roots.

\section{Equality in finite free information inequalities}
\label{sec:information-equality}

The split-locus theorem was proved for arbitrary tangent directions.  Score vectors form one particular family of directions, and the content of this section is that no further geometry is needed for them. Two configurations are extremal exactly when their scores are radial, and by \eqref{eq:hermite-score} radial scores mean Hermite roots. The remaining questions of which weights, which normalizations, and how to cross from Fisher information to entropy do fix the relation between the two Hermite scales.

\subsection{Weighted scores and Hermite roots}

The score of a simple root configuration sums to zero, so a weighted pair
of scores lies in the separately centered tangent space.  Equality in its
norm contraction is therefore covered by the split-locus theorem.  The
weights must be kept in the calculation: they determine how the scales of
the two Hermite inputs are related.

\begin{proposition}[Equality in weighted score contraction]
\label{prop:weighted-score-equality}
Let \(a,b>0\), and let \(\alpha,\beta\) have simple coordinates.  Equality
holds in \eqref{eq:weighted-score-contraction} if and only if, up to
independent translations,
\begin{equation}\label{eq:weighted-hermite-parameters}
 P_\alpha=H_{n,\tau_\alpha},\qquad
 P_\beta=H_{n,\tau_\beta},\qquad
 \frac{\tau_\alpha}{a}=\frac{\tau_\beta}{b}.
\end{equation}
\end{proposition}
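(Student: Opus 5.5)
Equality in \eqref{eq:weighted-score-contraction} means norm preservation for the direction \(w=a s(\alpha)\oplus b s(\beta)\in\mathcal V\). By the score-transport identity \eqref{eq:score-transport}, \(Jw=(a+b)s(\gamma)\), so equality in \eqref{eq:weighted-score-contraction} reads \(\lVert Jw\rVert_2=\lVert w\rVert_2\). Since \(w\) has centered components, \(\bar u=\bar v=0\), and \Cref{thm:intro-matrix-defect} shows that the off-diagonal defect vanishes. The form \(h_w\) then splits. By \Cref{cor:intro-jacobian-rigidity}, equivalently \Cref{thm:intro-split-locus} restricted to \(\mathcal V\), there is \(c\in\R\) with
\[
 a\,s(\alpha)=c\,\alpha^\circ,\qquad b\,s(\beta)=c\,\beta^\circ .
\]
Since \(\langle s(\alpha),\alpha\rangle=\sum_{i<j}1=\binom n2>0\), pairing the first identity with \(\alpha^\circ\) gives \(c\lVert\alpha^\circ\rVert^2=a\binom n2>0\), so \(c>0\).

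Next we identify the radial-score configurations. Suppose \(s(\alpha)=\lambda\alpha^\circ\) with \(\lambda=c/a>0\). Translate so that \(\bar\alpha=0\), and put \(P=P_\alpha\). Then \(s(\alpha)_i=P''(\alpha_i)/(2P'(\alpha_i))\), so the polynomial \(P''-2\lambda xP'\) vanishes at every root of \(P\). It has degree \(n\) with leading coefficient \(-2\lambda n\), and hence \(P''-2\lambda xP'=-2\lambda nP\). Set \(\tau=1/(2\lambda)\) and compare with the equation satisfied by \(H_{n,\tau}\), namely \(\tau y''-xy'+ny=0\), which follows by scaling \(\He_n''-x\He_n'+n\He_n=0\). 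The monic polynomial solution of degree \(n\) is unique: the coefficient recursion determines every lower coefficient from the leading one, because the indicial factor \(n-k\) is nonzero for \(k<n\). Therefore \(P=H_{n,\tau}\) with \(\tau_\alpha=a/(2c)\), and likewise \(\tau_\beta=b/(2c)\). This gives \(\tau_\alpha/a=\tau_\beta/b=1/(2c)\).

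For the converse, take \(P_\alpha=H_{n,\tau_\alpha}\) and \(P_\beta=H_{n,\tau_\beta}\) with \(\tau_\alpha/a=\tau_\beta/b=\kappa\), after translation. By \eqref{eq:hermite-score}, \(a\,s(\alpha)=\alpha^\circ/(2\kappa)\) and \(b\,s(\beta)=\beta^\circ/(2\kappa)\). So \(w\) is a multiple of \(\alpha^\circ\oplus\beta^\circ\), and \eqref{eq:radial-jacobian-vector} together with \eqref{eq:radial-norm-additivity} gives \(\lVert Jw\rVert=\lVert w\rVert\). Equivalently, \(\gamma\) is the translate of \(H_{n,\tau_\alpha+\tau_\beta}\) by \eqref{eq:hermite-semigroup}, and the identity can be checked directly from \eqref{eq:hermite-score}.

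All of the main difficulty has already been absorbed into \Cref{cor:intro-jacobian-rigidity}. The only care needed here is to establish \(c>0\), so that the scale \(\tau\) is positive and the differential equation is the Hermite equation rather than its sign-reversed version, and to keep the weights \(a,b\) in the calculation so that they fix the ratio of the two scales.
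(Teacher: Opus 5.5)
Your proposal is correct and follows the paper's proof essentially step by step: score transport reduces equality to norm preservation of \(w=a\,s(\alpha)\oplus b\,s(\beta)\), the rigidity corollary makes both weighted scores a common multiple \(c\) of the centered root vectors, the radial pairing \(\binom n2\) forces \(c>0\), and the resulting polynomial identity is the rescaled Hermite equation, giving \(\tau_\alpha=a/(2c)\) and \(\tau_\beta=b/(2c)\). The differences are only cosmetic. You compare leading coefficients of \(P''-2\lambda xP'\) and prove uniqueness of the monic solution by the coefficient recursion, whereas the paper observes that \(P''-2c_\alpha xP'+2c_\alpha nP\) has degree at most \(n-1\) and then substitutes \(y=\sqrt{2c_\alpha}\,x\).
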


\begin{proof}
Assume equality.  With
\(w=a s(\alpha)\oplus b s(\beta)\),
\eqref{eq:score-transport} shows that equality in
\eqref{eq:weighted-score-contraction} is
\(\lVert Jw\rVert_2=\lVert w\rVert_2\).
Hence \Cref{cor:intro-jacobian-rigidity} gives a scalar \(\rho\) such that
\begin{equation}\label{eq:radial-scores}
 a s(\alpha)=\rho\alpha^\circ,
 \qquad b s(\beta)=\rho\beta^\circ.
\end{equation}
For every simple root vector \(r\), pairwise symmetrization gives
\begin{equation}\label{eq:radial-pairing}
 \langle r^\circ,s(r)\rangle
 =\sum_{i<j}\frac{r_i-r_j}{r_i-r_j}
 =\binom n2.
\end{equation}
Taking the inner product of \eqref{eq:radial-scores} with either centered
input and using \eqref{eq:radial-pairing} shows \(\rho>0\).  Thus
\(c_\alpha=\rho/a\) and \(c_\beta=\rho/b\) are positive radial-score
constants.

Translate \(\alpha\) to mean zero and set
\(P(x)=\prod_i(x-\alpha_i)\).  At every root,
\begin{equation}\label{eq:score-log-derivative}
 \frac{P''(\alpha_i)}{2P'(\alpha_i)}=c_\alpha\alpha_i.
\end{equation}
The polynomial \(P''-2c_\alpha xP'+2c_\alpha nP\) has degree at most
\(n-1\) and vanishes at all \(n\) roots, so it is zero.  After
\(y=\sqrt{2c_\alpha}\,x\), this is the probabilists' Hermite equation.
Thus \(P=H_{n,\tau_\alpha}\) with
\(\tau_\alpha=(2c_\alpha)^{-1}=a/(2\rho)\), and similarly
\(\tau_\beta=b/(2\rho)\).  This proves \eqref{eq:weighted-hermite-parameters}.

Conversely, suppose \(\tau_\alpha/a=\tau_\beta/b\), and put
\(\rho=a/(2\tau_\alpha)=b/(2\tau_\beta)\).  By
\eqref{eq:hermite-score},
\(a s(\alpha)\oplus b s(\beta)
=\rho(\alpha^\circ\oplus\beta^\circ)\).
Equations \eqref{eq:radial-jacobian-vector} and
\eqref{eq:radial-norm-additivity} show that \(J\) preserves the norm of this
vector.  Together with \eqref{eq:score-transport}, this is equality in
\eqref{eq:weighted-score-contraction}.
\end{proof}

\begin{proof}[Proof of Stam equality in \Cref{thm:information-equality}]
Put
\(I_\alpha=\lVert s(\alpha)\rVert_2^2\),
\(I_\beta=\lVert s(\beta)\rVert_2^2\), and choose
\(a=I_\alpha^{-1}\), \(b=I_\beta^{-1}\) in
\eqref{eq:weighted-score-contraction}.  Equality in Stam is exactly
equality in that contraction.  \Cref{prop:weighted-score-equality} makes
both inputs independently scaled and translated Hermite polynomials.
Conversely, if their heat parameters are \(\tau,\sigma>0\), then
\(H_{n,\tau}\boxplus_nH_{n,\sigma}=H_{n,\tau+\sigma}\), and
\eqref{eq:hermite-score} shows that reciprocal Fisher informations add.
Translations do not change the score.
\end{proof}

For the entropy argument we will need the following weighted form.  If
\(0<\lambda<1\), applying score contraction to the scaled inputs gives
\begin{equation}\label{eq:intro-weighted-fisher}
 \Phi_n\!\left(\sqrt\lambda_*f\boxplus_n
                 \sqrt{1-\lambda}_*g\right)
 \leq\lambda\Phi_n(f)+(1-\lambda)\Phi_n(g).
\end{equation}
The unscaled inputs must have the same variance for equality to hold.

\begin{corollary}[Weighted Fisher equality]
\label{cor:weighted-fisher-equality}
Let \(0<\lambda<1\).  Equality in \eqref{eq:intro-weighted-fisher} holds
for simple-rooted \(p,q\) if and only if they are independently translated
Hermite polynomials with the same variance.
\end{corollary}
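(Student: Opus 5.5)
We reduce the weighted Fisher inequality \eqref{eq:intro-weighted-fisher} to the weighted score contraction \eqref{eq:weighted-score-contraction} applied to the scaled inputs $\tilde f=\sqrt\lambda_*p$ and $\tilde g=\sqrt{1-\lambda}_*q$. Then \Cref{prop:weighted-score-equality} reads off the equality cases.

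\textbf{Scaling rules.} For $c>0$ the roots of $c_*p$ are $c\alpha$, so $s(c\alpha)=c^{-1}s(\alpha)$ and $\Phi_n(c_*p)=c^{-2}\Phi_n(p)$. Also $\Var(c_*p)=c^2\Var(p)$, and $c_*H_{n,\tau}=H_{n,c^2\tau}$.

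\textbf{Reduction.} Write $\tilde\alpha=\sqrt\lambda\,\alpha$ and $\tilde\beta=\sqrt{1-\lambda}\,\beta$, and let $\gamma$ be the output roots. Apply \eqref{eq:weighted-score-contraction} to $(\tilde\alpha,\tilde\beta)$ with weights $a=\lambda$, $b=1-\lambda$, so that $a+b=1$. This gives
\[
\|s(\gamma)\|_2^2\le\lambda^2\|s(\tilde\alpha)\|_2^2+(1-\lambda)^2\|s(\tilde\beta)\|_2^2=\lambda\|s(\alpha)\|_2^2+(1-\lambda)\|s(\beta)\|_2^2 .
\]
Multiplying by $4/(n(n-1)^2)$ yields exactly \eqref{eq:intro-weighted-fisher}. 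Hence equality in \eqref{eq:intro-weighted-fisher} is the same as equality in \eqref{eq:weighted-score-contraction} for $(\tilde\alpha,\tilde\beta)$ with these weights. Both scaled inputs are still simple, so the proposition applies.

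\textbf{Equality cases.} By \Cref{prop:weighted-score-equality}, equality holds if and only if, up to translation, $\tilde f=H_{n,\tilde\tau_\alpha}$ and $\tilde g=H_{n,\tilde\tau_\beta}$ with $\tilde\tau_\alpha/\lambda=\tilde\tau_\beta/(1-\lambda)$. Undoing the dilations gives $p=H_{n,\tilde\tau_\alpha/\lambda}$ and $q=H_{n,\tilde\tau_\beta/(1-\lambda)}$ up to translation. The weight condition therefore says precisely that $p$ and $q$ have the same heat parameter $\tau$. By \eqref{eq:hermite-semigroup}, $\Var(H_{n,\tau})=(n-1)\tau$, so equal heat parameters is the same as equal variance. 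Translations change neither the variance nor the score.

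\textbf{Converse and main difficulty.} Suppose $p,q$ are translated $H_{n,\tau}$. Then the scaled inputs are $H_{n,\lambda\tau}$ and $H_{n,(1-\lambda)\tau}$, which satisfy the weight condition. The converse direction of the proposition then gives equality. The only delicate point is bookkeeping: one must track how the weights pass through the dilations so that the ratio condition becomes exactly equal variance.
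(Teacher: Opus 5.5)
Your proof is correct and follows the paper's argument. You apply the weighted score contraction to the dilated roots $\sqrt\lambda\,\alpha$, $\sqrt{1-\lambda}\,\beta$ with weights $a=\lambda$, $b=1-\lambda$, use degree $-1$ homogeneity of the score, and read off equality from \Cref{prop:weighted-score-equality}; its ratio condition becomes equality of the unscaled heat parameters, hence of the variances. The only cosmetic difference is that you get the converse from the converse half of that proposition, whereas the paper cites the Hermite semigroup law \eqref{eq:hermite-semigroup} directly.
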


\begin{proof}
Apply \eqref{eq:weighted-score-contraction} to the scaled root vectors
\(\sqrt\lambda\,\alpha\) and \(\sqrt{1-\lambda}\,\beta\), with
\(a=\lambda\), \(b=1-\lambda\).  Since score is homogeneous of degree
\(-1\), this is exactly \eqref{eq:intro-weighted-fisher}.  In the original
unscaled variables, \Cref{prop:weighted-score-equality} says that the two
heat parameters agree.  The converse follows from
\eqref{eq:hermite-semigroup}.
\end{proof}

\subsection{Entropy concavity and entropy power}\label{sec:entropy-power}

We use a heat-flow interpolation to pass from the weighted Fisher
inequality to entropy.  Keeping track of its derivative will also identify
equality, including the endpoint at which the original polynomials are
recovered. The endpoint is where the interpolation degenerates, and it is the only place at which equality can be lost without the derivative noticing.

The entropy and entropy power from \eqref{eq:intro-entropy} scale according to
\begin{equation}\label{eq:entropy-scaling}
 \chi_n[c_*p]=\chi_n[p]+\log c,
 \qquad \cN_n(c_*p)=c^2\cN_n(p)
 \qquad(c>0).
\end{equation}

Let \(\check H_n=H_{n,1/(n-1)}\), so that
\(\Var(\check H_n)=1\) and \(\Phi_n(\check H_n)=1\).  The finite free de
Bruijn identity of~\cite{GVS26} is
\begin{equation}\label{eq:debruijn}
 \frac{d}{dt}\chi_n[p\boxplus_n\sqrt t_*\check H_n]
 =\frac12\Phi_n(p\boxplus_n\sqrt t_*\check H_n).
\end{equation}
Together with the weighted Fisher inequality, it yields the entropy
concavity statement
\begin{equation}\label{eq:entropy-concavity}
 \chi_n[\sqrt\lambda_*p\boxplus_n\sqrt{1-\lambda}_*q]
 \geq\lambda\chi_n[p]+(1-\lambda)\chi_n[q],
 \qquad 0\leq\lambda\leq1.
\end{equation}
The inequality is proved in~\cite{GVS26}, following the classical
normal-perturbation interpolation of Dembo, Cover, and
Thomas~\cite{DCT91}.  We give the interpolation below because its
derivative identifies equality.

\begin{theorem}[Equality in entropy concavity]
\label{thm:entropy-concavity-equality}
Let \(0<\lambda<1\), and let \(p,q\) have simple real roots.  Equality
holds in \eqref{eq:entropy-concavity} if and only if \(p\) and \(q\) are
independently translated Hermite polynomials with the same variance.
\end{theorem}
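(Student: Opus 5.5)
We will follow the Dembo--Cover--Thomas heat-flow interpolation and keep track of its derivative. Sufficiency is quick. If \(p,q\) are translates of \(H_{n,\tau}\) with common \(\tau\), then \(\sqrt\lambda_*p\boxplus_n\sqrt{1-\lambda}_*q\) is a translate of \(H_{n,\lambda\tau+(1-\lambda)\tau}=H_{n,\tau}\), using \eqref{eq:hermite-semigroup} and the scaling \(c_*H_{n,\tau}=H_{n,c^2\tau}\). Both sides of \eqref{eq:entropy-concavity} then equal \(\chi_n[H_{n,\tau}]\), since translations do not change \(\chi_n\). For necessity, define for \(s\geq0\)
\[
 p_s=p\boxplus_n\sqrt s_*\check H_n,\qquad q_s=q\boxplus_n\sqrt s_*\check H_n,
\]
\[
 r_s=\sqrt\lambda_*p_s\boxplus_n\sqrt{1-\lambda}_*q_s
 =\bigl(\sqrt\lambda_*p\boxplus_n\sqrt{1-\lambda}_*q\bigr)\boxplus_n\sqrt s_*\check H_n .
\]
The last identity uses associativity, commutativity and \(H_{n,\lambda\sigma}\boxplus_nH_{n,(1-\lambda)\sigma}=H_{n,\sigma}\). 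Put
\[
 D(s)=\chi_n[r_s]-\lambda\chi_n[p_s]-(1-\lambda)\chi_n[q_s].
\]

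By \eqref{eq:debruijn}, \(D'(s)=\tfrac12\bigl(\Phi_n(r_s)-\lambda\Phi_n(p_s)-(1-\lambda)\Phi_n(q_s)\bigr)\), and this is \(\leq0\) by \eqref{eq:intro-weighted-fisher} applied to \(p_s,q_s\). All the polynomials involved are simple for \(s>0\), and also at \(s=0\) because \(p,q\) are simple and Fujie's theorem applies. Hence \(D\) is \(C^1\) on \([0,\infty)\) and nonincreasing.

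Next we show \(D(s)\to0\) as \(s\to\infty\). By \eqref{eq:entropy-scaling}, \(\chi_n[p_s]=\tfrac12\log s+\chi_n[(1/\sqrt s)_*p\boxplus_n\check H_n]\), and \((1/\sqrt s)_*p\to x^n\) as \(s\to\infty\). Continuity of \(\chi_n\) at the simple polynomial \(\check H_n\) then gives \(\chi_n[p_s]-\tfrac12\log s\to\chi_n[\check H_n]\). The same holds for \(q_s\) and \(r_s\), so the \(\tfrac12\log s\) terms cancel with coefficients \(1,\lambda,1-\lambda\) and \(D(s)\to0\). This recovers \(D(0)\geq0\), which is \eqref{eq:entropy-concavity}.

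Now suppose equality holds, that is, \(D(0)=0\). Since \(D\) is nonincreasing with limit \(0\), it vanishes identically, so \(D'(s)=0\) for every \(s\geq0\). At \(s=0\) this is equality in \eqref{eq:intro-weighted-fisher} for the simple polynomials \(p,q\) themselves. \Cref{cor:weighted-fisher-equality} then shows that \(p,q\) are independently translated Hermite polynomials with the same variance. This avoids any need to pull the classification back from \(s>0\) to \(s=0\). If one prefers to use only \(s>0\), an alternative is to note that \(p_s,q_s\) are then translated \(H_{n,\tau(s)}\) with common \(\tau(s)\), and let \(s\to0\) using continuity of roots.

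The main obstacle is justifying the asymptotic step \(D(s)\to0\) and the differentiability of \(D\) at \(s=0\). Both rest on continuity of \(\chi_n\) and \(\Phi_n\) at simple configurations, together with the scaling limit \((1/\sqrt s)_*p\boxplus_n\check H_n\to\check H_n\). These follow from continuity of roots in the coefficients, since convolution coefficients are polynomial and \(\check H_n\) has simple roots.
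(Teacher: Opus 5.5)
Your proof is correct and is essentially the paper's argument. Your defect \(D(s)\) is the paper's \(F(t)\) under the reparametrization \(s=(1-t)/t\): the dilation by \(\sqrt t\) cancels because the weights sum to one. So your limit \(D(s)\to0\) as \(s\to\infty\) plays the role of the paper's \(F(0)=0\) together with continuity at the Hermite endpoint, and evaluating \(D'(0)\) directly at the simple inputs replaces the paper's limit \(t\uparrow1\).
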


\begin{proof}
We interpolate from a common unit-variance Hermite polynomial at \(t=0\) to the prescribed inputs at \(t=1\). The entropy defect starts at zero, and its derivative will be controlled by the weighted Fisher inequality. For \(0\leq t\leq1\), define
\begin{equation}\label{eq:entropy-interpolation}
 p_t=\sqrt t_*p\boxplus_n\sqrt{1-t}_*\check H_n,
 \qquad
 q_t=\sqrt t_*q\boxplus_n\sqrt{1-t}_*\check H_n,
\end{equation}
and
\begin{equation}\label{eq:r-interpolation}
 r_t=\sqrt\lambda_*p_t\boxplus_n\sqrt{1-\lambda}_*q_t.
\end{equation}
The Hermite semigroup law implies
\(r_t=\sqrt t_*r_1\boxplus_n\sqrt{1-t}_*\check H_n\).  Set
\begin{equation}\label{eq:entropy-defect-path}
 F(t)=\chi_n[r_t]-\lambda\chi_n[p_t]-(1-\lambda)\chi_n[q_t].
\end{equation}
At \(t=0\), all three polynomials equal \(\check H_n\), so \(F(0)=0\).
For \(0\leq t<1\), each of \(p_t,q_t,r_t\) is a convolution with a
positive-scale Hermite polynomial, hence is simple by the consequence of
Fujie's atom theorem recorded after \Cref{lem:permutation-formula}.  At
\(t=1\), the same argument applies because the relevant inputs are simple.
Coefficient continuity and simplicity make
the ordered roots, scores, and \(F\) continuous on \([0,1]\), with \(F\)
differentiable on \((0,1)\).

For \(t>0\), factor \(\sqrt t\) from \eqref{eq:entropy-interpolation} and
apply \eqref{eq:entropy-scaling} and \eqref{eq:debruijn} with
\(\epsilon_t=(1-t)/t\).  Since
\(\epsilon_t'=-t^{-2}\) and Fisher information has scaling degree \(-2\),
one obtains
\begin{equation}\label{eq:entropy-path-derivative}
 F'(t)=\frac{1}{2t}\left(
 \lambda\Phi_n(p_t)+(1-\lambda)\Phi_n(q_t)-\Phi_n(r_t)
 \right)\geq0.
\end{equation}
The last inequality is \eqref{eq:intro-weighted-fisher}.  Thus
\eqref{eq:entropy-concavity} is \(F(1)\geq F(0)\).

If equality holds at \(t=1\), then the continuous nonnegative derivative
in \eqref{eq:entropy-path-derivative} vanishes for \(0<t<1\).  Letting
\(t\uparrow1\), and using continuity of the score on the simple-rooted
locus, gives equality in \eqref{eq:intro-weighted-fisher} for \(p,q\).
\Cref{cor:weighted-fisher-equality} gives the asserted Hermite form and
the common variance.  Conversely, equal-variance Hermite inputs make every
polynomial in \eqref{eq:entropy-interpolation} Hermite with the same
variance, so \(F(t)=0\) identically.
\end{proof}

\begin{theorem}[Finite free entropy-power equality]
\label{thm:epi-equality}
Let \(p,q\) be monic degree-\(n\) polynomials with simple real roots.  Then
\begin{equation}\label{eq:epi}
 \cN_n(p\boxplus_nq)\geq\cN_n(p)+\cN_n(q),
\end{equation}
and equality holds if and only if \(p\) and \(q\) are independently
translated and positively scaled Hermite polynomials.
\end{theorem}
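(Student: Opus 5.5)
The plan is to derive both the inequality and its equality cases from entropy concavity, \Cref{thm:entropy-concavity-equality}, by the classical normalization trick. First I separate the trivial directions. Translations change neither entropy power nor the form of the claim. Positive scalings act by \eqref{eq:entropy-scaling}, so \(\cN_n(c_*p)=c^2\cN_n(p)\). Since \(p,q\) are simple, \(\cN_n(p),\cN_n(q)>0\).

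Next I normalize the two inputs separately. Put \(c_p=\cN_n(p)^{-1/2}\) and \(c_q=\cN_n(q)^{-1/2}\), and set \(\tilde p=(c_p)_*p\) and \(\tilde q=(c_q)_*q\). Both then have unit entropy power, so \(\chi_n[\tilde p]=\chi_n[\tilde q]=0\). Choose
\[
\lambda=\frac{\cN_n(p)}{\cN_n(p)+\cN_n(q)}\in(0,1).
\]
With this choice, \(\sqrt\lambda_*\tilde p=\bigl(\cN_n(p)+\cN_n(q)\bigr)^{-1/2}{}_*p\), and similarly \(\sqrt{1-\lambda}_*\tilde q=\bigl(\cN_n(p)+\cN_n(q)\bigr)^{-1/2}{}_*q\). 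Dilation commutes with \(\boxplus_n\), because the permutation formula \Cref{lem:permutation-formula} is homogeneous. Hence
\[
\sqrt\lambda_*\tilde p\boxplus_n\sqrt{1-\lambda}_*\tilde q
=\bigl(\cN_n(p)+\cN_n(q)\bigr)^{-1/2}{}_*(p\boxplus_nq).
\]
Entropy concavity \eqref{eq:entropy-concavity} bounds the entropy of the left side below by \(0\). By \eqref{eq:entropy-scaling}, this says exactly
\[
\chi_n[p\boxplus_nq]\ge\tfrac12\log\bigl(\cN_n(p)+\cN_n(q)\bigr).
\]
Exponentiating \(2\chi_n\) gives \eqref{eq:epi}.

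For equality, every step above is an identity except the concavity inequality. Equality in \eqref{eq:epi} is therefore equivalent to equality in \eqref{eq:entropy-concavity} for \(\tilde p,\tilde q\) at this \(\lambda\in(0,1)\). Both \(\tilde p\) and \(\tilde q\) are simple, so \Cref{thm:entropy-concavity-equality} applies. It says equality holds if and only if \(\tilde p,\tilde q\) are translated Hermite polynomials of equal variance. Since the normalization constants were arbitrary positive numbers, undoing them gives that \(p,q\) are independently translated and positively scaled Hermite polynomials.

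The converse must be checked directly, because a general pair of Hermite polynomials need not have equal variance after normalization. Take \(p=H_{n,\tau}\) and \(q=H_{n,\sigma}\), up to translations, which are harmless. Use \(H_{n,\tau}=\sqrt{\tau}_*H_{n,1}\), which follows from \eqref{eq:intro-heat-hermite}. Then \(\cN_n(p)=\tau\,\cN_n(H_{n,1})\), and likewise for \(q\). The semigroup law \eqref{eq:hermite-semigroup} gives \(\cN_n(p\boxplus_nq)=(\tau+\sigma)\cN_n(H_{n,1})\), which is equality.

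The only delicate point is that \(\lambda\) must lie strictly in \((0,1)\) and that both normalized inputs must be simple, so that \Cref{thm:entropy-concavity-equality} genuinely applies. Both facts come directly from the simple-root hypothesis, since it makes each entropy power positive and dilation preserves simplicity.
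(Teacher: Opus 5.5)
Your proposal is correct and is essentially the paper's proof: normalize each input to unit entropy power, take $\lambda=\cN_n(p)/(\cN_n(p)+\cN_n(q))$, use dilation covariance to identify equality in \eqref{eq:epi} with equality in \eqref{eq:entropy-concavity}, invoke \Cref{thm:entropy-concavity-equality}, and check the converse by the semigroup law and quadratic scaling of $\cN_n$. You also rederive the inequality itself from concavity, where the paper cites~\cite{GVS26}. One harmless slip is your stated reason for checking the converse directly: normalizing $H_{n,\tau}$ and $H_{n,\sigma}$ to unit entropy power produces the same polynomial $\cN_n(H_{n,1})^{-1/2}{}_*H_{n,1}$, so the equal-variance condition does hold, and the converse could equally be read off from \Cref{thm:entropy-concavity-equality}.
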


\begin{proof}
The inequality is the finite free entropy power inequality of~\cite{GVS26}.
We prove equality by reducing it to the entropy-concavity case. Normalize the two inputs separately to have entropy power one.  Put
\(A=\cN_n(p)>0\), \(B=\cN_n(q)>0\),
\begin{equation}\label{eq:epi-normalization}
 P=A^{-1/2}_*p,
 \qquad Q=B^{-1/2}_*q,
 \qquad \lambda=\frac{A}{A+B}.
\end{equation}
Then \(\cN_n(P)=\cN_n(Q)=1\), and dilation covariance of finite
free convolution gives
\begin{equation}\label{eq:epi-to-concavity}
 (A+B)^{-1/2}_*(p\boxplus_nq)
 =\sqrt\lambda_*P\boxplus_n\sqrt{1-\lambda}_*Q.
\end{equation}
Consequently, equality in \eqref{eq:epi} is equivalent to equality in
\eqref{eq:entropy-concavity} for \(P,Q\).  By
\Cref{thm:entropy-concavity-equality}, both are equal-variance translated
Hermite polynomials.  Undoing the two independent normalizations in
\eqref{eq:epi-normalization} gives independently scaled Hermite inputs. Conversely, for translated copies of \(H_{n,\tau}\) and \(H_{n,\sigma}\), the semigroup law gives \(H_{n,\tau+\sigma}\).  Since
entropy power has scaling degree two, \(\cN_n(H_{n,\tau})=\tau\cN_n(H_{n,1})\), and equality follows.
\end{proof}

This proves the entropy-power assertion of \Cref{thm:information-equality}. The common-variance condition belongs to the weighted interpolation and the independent normalization of the two entropy powers is what permits arbitrary positive Hermite scales in the final equality statement.

\section{Strict contraction under finite free convolution}
\label{sec:strict-transport}

While convolution with a fixed real-rooted factor preserves majorization of root vectors~\cite{BorceaBranden10,LeakeRyder18}, we ask the question whether holding one input fixed, does every output root respond when a single root of the other input moves? The split-locus theorem identifies the separately centered motions that preserve the joint Euclidean norm when both inputs have simple roots. For the motion of one input alone, Fujie's multiplicity theorem gives a stronger conclusion that also allows repeated roots in the fixed factor. The essential condition on that factor is that its roots are not all equal. We call a polynomial of the form \((x-b)^n\) a \emph{point mass}. 
The dichotomy is complete and there is nothing between its two halves, maening that a point mass translates every root rigidly and mixes nothing, while every other real-rooted factor of degree \(n\) mixes every input velocity into every output velocity, with a strictly positive weight on each. No hypothesis on the multiplicities of the fixed factor is needed for this, which is what distinguishes the argument from the one in \Cref{sec:matrix-defect}, and it is also why the determinantal representation plays no role below. We move one input root at a time and show that every output root must respond. This gives strict contraction of the score in every \(\ell^p\) norm with \(p>1\), as well as strict transport of centered root configurations.

Differentiating with respect to one input root removes one linear factor. We therefore need to understand how convolution with the fixed factor acts on polynomials of degree \(n-1\). We first record how the frozen convolution operator acts on polynomials
of smaller degree.  If
\(q(x)=\sum_{k=0}^n(-1)^k b_kx^{n-k}\), with \(b_0=1\), define
\begin{equation}\label{eq:frozen-convolution-operator}
 T_q=\sum_{k=0}^n(-1)^k b_k\frac{(n-k)!}{n!}\partial_x^k.
\end{equation}
The coefficient formula for convolution gives \(T_qf=f\boxplus_nq\)
for every monic degree-\(n\) polynomial \(f\).  The operator itself is
defined on every polynomial.  In particular, if \(u\) is monic of degree
\(n-1\), then
\begin{equation}\label{eq:lower-degree-frozen-convolution}
 T_qu=u\boxplus_{n-1}\frac{q'}n.
\end{equation}
Indeed, the coefficient indexed by \(k\) in \(q'/n\) is
\(b_k(n-k)/n\), and
\(\frac{(n-k-1)!}{(n-1)!}\frac{n-k}{n}
=\frac{(n-k)!}{n!}\) for \(0\leq k\leq n-1\).
When \(n=2\), the right side of \eqref{eq:lower-degree-frozen-convolution} is a degree-one convolution and as usual, \((x-a)\boxplus_1(x-b)=x-a-b\).

\begin{theorem}[Positivity of the one-sided root Jacobian]
\label{thm:positive-one-sided-jacobian}
Let \(n\geq2\), let \(\alpha_1<\cdots<\alpha_n\), and let \(q=P_\beta\)
be a monic real-rooted polynomial of degree \(n\) that is not a point
mass.  The roots \(\gamma=\Omega(\alpha,\beta)\) are simple.  The matrix
\(E=D_1\Omega(\alpha,\beta)\) is doubly stochastic, and every entry of
\(E\) is strictly positive.
\end{theorem}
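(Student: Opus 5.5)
The plan is to compute the entries of \(E\) by implicit differentiation, read off double stochasticity from two trace identities, and get positivity from an interlacing argument; the step I am least sure of is strictness.

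\textbf{Simplicity and the entry formula.} Simplicity of \(\gamma\) follows from the consequence of Fujie's atom theorem recorded after \Cref{lem:permutation-formula}, since \(P_\alpha\) is simple. Write \(P=P_\alpha\), \(R=T_qP=P\boxplus_nq\) and \(g_i=P/(x-\alpha_i)\). Because \(T_q\) is linear and independent of \(\alpha\), we have \(\partial_{\alpha_i}R=-T_qg_i\). By \eqref{eq:lower-degree-frozen-convolution},
\[
 u_i:=T_qg_i=g_i\boxplus_{n-1}\frac{q'}{n}.
\]
Implicit differentiation at the simple root \(\gamma_j\) then gives
\[
 E_{ij}=\frac{\partial\gamma_j}{\partial\alpha_i}=\frac{u_i(\gamma_j)}{R'(\gamma_j)}.
\]

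\textbf{Double stochasticity.} Since \(P'=\sum_ig_i\) and \(T_q\) commutes with \(\partial_x\), we get \(R'=\sum_iu_i\). Summing the entry formula over \(i\) therefore gives \(\sum_iE_{ij}=1\). Summing over \(j\) gives \(\partial_{\alpha_i}\sum_j\gamma_j=1\), because convolution adds root means. So \(E\) is doubly stochastic as soon as its entries are nonnegative.

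\textbf{Sign of the entries.} Each \(g_i\) is monic of degree \(n-1\) and interlaces \(P\), and \(P'\) is a positive combination of the \(g_i\). The operator \(T_q\) preserves real-rootedness of every real combination \(P+sg_i\), since \(P+sg_i\) is \(P\) with \(\alpha_i\) moved to \(\alpha_i-s\). By the Hermite--Kakeya--Obreschkoff theorem, \(u_i\) therefore interlaces \(R\). Because \(u_i\) and \(R'\) are both monic of degree \(n-1\) and both interlace \(R\), they have the same weak sign at each \(\gamma_j\). This gives \(E_{ij}\geq0\). The remaining work is strictness, that is, \(u_i(\gamma_j)\neq0\).

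\textbf{Strictness, the main obstacle.} Suppose \(u_i(\gamma_j)=0\). Then \(\gamma_j\) is a root of \(T_q((x-c)g_i)\) for every \(c\in\R\), so the plane curve of output roots contains the horizontal line \(x=\gamma_j\). It also follows that \(\gamma_j\) is a common root of \(T_q(xg_i)\) and \(T_qg_i\).

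I would first try to contradict this with Fujie's theorem and the incidence count.
\begin{itemize}
\item Take \(c=\alpha_k\) for \(k\neq i\). The input \((x-c)g_i\) then has a double root, while \(q\) is not a point mass, so its largest root multiplicity is at most \(n-1\). Hence the output is still simple.
\item Let \(c\to\pm\infty\). One output root escapes, and the remaining \(n-1\) roots converge to those of \(u_i=g_i\boxplus_{n-1}q'/n\).
\end{itemize}
The fixed root \(\gamma_j\) would then have to be a root of \(u_i\) that the sweeping root passes through without crossing, by interlacing of the family in \(c\). I expect this to force \(R\) and \(u_i\) to share a root in a way that contradicts strict interlacing.

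As a fallback I would induct on \(n\). For \(n=2\) everything is explicit, since \(\boxplus_1\) is a translation. For the inductive step, the key observation is that \(q'/n\) is again not a point mass and its roots strictly interlace those of \(q\). So the statement \(u_i(\gamma_j)\neq0\) reduces to strict interlacing preservation for \(\boxplus_{n-1}\) with a non-point-mass factor, applied to the pair \(g_i\), \(P\). That preservation statement is what the lower-degree case should supply.

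I expect this strictness step to be the main obstacle. Everything before it is bookkeeping with \(T_q\).
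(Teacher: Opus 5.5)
Your simplicity, entry-formula, double-stochasticity and sign arguments are correct, and they take a different route from the paper. The paper gets double stochasticity and nonnegativity by approximating $\beta$ by simple vectors, citing \cite[Lemma~3.4]{GVS26}, and passing to the limit. Your identity $R'=T_qP'=\sum_i u_i$, mean additivity, and the Hermite--Kakeya--Obreschkoff interlacing of $u_i$ with $R$ give a self-contained proof with no limit. That part is a genuine improvement in economy.

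\textbf{The gap is strictness, which you leave open.} Neither suggested route works as stated. Saying that a common root of $R$ and $u_i$ ``contradicts strict interlacing'' is circular: strict interlacing of $R$ and $u_i$ is exactly the claim $u_i(\gamma_j)\neq0$, and HKO only gives weak interlacing. The induction is also unclear. $u_i$ is a $\boxplus_{n-1}$-convolution with $q'/n$, but $R$ is a $\boxplus_n$-convolution with $q$. The polynomial of the form $(\cdot)\boxplus_{n-1}(q'/n)$ is $R'$, not $R$. So interlacing preservation in degree $n-1$ compares $u_i$ with $R'$, which says nothing about the values $u_i(\gamma_j)$.

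\textbf{The missing step.} You already have both ingredients. Put $a=T_q(xg_i)$ and $r_c=T_q((x-c)g_i)=a-cu_i$.
\begin{itemize}
\item Your first bullet, together with Fujie's theorem for $c\notin\{\alpha_k\}$, shows that $r_c$ is simple for every real $c$.
\item If $u_i(\gamma_j)=0$, then $a(\gamma_j)=0$, so $r_c(\gamma_j)=0$ and $r_c'(\gamma_j)=a'(\gamma_j)-c\,u_i'(\gamma_j)$ for all $c$.
\item $u_i=g_i\boxplus_{n-1}(q'/n)$ is simple-rooted by Fujie's theorem, because $g_i$ is simple (trivially when $n=2$). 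Hence $u_i'(\gamma_j)\neq0$.
\item Taking $c=a'(\gamma_j)/u_i'(\gamma_j)$ makes $\gamma_j$ a double root of $r_c$, a contradiction.
\end{itemize}
This is the paper's argument. Simplicity of $u_i$ is also exactly what your ``sweeping root'' picture lacks. After removing the line $x=\gamma_j$, the residual branch is $c=\tilde a(x)/\tilde u(x)$, with $\tilde a=a/(x-\gamma_j)$ and $\tilde u=u_i/(x-\gamma_j)$. That branch is forced to meet the line only when $\tilde u(\gamma_j)\neq0$. If $u_i$ had a double root at $\gamma_j$, the branch could have a pole there and avoid the line.
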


\begin{proof}
The simplicity of \(P_\alpha\) implies that of the output by Fujie's
atom theorem.  The coefficients of convolution depend polynomially on
the input root coordinates.  The implicit-function theorem therefore makes the ordered output roots, and their first derivatives, analytic near \((\alpha,\beta)\), even if \(\beta\) has repeated coordinates. Approximate \(\beta\) by simple real root vectors. The corresponding one-sided Jacobians are doubly stochastic by \cite[Lemma~3.4]{GVS26} and passage to the limit proves the same assertion for \(E\). In particular, its entries are nonnegative.

Because the output polynomial depends linearly on the moving root’s position, a zero output velocity would force that output root to remain fixed throughout the motion. We show that such a fixed root would become multiple at some position. Fix an input index \(j\), and put \(p_j(x)=\prod_{k\ne j}(x-\alpha_k)\). Use \(t\) for the position of the \(j\)-th root, so the original input is recovered at \(t=\alpha_j\).
Moving only this input root gives \(f_t(x)=(x-t)p_j(x)\).  By linearity of \(T_q\), write 

\begin{equation}\label{eq:single-root-motion}
 r_t=T_qf_t=a_j-tb_j,
 \qquad a_j=T_q(xp_j),\qquad b_j=T_qp_j.
\end{equation}
Equation~\eqref{eq:lower-degree-frozen-convolution} gives \(b_j=p_j\boxplus_{n-1}(q'/n)\).  Here \(p_j\) is simple-rooted and \(q'/n\) is monic and real-rooted by Rolle's theorem.  Thus \(b_j\) is a simple-rooted polynomial of degree \(n-1\).  For degree one this is trivial and in higher degree it follows again from Fujie's atom theorem.

Every \(r_t\), for \(t\in\R\), also has simple roots.  Indeed, \(f_t\) has root multiplicities at most two, since the roots of \(p_j\) are distinct.  The multiplicities of \(q\) are at most \(n-1\), since \(q\) is not a point mass.  A repeated output root would require input multiplicities whose sum is at least \(n+2\), whereas here their sum is at most \(n+1\).

At \(t=\alpha_j\), differentiating the equation for the output root
\(\gamma_i\) gives
\(E_{ij}=b_j(\gamma_i)/r_{\alpha_j}'(\gamma_i)\).
If \(E_{ij}=0\), then \(b_j(\gamma_i)=0\), and
\eqref{eq:single-root-motion} also gives \(a_j(\gamma_i)=0\).
Consequently, \(\gamma_i\) is a root of every \(r_t\).
Since this root of \(b_j\) is simple, choosing
\(t=a_j'(\gamma_i)/b_j'(\gamma_i)\) makes
\(r_t'(\gamma_i)=0\).  This contradicts the simplicity of \(r_t\)
proved above.  Thus \(E_{ij}\ne0\); its non-negativity proves the claim.
\end{proof}

Strict positivity means that each output velocity is an average in which every input velocity has positive weight. For a strictly convex function, Jensen’s inequality is therefore strict on every non-constant input vector. Applying this observation to the score yields the following consequence.

\begin{theorem}[Strict score monotonicity]
\label{thm:strict-score-monotonicity}
Let \(f=P_\alpha\) be monic of degree \(n\geq2\) with simple real roots,
and let \(q=P_\beta\) be monic and real-rooted of the same degree.
Write \(\gamma=\Omega(\alpha,\beta)\).  For every \(p>1\),
\(\|s(\gamma)\|_p\leq\|s(\alpha)\|_p\), with equality if and only if
\(q\) is a point mass.
\end{theorem}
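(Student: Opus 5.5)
We would obtain the statement from \Cref{thm:positive-one-sided-jacobian} combined with score transport, using a strict Jensen argument. First treat the point-mass case separately: if \(q=(x-b)^n\), then \(P_\alpha\boxplus_nq=P_\alpha(x-b)\), so \(\gamma=\alpha+b\one\). Since scores are translation invariant, \(s(\gamma)=s(\alpha)\) and equality holds for every \(p\).

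Now assume \(q\) is not a point mass. Then \(\gamma\) is simple by Fujie's theorem, and \(E=D_1\Omega(\alpha,\beta)\) is doubly stochastic with all entries strictly positive.

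The next step is a one-sided score transport \(s(\gamma)=E\,s(\alpha)\). For simple \(\beta\), this is \eqref{eq:score-transport} with \(a=1\), \(b=0\): the \(J\)-image of \(s(\alpha)\oplus0\) is exactly \(D_1\Omega\,s(\alpha)\). For general real-rooted \(q\), approximate \(\beta\) by simple root vectors, exactly as in the proof of \Cref{cor:matrix-stam}. The output stays simple in the limit, so the ordered roots, their scores and the one-sided Jacobian all converge, and the identity passes to the limit.

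With \(s(\gamma)_i=\sum_jE_{ij}s(\alpha)_j\), we apply Jensen to the strictly convex function \(\phi(x)=|x|^p\), \(p>1\). Row-stochasticity gives \(|s(\gamma)_i|^p\le\sum_jE_{ij}|s(\alpha)_j|^p\). Summing over \(i\) and using column-stochasticity gives \(\|s(\gamma)\|_p^p\le\|s(\alpha)\|_p^p\).

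For strictness, equality would force equality in Jensen for each row. Because every weight \(E_{ij}\) is positive, this forces all \(s(\alpha)_j\) to be equal. The score sums to zero, so then \(s(\alpha)=0\). But \(s(\alpha)_n=\sum_{j<n}(\alpha_n-\alpha_j)^{-1}>0\) for the largest root when \(n\ge2\), a contradiction. Hence the inequality is strict whenever \(q\) is not a point mass.

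The main obstacle is justifying \(s(\gamma)=E\,s(\alpha)\) when \(\beta\) has repeated roots, since \eqref{eq:score-transport} was quoted only for simple inputs. We would handle it by the continuity argument above, relying on Fujie's theorem to keep the limiting output simple so that the implicit-function labels persist. An alternative is to verify the identity directly: \(E\,s(\alpha)\) is the output velocity under the heat-type motion \(\dot\alpha=s(\alpha)\), which sends \(P_\alpha\) to \(P_\alpha-\tfrac12 P_\alpha''\) to first order. This motion commutes with \(T_q\) because \(T_q\) is a polynomial in \(\partial_x\), so the output moves by \(-\tfrac12 R''\), whose root velocities are \(s(\gamma)\). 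This second route avoids approximation entirely, and we would include it if the limit argument seems delicate.
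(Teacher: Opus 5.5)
Your proposal is correct and follows the paper's proof. The point-mass case is a translation. Otherwise, the one-sided identity \(s(\gamma)=E\,s(\alpha)\), obtained by the limiting argument of \Cref{cor:matrix-stam}, combined with strict Jensen for the strictly positive doubly stochastic \(E\) of \Cref{thm:positive-one-sided-jacobian}, gives strictness exactly as you describe.

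Your alternative heat-commutation check of \(s(\gamma)=E\,s(\alpha)\) is also sound and would remove the approximation step. Moving the roots along \(s(\alpha)\) changes \(P_\alpha\) to first order by \(-\tfrac12P_\alpha''\), by interpolation at the simple roots. Since \(T_q\) commutes with \(\partial_x\), the output then changes by \(-\tfrac12R''\), whose root velocities are \(s(\gamma)\).
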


\begin{proof}
Suppose first that \(q\) is not a point mass, and use the matrix \(E\)
from \Cref{thm:positive-one-sided-jacobian}.  The one-sided score-transport
identity \(s(\gamma)=Es(\alpha)\), including repeated coordinates in
\(\beta\), was established in the proof of \Cref{cor:matrix-stam}.

The vector \(s(\alpha)\) is non-constant.  Its coordinates sum to zero,
while pairwise symmetrization gives
\(\langle\alpha^\circ,s(\alpha)\rangle=\binom n2>0\).
Strict convexity of \(x\mapsto|x|^p\), positivity of every entry of
\(E\), and its row and column sums therefore give
\begin{equation}\label{eq:strict-score-jensen}
 \sum_i\left|\sum_j E_{ij}s(\alpha)_j\right|^p
 <\sum_{i,j}E_{ij}|s(\alpha)_j|^p
 =\sum_j|s(\alpha)_j|^p.
\end{equation}
This proves the strict inequality.  If \(q=(x-b)^n\), convolution translates every root of \(f\) by \(b\), all root gaps, and hence all score coordinates, are unchanged.
\end{proof}

The same averaging argument contracts every non-constant velocity vector. Integrating this contraction along a path of input configurations gives a comparison between its endpoints. For increasing root vectors, write
\(d_2(r,\widetilde r)=\|r^\circ-\widetilde r^\circ\|_2\), where
\(r^\circ=r-\bar r\one\).  This distance removes translations.
The centered empirical measure
\(\mu_r^\circ=n^{-1}\sum_i\delta_{r_i-\bar r}\) satisfies
\(d_2(r,\widetilde r)=\sqrt n\,W_2(\mu_r^\circ,\mu_{\widetilde r}^\circ)\),
because increasing matching is optimal on the real line.

\begin{theorem}[Strict centered root transport]
\label{thm:strict-root-transport}
Let \(q\) be a monic real-rooted polynomial of degree \(n\geq2\).
Convolution by \(q\) strictly contracts \(d_2\) between every pair of
simple root configurations that are distinct modulo translation if and
only if \(q\) is not a point mass.  Equivalently, this is the necessary
and sufficient condition for strict contraction of \(W_2\) between
distinct centered empirical measures of simple degree-\(n\) polynomials.
\end{theorem}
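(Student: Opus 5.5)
We prove the two directions separately. For the point-mass direction, suppose \(q=(x-b)^n\). Then \(f\boxplus_nq=f(x-b)\), so \(\Omega(\alpha,\beta)=\alpha+b\one\) for every \(\alpha\). Centered root vectors are therefore unchanged, and \(d_2\) is preserved exactly. Strict contraction thus fails for every pair that is distinct modulo translation; such pairs exist, for instance \(\alpha\) and \(2\alpha\) with \(\alpha\) simple and \(n\ge2\).

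For the main direction, assume \(q\) is not a point mass. Let \(\alpha,\widetilde\alpha\) be simple, increasing, and distinct modulo translation. Since \(d_2\) and the map \(\Omega(\cdot,\beta)\) commute with translations, we may assume both are centered. Join them by the segment \(\alpha_s=(1-s)\alpha+s\widetilde\alpha\) for \(s\in[0,1]\). The set of increasing simple vectors is convex, so every \(\alpha_s\) is increasing and simple. The velocity \(\delta=\widetilde\alpha-\alpha\) is centered, and it is nonzero, hence non-constant. By \Cref{thm:positive-one-sided-jacobian}, the output path \(\gamma_s=\Omega(\alpha_s,\beta)\) consists of simple increasingly ordered roots. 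It is \(C^1\), with \(\dot\gamma_s=E_s\delta\), where \(E_s\) is doubly stochastic with strictly positive entries. Column sums equal to one give \(\langle \one,E_s\delta\rangle=0\), so \(\dot\gamma_s\) is centered. The strict Jensen argument of \eqref{eq:strict-score-jensen}, with \(p=2\) and \(\delta\) in place of the score, gives \(\|E_s\delta\|_2<\|\delta\|_2\) for every \(s\).

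Since \(\gamma_0\) and \(\gamma_1\) are increasingly ordered, we can write
\[
 d_2(\gamma_0,\gamma_1)=\|\gamma_1^\circ-\gamma_0^\circ\|_2
 =\Bigl\|\int_0^1(E_s\delta)^\circ\,ds\Bigr\|_2
 \le\int_0^1\|E_s\delta\|_2\,ds<\|\delta\|_2=d_2(\alpha,\widetilde\alpha).
\]
The middle equality uses that centering is linear and that \(\dot\gamma_s\) is already centered. The final strict inequality uses continuity of \(s\mapsto\|E_s\delta\|_2\) on the compact interval \([0,1]\), which is where we need \(C^1\) dependence of \(E_s\). That continuity comes from analyticity of the simple output roots in the inputs, even when \(\beta\) has repeated coordinates, as recorded before \Cref{thm:positive-one-sided-jacobian}.

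The \(W_2\) reformulation then follows from the identity \(d_2=\sqrt n\,W_2\) for centered empirical measures of increasing vectors. Outputs are again increasingly ordered, so the same identity applies on both sides.

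The main obstacle is making sure the path stays inside the locus where \(E_s\) has strictly positive entries. This needs simplicity of every \(\alpha_s\), which convexity of the ordered simple cone supplies. It also needs the hypothesis on \(q\) to be independent of \(s\), which holds because \(q\) is fixed.
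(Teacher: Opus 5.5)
Your proposal is correct and follows the paper's proof essentially step for step. Both arguments use the segment in the convex ordered simple chamber, the strictly positive doubly stochastic one-sided Jacobian from \Cref{thm:positive-one-sided-jacobian}, strict Jensen at \(p=2\), and integration of the centered output velocity, and in both the point mass reduces to a rigid translation. The only cosmetic difference is that you get strictness by integrating a continuous function that is strictly smaller at every point, while the paper first extracts a uniform constant \(\kappa<1\) by compactness.
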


\begin{proof}
Suppose that \(q=P_\beta\) is not a point mass.  Let
\(\alpha,\widetilde\alpha\) be increasing simple root vectors with
\(v=\widetilde\alpha^\circ-\alpha^\circ\ne0\), and set
\(\alpha_t=(1-t)\alpha^\circ+t\widetilde\alpha^\circ\) for
\(0\leq t\leq1\).  The ordered simple chamber is convex, so each
\(\alpha_t\) is simple.  By
\Cref{thm:positive-one-sided-jacobian}, the matrix
\(E_t=D_1\Omega(\alpha_t,\beta)\) has strictly positive entries and is
doubly stochastic.  Since the nonzero centered vector \(v\) is
non-constant, strict Jensen gives \(\|E_tv\|_2<\|v\|_2\).
Continuity on the compact segment provides \(\kappa<1\) such that
\(\|E_tv\|_2\leq\kappa\|v\|_2\) for all \(t\in[0,1]\).

The mean of \(\Omega(\alpha_t,\beta)\) is the constant \(\bar\beta\).
Thus centering this output path does not change its derivative, which
is \(E_tv\).  Translation covariance and the fundamental theorem of
calculus now yield
\begin{equation}\label{eq:strict-transport-segment}
 d_2\bigl(\Omega(\alpha,\beta),
          \Omega(\widetilde\alpha,\beta)\bigr)
 \leq\int_0^1\|E_tv\|_2\,dt
 \leq\kappa\|v\|_2
 <d_2(\alpha,\widetilde\alpha).
\end{equation}
The factor \(\kappa\) depends on the fixed polynomial and the chosen
input segment.  The empirical-measure formulation follows from the
identity between \(d_2\) and \(\sqrt n\,W_2\).
If \(q=(x-b)^n\), convolution is translation by \(b\), so every centered
root distance is preserved.  This also proves necessity.
\end{proof}

\begin{example}[A repeated-root factor that strictly mixes velocities]
\label{ex:strict-transport-repeated-factor}
Take \(q=x^{n-1}(x-b)\), with \(b\ne0\).  For \(n\geq3\) this factor
has repeated roots, but it is not a point mass.  Its frozen operator is
\(T_q=I-(b/n)\partial_x\).  If \(f=P_\alpha\) is simple-rooted and
\(\gamma\) is the output root vector, no \(\gamma_i\) equals an
\(\alpha_j\), since \(T_qf(\alpha_j)=-(b/n)f'(\alpha_j)\ne0\).
The equation \(T_qf(\gamma_i)=0\) can therefore be written as
\(\sum_j(\gamma_i-\alpha_j)^{-1}=n/b\).  Implicit differentiation gives
the explicit averaging weights
\begin{equation}\label{eq:repeated-factor-jacobian}
 E_{ij}=
 \frac{(\gamma_i-\alpha_j)^{-2}}
      {\sum_k(\gamma_i-\alpha_k)^{-2}}.
\end{equation}
Every input root contributes a positive weight to every output velocity.

For a concrete cubic, choose \(f=x^3-9x\) and \(q=x^2(x-4)\). The input roots of \(f\) are \((-3,0,3)\), and \(T_qf=x^3-4x^2-9x+12=(x-1)(x^2-3x-12)\). Its middle root is \(\gamma_2=1\). The squared inverse distances from this root to the input roots are \(1/16,1,1/4\), so the second row of
\(E\) is \((1,16,4)/21\). The input score is \((-1/2,0,1/2)\) and its average with these weights is \(1/14\), which is exactly the score of the middle output root. Thus even a factor with a double root produces a strict average of the non-constant input score.
\end{example}

\section{The convolution seen at a collision}
\label{sec:collision-local}
The output roots remained simple along the motions studied in the preceding section. We now go back to the collisions used to classify equality, and ask how the roots separate when the curve does not split.
The equality argument used collisions to determine when every output root
can follow an affine line. The collision also contains information about
curves that do not split. Fujie's theorem says how many roots meet, and we now determine the rate with which they separate. Looked at on the scale of that separation, what one finds is another finite free convolution, of degree equal to the multiplicity of the collision. This description will identify the local branches of a finite free curve and the singular terms in its entropy and Fisher information.

\subsection{The tangent convolution}

We allow repeated roots at the parameter value under consideration.  Let
\(\alpha,\beta,u,v\in\R^n\), and put
\(p_\eps=P_{\alpha+\eps u}\), \(q_\eps=P_{\beta+\eps v}\), and
\(h_\eps=p_\eps\boxplus_nq_\eps\).  Thus the input roots depend
affinely on the real parameter \(\eps\).  If \(a\) is a root of
\(p_0\) and \(b\) is a root of \(q_0\), write
\(I=\{i:\alpha_i=a\}\) and \(J=\{j:\beta_j=b\}\).  Their
cardinalities will be denoted by \(m\) and \(k\).  The polynomials
\(U(z)=\prod_{i\in I}(z-u_i)\) and
\(V(z)=\prod_{j\in J}(z-v_j)\) record the velocities within these
two groups of roots.  We include degree-one convolution in the notation,
with \((z-a)\boxplus_1(z-b)=z-a-b\).
We examine the output near \(a+b\) in the coordinate \(z=(x-a-b)/\varepsilon\). The following theorem shows that this scale captures all the output roots approaching \(a+b\), and identifies their limiting positions. It also determines the leading coefficient of the rescaled polynomial, which retains the contribution of the input roots outside the two colliding groups.

\begin{theorem}[Tangent convolution]\label{thm:tangent-convolution}
Let \(n\geq2\), let \(\alpha,\beta,u,v\in\R^n\), and suppose that
\(a,b\) have multiplicities \(m,k\) as above, with
\(r=m+k-n>0\).  Define the monic degree-\(r\) polynomial
\begin{equation}\label{eq:tangent-convolution-polynomial}
 Q(z)=
 \left(\frac{r!}{m!}U^{(m-r)}(z)\right)
 \boxplus_r
 \left(\frac{r!}{k!}V^{(k-r)}(z)\right)
\end{equation}
and the nonzero real constant
\begin{equation}\label{eq:tangent-convolution-constant}
 c_{\mathrm{tan}}=\frac{m!k!}{n!r!}
       \prod_{i\notin I}(a-\alpha_i)
       \prod_{j\notin J}(b-\beta_j).
\end{equation}
There is a polynomial \(R\in\R[z,\eps]\) such that
\begin{equation}\label{eq:tangent-convolution-expansion}
 h_\eps(a+b+\eps z)
   =\eps^r\bigl(c_{\mathrm{tan}}Q(z)+\eps R(z,\eps)\bigr).
\end{equation}
In particular, \(\eps^{-r}h_\eps(a+b+\eps z)\) converges
coefficientwise, and locally uniformly for \(z\in\C\), to \(c_{\mathrm{tan}}Q(z)\).

The \(r\) output roots approaching \(a+b\), after subtraction of
\(a+b\) and division by \(\eps\), converge as an unordered multiset
to the roots of \(Q\).  If these roots are distinct, say
\(\lambda_1<\cdots<\lambda_r\), the corresponding output roots admit
real analytic branches \(x_1(\eps),\ldots,x_r(\eps)\) through the
parameter value zero,
with
\begin{equation}\label{eq:tangent-convolution-branches}
 x_j(\eps)=a+b+\eps\lambda_j+O(\eps^2),
 \qquad 1\leq j\leq r.
\end{equation}
Within this cluster, the branches are increasingly ordered for positive
\(\eps\) and decreasingly ordered for negative \(\eps\).
\end{theorem}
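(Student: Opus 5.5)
We will work directly from the permutation formula of \Cref{lem:permutation-formula}. Substituting \(x=a+b+\eps z\) gives
\[
 h_\eps(a+b+\eps z)=\frac1{n!}\sum_{\pi\in S_n}\prod_{i=1}^n\bigl(a+b-\alpha_i-\beta_{\pi(i)}+\eps(z-u_i-v_{\pi(i)})\bigr).
\]
Each factor with \(i\in I\) and \(\pi(i)\in J\) equals exactly \(\eps(z-u_i-v_{\pi(i)})\). Every other factor has a constant term \(a+b-\alpha_i-\beta_{\pi(i)}\). For \(i\in I\) with \(\pi(i)\notin J\) this constant is \(b-\beta_{\pi(i)}\ne0\), and symmetrically for \(i\notin I\) with \(\pi(i)\in J\). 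For \(i\notin I\) with \(\pi(i)\notin J\) the constant may or may not vanish.

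\textbf{Order of vanishing.} Let \(\ell=|\{i\in I:\pi(i)\in J\}|\). Since \(|I|+|J|=n+r\), we have \(\ell\geq r\), so each term is divisible by \(\eps^\ell\). The \(\eps^r\)-coefficient therefore comes only from permutations with exactly \(r\) matches. For these, counting shows the remaining \(m-r\) labels of \(I\) map bijectively onto \(J^c\) and \(J\setminus\pi(I)\) receives exactly \(I^c\). Hence no factor with \(i\notin I,\pi(i)\notin J\) appears, and every non-matched factor is a nonzero constant at \(\eps=0\).

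In that case the \(\eps^r\)-coefficient of the term is
\[
\prod_{i\in I,\pi(i)\in J}(z-u_i-v_{\pi(i)})\cdot\prod_{i\in I\setminus\pi^{-1}J}(b-\beta_{\pi(i)})\prod_{j\in J\setminus\pi(I)}(a-\alpha_{\pi^{-1}j}),
\]
and the constant products are always the full products \(\prod_{j\notin J}(b-\beta_j)\prod_{i\notin I}(a-\alpha_i)\), independent of \(\pi\). Permutations with more than \(r\) matches contribute only to order \(\eps^{r+1}\) and higher, which produces \(R\).

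\textbf{Counting and averaging.} It remains to sum over the matched data. Choose \(S\subset I\) and \(T\subset J\) of size \(r\) and a bijection \(\sigma:S\to T\). The number of completions of \(\pi\) is \((m-r)!\,(k-r)!\,(n-m)!\). This follows because \(I\setminus S\) maps onto \(J^c\) (size \(n-k=m-r\)), and \(I^c\) maps onto \(J\setminus T\) (size \(n-m=k-r\)).

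Averaging over \(S\) the product \(\prod_{i\in S}(z-u_i)\) gives \(\binom mr^{-1}\cdot\frac{r!}{m!}\cdot\frac{m!}{(m-r)!}\cdots\). The cleaner route is the identity
\[
\binom mr^{-1}\sum_{|S|=r}\prod_{i\in S}(z-u_i)=\frac{r!}{m!}U^{(m-r)}(z),
\]
and similarly for \(V\). The average over bijections \(\sigma\) of \(\prod_{i\in S}(z-u_i-v_{\sigma(i)})\) is, by \Cref{lem:permutation-formula} in degree \(r\), the convolution \(P_{u_S}\boxplus_r P_{v_T}\). Bilinearity of \(\boxplus_r\) in the coefficients (the map is linear in each input's coefficient vector, extended to the affine space of monic polynomials, and averages of monic polynomials stay monic) then yields \(Q\).

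Collecting the constants gives
\[
\frac{1}{n!}\binom mr\binom kr\,r!\,(m-r)!\,(k-r)!\,(n-m)!.
\]
Using \(n-m=k-r\), this should simplify to \(\frac{m!k!}{n!r!}\), confirming \(c_{\mathrm{tan}}\). This bookkeeping is the main place errors could arise. I expect the subtle point to be precisely the claim that non-matched factors have nonzero constants, which hinges on the exact-\(r\) counting forcing \(I^c\to J\) and \(I\to J^c\) on the unmatched part.

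\textbf{Root statements.} Since \(c_{\mathrm{tan}}\ne0\), Hurwitz's theorem applied to \(\eps^{-r}h_\eps(a+b+\eps z)\to c_{\mathrm{tan}}Q\) shows that exactly \(r\) rescaled roots stay bounded and converge to the roots of \(Q\). The other output roots have \(|z|\to\infty\), i.e.\ they do not approach \(a+b\) at rate \(\eps\). To see that no further roots approach \(a+b\) at all, we use Fujie's theorem: the multiplicity of \(a+b\) at \(\eps=0\) is exactly \(r\), so exactly \(r\) roots converge to \(a+b\).

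If the roots \(\lambda_j\) of \(Q\) are simple, the polynomial \(G(z,\eps)=c_{\mathrm{tan}}Q(z)+\eps R(z,\eps)\) satisfies \(\partial_zG(\lambda_j,0)\ne0\). The analytic implicit function theorem then gives real analytic \(z_j(\eps)\) with \(z_j(0)=\lambda_j\); real-valuedness follows from the real coefficients. Setting \(x_j=a+b+\eps z_j(\eps)\) gives the expansion in the statement, and the ordering claim follows from the sign of \(\eps\) multiplying the increasing \(\lambda_j\).
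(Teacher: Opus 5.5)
Your proposal follows the paper's proof essentially step for step. The steps match: the permutation formula, then the reduction to matchings with exactly \(r\) internal pairs. That reduction forces \(I\setminus S\to J^c\) and \(I^c\to J\setminus T\), so the outside factors give the \(\pi\)-independent constant \(G=\prod_{i\notin I}(a-\alpha_i)\prod_{j\notin J}(b-\beta_j)\). Next come the normalized-derivative identity with bilinearity of \(\boxplus_r\), then Hurwitz/Rouch\'e together with Fujie's exact multiplicity, and finally the analytic implicit-function theorem for the branches.

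There is, however, a concrete error in exactly the bookkeeping you flagged as risky. For fixed \(S,T\) and \(\sigma:S\to T\), the number of completions is \((m-r)!\,(n-m)!\). This counts one bijection \(I\setminus S\to J^c\) and one bijection \(I^c\to J\setminus T\), which is what your own justification says. Writing \((m-r)!\,(k-r)!\,(n-m)!\) counts the second bijection twice, since \(k-r=n-m\). Your displayed constant then equals \(\frac{m!\,k!}{n!\,r!}(n-m)!\), which is wrong whenever \(n-m\geq2\), so it does not ``simplify'' to the claimed value.

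With the correct count, \(\binom mr(m-r)!=m!/r!\) and \(\binom kr(n-m)!=\binom kr(k-r)!=k!/r!\). Together with the factor \(r!\) from the internal bijections and \(1/n!\), this gives \(G\,m!\,k!/(n!\,r!)=c_{\mathrm{tan}}\), as required.

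As a side remark, the expansion itself also yields the exact multiplicity without Fujie. The coefficient of \(\eps^jz^j\) in \(h_\eps(a+b+\eps z)\) is \(h_0^{(j)}(a+b)/j!\). Hence divisibility by \(\eps^r\) gives \(h_0^{(j)}(a+b)=0\) for \(j<r\), while \(h_0^{(r)}(a+b)/r!=c_{\mathrm{tan}}\neq0\).
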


\begin{proof}
Use the permutation formula of \Cref{lem:permutation-formula}.  For a
permutation \(\pi\in S_n\), let \(d_\pi\) be the number of indices
\(i\in I\) for which \(\pi(i)\in J\).  At most \(n-k\) indices
of \(I\) can be matched outside \(J\), so
\(d_\pi\geq m-(n-k)=r\).  Each such internal match contributes
the factor \(\eps(z-u_i-v_{\pi(i)})\) after the substitution
\(x=a+b+\eps z\).  Every summand is therefore divisible by
\(\eps^r\).  A summand with \(d_\pi>r\) contributes nothing to
the quotient at \(\eps=0\).

Consider a matching with exactly \(r\) internal matches, and let
\(S\subset I\), \(T\subset J\) be the matched subsets, both of
cardinality \(r\).  The remaining \(m-r=n-k\) indices of \(I\)
are matched bijectively to the complement of \(J\), and the complement
of \(I\) is matched bijectively to the remaining \(k-r=n-m\)
indices of \(J\).  There are no matches between the two complements.
At \(\eps=0\), these outside matches contribute
\[
 G=\prod_{i\notin I}(a-\alpha_i)
       \prod_{j\notin J}(b-\beta_j),
\]
independently of their assignment.  For a fixed bijection from \(S\)
to \(T\), the two outside assignments can be chosen in
\((n-k)!(n-m)!\) ways.  Summing over the \(r!\) internal
bijections gives \(r!(P_{u_S}\boxplus_rP_{v_T})(z)\).  Consequently,
the value of the quotient at zero is
\begin{equation}\label{eq:tangent-matching-count}
 \frac{G\,r!(n-k)!(n-m)!}{n!}
 \sum_{\substack{S\subset I,\ |S|=r\\ T\subset J,\ |T|=r}}
       (P_{u_S}\boxplus_rP_{v_T})(z).
\end{equation}

We have averaged over the bijections between the selected roots. It remains to average over the choices of the \(r\)-element subsets themselves. The average of the corresponding monic products is a normalized derivative of the original velocity polynomial. Differentiating a product gives
\[
 \frac{r!}{m!}U^{(m-r)}
   =\binom mr^{-1}\sum_{\substack{S\subset I\\|S|=r}}P_{u_S},
 \qquad
 \frac{r!}{k!}V^{(k-r)}
   =\binom kr^{-1}\sum_{\substack{T\subset J\\|T|=r}}P_{v_T}.
\]
The coefficient formula makes convolution bilinear in its polynomial arguments. Hence the double sum in \eqref{eq:tangent-matching-count} is \(\binom mr\binom kr Q\). Its scalar coefficient simplifies to
\(Gm!k!/(n!r!)=c_{\mathrm{tan}}\), because \(n-k=m-r\) and \(n-m=k-r\). This proves \eqref{eq:tangent-convolution-expansion} and the polynomial divisibility also proves both stated forms of convergence.

Fujie's atom theorem says that \(a+b\) has multiplicity exactly \(r\)
in \(h_0\).  The constant can also be checked directly from the
expansion: the coefficient of \(\eps^rz^r\) is
\(h_0^{(r)}(a+b)/r!=c_{\mathrm{tan}}\). The two normalized derivatives in
\eqref{eq:tangent-convolution-polynomial} are real-rooted by Rolle's
theorem, and their convolution is real-rooted by Walsh's theorem.

We have identified the limiting polynomial. We must still show that its roots describe precisely the output cluster approaching \(a+b\).  Choose a circle containing every root of \(Q\), with no root on its boundary. Local uniform convergence and Rouch\'e's theorem show that,
for sufficiently small nonzero \(\eps\), the rescaled polynomial
has exactly \(r\) roots inside this circle.  Their unscaled roots tend
to \(a+b\), so they are precisely its cluster, since \(h_0\) has
multiplicity \(r\) there.  Applying the same argument to disjoint
small circles around the distinct roots of \(Q\) proves multiset
convergence, including their multiplicities.

If \(\lambda_j\) is simple, apply the analytic implicit-function
theorem to \(c_{\mathrm{tan}}Q(z)+\eps R(z,\eps)\) at \((\lambda_j,0)\).
It gives a real analytic solution \(z_j(\eps)\) with
\(z_j(0)=\lambda_j\).  Setting
\(x_j(\eps)=a+b+\eps z_j(\eps)\) proves
\eqref{eq:tangent-convolution-branches}.  The signs of the differences
\(x_j(\eps)-x_i(\eps)=\eps(\lambda_j-\lambda_i)+O(\eps^2)\)
give the assertion about ordering.
\end{proof}

The leading configuration depends only on the velocities inside the two
colliding groups.  The other roots enter the leading polynomial through
the scalar \(c_{\mathrm{tan}}\). When the output multiplicity is \(r=1\), \(Q\) has its single root at
\(m^{-1}\sum_{i\in I}u_i+k^{-1}\sum_{j\in J}v_j\), so the
formula also describes simple atom roots.  More generally, the two
factors in \eqref{eq:tangent-convolution-polynomial} are again monic
real-rooted polynomials of the same degree.  If \(r\geq2\) and their roots are subjected to a further affine perturbation and form an atom collision, the theorem applies again in degree \(r\).  This is the recursive feature of the description where each localization remains within finite free convolution.

For affine input motions, the tangent configuration is simple whenever the output has simple roots at even one parameter value. The reason is that a repeated tangent root would force groups of input roots to share both their initial position and their velocity, producing a repeated output root for every nearby parameter.
\begin{lemma}[Simplicity of the tangent configuration]
\label{lem:affine-tangent-simplicity}
In the setting of \Cref{thm:tangent-convolution}, suppose that
\(\operatorname{Disc}_x(h_\eps)\) is not identically zero as a
polynomial in \(\eps\).  Then \(Q\) is simple-rooted.
\end{lemma}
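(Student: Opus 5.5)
The plan is to prove the contrapositive: if \(Q\) has a repeated root, then \(h_\eps\) has a repeated root for every \(\eps\), so that \(\operatorname{Disc}_x(h_\eps)\equiv0\). If \(r=1\), then \(Q\) is linear and there is nothing to prove, so assume \(r\geq2\). Write \(\widetilde U=\frac{r!}{m!}U^{(m-r)}\) and \(\widetilde V=\frac{r!}{k!}V^{(k-r)}\). By Rolle's theorem these are monic real-rooted polynomials of degree \(r\), and \(Q=\widetilde U\boxplus_r\widetilde V\).

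First, I would apply Fujie's atom theorem in degree \(r\) to \(Q\), in the form stated after \Cref{lem:permutation-formula}. A repeated root of \(Q\) cannot be nontrivial. It must therefore equal \(u_*+v_*\), where \(u_*\) is a root of \(\widetilde U\) of multiplicity \(m'\) and \(v_*\) is a root of \(\widetilde V\) of multiplicity \(k'\), with \(m'+k'-r\geq2\). Since \(m',k'\leq r\), this forces \(m'\geq2\) and \(k'\geq2\).

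Second, I would transfer these multiplicities back to \(U\) and \(V\). For a real-rooted polynomial \(U\), Rolle interlacing shows that every multiple root of a derivative \(U^{(j)}\) comes from a multiple root of \(U\). More precisely, a root of \(U^{(j)}\) of multiplicity \(\mu\geq2\) is a root of \(U\) of multiplicity exactly \(\mu+j\). The proof is induction on \(j\): between consecutive distinct roots of a real-rooted polynomial, the derivative has exactly one simple critical root, and all remaining roots of the derivative are the old multiple roots with multiplicity lowered by one. Applying this with \(j=m-r\) shows that \(u_*\) occurs among the velocities \(\{u_i:i\in I\}\) with multiplicity \(M=m'+m-r\). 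Likewise \(v_*\) occurs among \(\{v_j:j\in J\}\) with multiplicity \(K=k'+k-r\).

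Third, I would return to the affine families. The \(M\) input roots \(\alpha_i+\eps u_i\) with \(i\in I\) and \(u_i=u_*\) all equal \(a+\eps u_*\) for every \(\eps\). Similarly, \(K\) roots of \(q_\eps\) equal \(b+\eps v_*\) identically. Their multiplicities in \(p_\eps\) and \(q_\eps\) are at least \(M\) and \(K\), and
\[
 M+K=(m'+k')+(m+k)-2r\geq r+2+n-r=n+2.
\]
Fujie's theorem in degree \(n\) then gives that \(a+b+\eps(u_*+v_*)\) is an output root of multiplicity at least \(2\) for every \(\eps\). Hence \(\operatorname{Disc}_x(h_\eps)\) vanishes identically, which contradicts the hypothesis.

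The step I expect to be the main obstacle is the derivative-multiplicity statement in the second step. One has to check carefully that a simple root of \(U\) cannot contribute to a multiple root of \(U^{(j)}\); this is where real-rootedness is essential, since complex-rooted polynomials can acquire new multiple critical points. If the induction becomes delicate, I would instead count roots directly. If \(U\) has distinct roots \(c_1<\cdots<c_p\) with multiplicities \(\mu_l\), then \(U^{(j)}\) has \(c_l\) as a root of multiplicity \(\max(\mu_l-j,0)\). The number of remaining roots equals the number of simple roots forced by repeated application of Rolle's theorem between distinct roots. Comparing the total with the degree \(m-j\) shows that these remaining roots are simple and avoid the \(c_l\).
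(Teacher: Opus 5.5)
Your proposal is correct and follows the paper's proof essentially step for step. Both arguments apply Fujie's theorem in degree \(r\) to obtain multiplicities at least two, use iterated Rolle to lift them to multiplicities \(m-r+\mu\) and \(k-r+\nu\) among the velocities, and then apply Fujie's theorem in degree \(n\) to the persistent input atoms. The one difference is minor: you use lower bounds on the input multiplicities valid for every \(\eps\), whereas the paper fixes exact multiplicities for small nonzero \(\eps\) and concludes from vanishing of the discriminant on a punctured interval.
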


\begin{proof}
The assertion is immediate for \(r=1\).  Suppose \(r\geq2\), and assume that \(Q\) has a root of multiplicity \(\rho\geq2\). Fujie's atom theorem supplies roots of its two degree-\(r\) factors
with multiplicities \(\mu,\nu\) such that \(\mu+\nu-r=\rho\). The factors have degree \(r\), so
\(\mu,\nu\leq r\). Since \(\rho\geq2\), both multiplicities are at least two. Iterated Rolle's theorem shows that a multiple root of a derivative of a real-rooted polynomial must be inherited from
a root of the original polynomial: each differentiation lowers an inherited multiplicity by one, and all roots newly occurring between distinct roots are simple.  Thus the two roots in question come from
values \(u_0,v_0\) occurring among the roots of \(U,V\) with respective multiplicities \(M=m-r+\mu\) and \(K=k-r+\nu\).

These input roots remain equal to \(a+\eps u_0\) and
\(b+\eps v_0\), respectively, for every \(\eps\).  For all
sufficiently small nonzero \(\eps\), no additional input root joins
either group, because the other roots have a different intercept or a
different slope.  Their multiplicities are therefore exactly \(M,K\),
and \(M+K-n=\rho\).  Fujie's theorem forces an output root of
multiplicity \(\rho\) at \(a+b+\eps(u_0+v_0)\).  The
discriminant vanishes on a punctured interval and hence identically,
contrary to the hypothesis.
\end{proof}

In particular, the tangent configuration is simple whenever the output is
simple for one parameter value.  Distinct velocities within each input
collision group are another sufficient condition, by Rolle and Fujie.
Without such a condition, the polynomial limit in
\Cref{thm:tangent-convolution} still holds.  The lemma shows that a
repeated tangent root in an affine real family forces a persistent output
collision.  One cannot then use a nonzero discriminant or finite Fisher
information on the nearby fibers.

\subsection{Discriminant and information at the boundary}

To recover the discriminant, we must keep track of every output root,
including those that do not collide.  Write
\(h_0(x)=\prod_{\ell=1}^{\kappa}(x-c_\ell)^{r_\ell}\), where
\(\kappa\) is the number of clusters, \(c_1<\cdots<c_{\kappa}\), and
\(\sum_\ell r_\ell=n\).  For \(r_\ell\geq2\). For \(r_\ell\geq2\), Fujie's theorem identifies an input atom pair giving
this output root, and we let \(Q_\ell\) be its tangent polynomial from
\Cref{thm:tangent-convolution}. The pair is unique. If two distinct pairs gave the same output root, then, the two sums being equal, both coordinates would differ and the corresponding groups of equal input roots would then be disjoint, so the four multiplicities would sum to at most \(2n\). But each pair producing an output atom has multiplicities summing to more than \(n\), which puts the total above \(2n\).

For a singleton cluster, define \(Q_\ell(z)=z-\lambda_\ell\),
where implicit differentiation gives
\(\lambda_\ell=-\left.\partial_\eps h_\eps(c_\ell)\right|_{\eps=0}/
h_0'(c_\ell)\). In degree one we set \(\operatorname{Disc}(Q_\ell)=1\),
this being the empty product of squared root gaps, and we assign no
degree-one value to \(\chi\) or \(\Phi\), since both need at least two
roots to be defined at all. 
It is worth saying in advance what the result will look like. Every pair of roots that collide contributes two to the vanishing order of the
discriminant, because each root gap enters it squared.  What survives in
the constant is then of two kinds, the tangent configurations themselves,
and the distances between distinct clusters.

\begin{proposition}[Discriminant at a collision]
\label{prop:collision-discriminant}
For the affine families \(p_\eps,q_\eps,h_\eps\) above, suppose
that each tangent polynomial \(Q_\ell\) is simple-rooted.  Put
\begin{equation}\label{eq:collision-discriminant-data}
 M=\sum_{\ell=1}^{\kappa} r_\ell(r_\ell-1),
 \qquad
 D_*=
 \prod_{\ell=1}^{\kappa}\operatorname{Disc}(Q_\ell)
 \prod_{\ell<j}|c_\ell-c_j|^{2r_\ell r_j}.
\end{equation}
Then \(D_*>0\), and there is a polynomial \(d_{\mathrm{reg}}\in\R[\eps]\)
with \(d_{\mathrm{reg}}(0)=D_*\) such that
\begin{equation}\label{eq:collision-discriminant-factorization}
 \operatorname{Disc}(h_\eps)=\eps^M d_{\mathrm{reg}}(\eps).
\end{equation}
In particular, the discriminant has vanishing order exactly \(M\) at
zero.  The hypothesis holds whenever
\(\operatorname{Disc}(h_\eps)\) is not identically zero.
\end{proposition}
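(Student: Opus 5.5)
The plan is to write the discriminant as a product of squared root gaps and expand each gap using the tangent convolution. Since \(h_\eps\) is monic of degree \(n\) in \(x\) and its coefficients are polynomial in \(\eps\), \(\operatorname{Disc}(h_\eps)\) is a polynomial in \(\eps\). It will therefore suffice to show that \(\eps^{-M}\operatorname{Disc}(h_\eps)\to D_*\) as \(\eps\to0\) with \(\eps\ne0\), and that \(D_*>0\). The factorization \(\operatorname{Disc}(h_\eps)=\eps^M d_{\mathrm{reg}}(\eps)\) with \(d_{\mathrm{reg}}(0)=D_*\) then follows by comparing orders of vanishing: the limit forces the coefficients of \(\eps^j\) to vanish for \(j<M\) and forces the coefficient of \(\eps^M\) to equal \(D_*\).

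For small nonzero \(\eps\), I will split the output roots into clusters by Rouché, as in the proof of \Cref{thm:tangent-convolution}. Exactly \(r_\ell\) roots of \(h_\eps\) lie near \(c_\ell\).
\begin{itemize}
\item For \(r_\ell\geq2\), \Cref{thm:tangent-convolution} applied to the unique input atom pair producing \(c_\ell\) shows that these roots have the form \(c_\ell+\eps z\), where \(z\) converges as a multiset to the roots of \(Q_\ell\).
\item For \(r_\ell=1\), the implicit function theorem gives the analytic branch \(c_\ell+\eps\lambda_\ell+O(\eps^2)\). This is again \(c_\ell+\eps z\) with \(z\) tending to the root of \(Q_\ell\).
\end{itemize}

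I then group the factors of \(\prod_{i<j}(x_i-x_j)^2\) according to clusters.
\begin{itemize}
\item A pair inside cluster \(\ell\) contributes \(\eps^2(z_i-z_j)^2\). The product over the cluster is \(\eps^{r_\ell(r_\ell-1)}\) times a quantity tending to \(\operatorname{Disc}(Q_\ell)\), by continuity of symmetric functions of the roots.
\item A pair from clusters \(\ell\neq j\) contributes a factor tending to \((c_\ell-c_j)^2\). There are \(r_\ell r_j\) such pairs, which yields \(|c_\ell-c_j|^{2r_\ell r_j}\).
\end{itemize}
Multiplying over all clusters gives \(\eps^{-M}\operatorname{Disc}(h_\eps)\to D_*\).

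For positivity, each \(Q_\ell\) is real-rooted by Rolle and Walsh, as noted in \Cref{thm:tangent-convolution}, and simple by hypothesis. Hence \(\operatorname{Disc}(Q_\ell)>0\). The \(c_\ell\) are distinct, so every factor of \(D_*\) is positive and \(D_*>0\). The final sentence of the proposition is exactly \Cref{lem:affine-tangent-simplicity}, applied to each cluster with \(r_\ell\geq2\); singleton clusters are trivially simple.

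The main obstacle I expect is the bookkeeping behind the claim that the rescaled roots converge jointly, as multisets, within every cluster at once, so that the limit of the discriminant may be taken factorwise. The cleanest route is to avoid the question of labelling branches altogether. The discriminant of a cluster is a symmetric polynomial in the cluster's roots, namely \(\operatorname{Disc}\) of the monic factor of \(h_\eps\) belonging to that cluster. Multiset convergence of the rescaled roots then suffices; no analytic branches are needed.
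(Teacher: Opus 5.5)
Your proposal is correct and follows essentially the same route as the paper: you factor the discriminant into squared root gaps grouped by cluster, read off $\eps^{r_\ell(r_\ell-1)}\operatorname{Disc}(Q_\ell)$ from the tangent convolution and $|c_\ell-c_j|^{2r_\ell r_j}$ from the gaps between clusters, and then use polynomiality in $\eps$ to obtain divisibility by $\eps^M$ with constant term $D_*$. The one difference is that you use multiset convergence and the symmetry of each cluster discriminant where the paper uses the analytic branches $c_\ell+\eps\lambda_{\ell a}+O(\eps^2)$; this is slightly leaner, because simplicity of the $Q_\ell$ is then needed only to get $D_*>0$.
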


\begin{proof}
The preceding theorem and the implicit-function theorem for the
singleton roots provide analytic branches
\(x_{\ell a}(\eps)=c_\ell+\eps\lambda_{\ell a}+O(\eps^2)\),
where \(\lambda_{\ell1},\ldots,\lambda_{\ell r_\ell}\) are
the distinct roots of \(Q_\ell\).  After shrinking the parameter
interval, these branches account for all \(n\) roots and are pairwise
distinct away from zero.  Within a cluster, each squared root gap is
\(\eps^2(\lambda_{\ell a}-\lambda_{\ell b}+O(\eps))^2\).
Between two clusters, it tends to \((c_\ell-c_j)^2\).  Multiplying
the within-cluster gaps gives the exponent \(M\) and the product of
the discriminants of the \(Q_\ell\).  There are \(r_\ell r_j\)
gaps between clusters \(\ell,j\), giving the remaining product in
\(D_*\).

Thus \(\eps^{-M}\operatorname{Disc}(h_\eps)\) extends analytically
to zero with value \(D_*>0\).  Since the discriminant itself is a
polynomial in \(\eps\), this says that it is divisible by
\(\eps^M\) in \(\R[\eps]\), and that the quotient has the
stated nonzero constant term.  The final assertion follows from
\Cref{lem:affine-tangent-simplicity}.
\end{proof}

The discriminant factorization separates the two contributions to entropy. Its logarithmic divergence counts the proportion of root pairs that collide, while its finite part also retains the distances between different clusters. For Fisher information, only the shrinking gaps contribute to the leading term, and their separation velocities determine its coefficient.

\begin{corollary}[Localization of information]
\label{cor:collision-information}
Under the hypotheses of \Cref{prop:collision-discriminant}, set
\(\theta=M/[n(n-1)]\).  Then
\begin{align}
 \chi_n[h_\eps]
   &=\theta\log|\eps|+
       \frac{\log D_*}{n(n-1)}+O(|\eps|),
       \label{eq:collision-entropy-localization}\\
 \cN_n(h_\eps)
   &=|\eps|^{2\theta}D_*^{\,2/[n(n-1)]}
           \bigl(1+O(|\eps|)\bigr).
       \label{eq:collision-entropy-power-localization}
\end{align}
Both \(\chi_n[h_\eps]-\theta\log|\eps|\) and
\(|\eps|^{-2\theta}\cN_n(h_\eps)\) extend real analytically
through zero.  The entropy's constant term may equivalently be written
\begin{equation}\label{eq:collision-entropy-constant}
 \frac{\log D_*}{n(n-1)}
 =\sum_{r_\ell\geq2}\frac{r_\ell(r_\ell-1)}{n(n-1)}
                      \chi_{r_\ell}[Q_\ell]
   +\frac{2}{n(n-1)}
       \sum_{\ell<j}r_\ell r_j\log|c_\ell-c_j|.
\end{equation}
Moreover, \(\eps^2\Phi_n(h_\eps)\) extends real analytically
through zero, with value
\begin{equation}\label{eq:collision-fisher-localization}
 K=\frac{4}{n(n-1)^2}
       \sum_{\ell=1}^{\kappa}\|s(\lambda_\ell)\|_2^2
   =\sum_{r_\ell\geq2}
       \frac{r_\ell(r_\ell-1)^2}{n(n-1)^2}
                           \Phi_{r_\ell}(Q_\ell),
\end{equation}
where \(\lambda_\ell\) is the root vector of \(Q_\ell\), and the
score of a singleton is the zero vector.  If at least one root
of \(h_0\) is repeated, then \(K>0\),
\(\Phi_n(h_\eps)=K\eps^{-2}+O(|\eps|^{-1})\), and
\(\Phi_n(h_\eps)^{-1}=K^{-1}\eps^2+O(|\eps|^3)\).
\end{corollary}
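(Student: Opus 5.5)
The plan is to read everything off the discriminant factorization of \Cref{prop:collision-discriminant} and the analytic branches used in its proof. I take the branches \(x_{\ell a}(\eps)=c_\ell+\eps\lambda_{\ell a}+O(\eps^2)\), real analytic near \(\eps=0\), which account for all \(n\) roots and are distinct for small nonzero \(\eps\).

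For the entropy, observe that \(\sum_{i<j}\log|x_i-x_j|=\tfrac12\log|\operatorname{Disc}(h_\eps)|\), because \(h_\eps\) is monic. The entropy is therefore
\[
\chi_n[h_\eps]=\frac{1}{n(n-1)}\log|\operatorname{Disc}(h_\eps)|.
\]
Substituting \(\operatorname{Disc}(h_\eps)=\eps^M d_{\mathrm{reg}}(\eps)\) with \(d_{\mathrm{reg}}(0)=D_*>0\) gives
\[
\chi_n[h_\eps]-\theta\log|\eps|=\frac{\log d_{\mathrm{reg}}(\eps)}{n(n-1)}.
\]
This is real analytic near zero, and its value there is \(\log D_*/[n(n-1)]\) with an \(O(|\eps|)\) remainder. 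Exponentiating yields the entropy-power formula, and \(|\eps|^{-2\theta}\cN_n(h_\eps)=d_{\mathrm{reg}}(\eps)^{2/[n(n-1)]}\) is analytic.

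For the constant \eqref{eq:collision-entropy-constant}, I expand \(\log D_*\) from its definition. Each factor \(\operatorname{Disc}(Q_\ell)\) with \(r_\ell\geq2\) satisfies
\[
\log\operatorname{Disc}(Q_\ell)=r_\ell(r_\ell-1)\,\chi_{r_\ell}[Q_\ell].
\]
This uses the normalization \(2/(r(r-1))\) in \eqref{eq:intro-entropy} together with the doubling from squared gaps. Singletons contribute \(\operatorname{Disc}=1\). The cross terms give \(2r_\ell r_j\log|c_\ell-c_j|\), and dividing by \(n(n-1)\) gives the stated identity.

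For the Fisher information I work with scores on the branches. For a root in cluster \(\ell\), the score splits into two sums. The within-cluster sum is
\[
\eps^{-1}\sum_{b\ne a}\bigl(\lambda_{\ell a}-\lambda_{\ell b}+O(\eps)\bigr)^{-1},
\]
and the between-cluster sum is analytic and bounded. Hence \(\eps s(x(\eps))\) extends real analytically through zero, with value the block vector \(\oplus_\ell s(\lambda_\ell)\). Squaring and summing shows that \(\eps^2\Phi_n(h_\eps)\) is analytic with value \(K\). The second expression for \(K\) follows by rewriting
\[
\|s(\lambda_\ell)\|^2=\frac{r_\ell(r_\ell-1)^2}{4}\Phi_{r_\ell}(Q_\ell),
\]
and singletons drop out.

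If some \(r_\ell\geq2\), then \(Q_\ell\) is simple with \(r_\ell\geq2\) roots. The pairing identity \eqref{eq:radial-pairing} gives \(\langle\lambda_\ell^\circ,s(\lambda_\ell)\rangle=\binom{r_\ell}{2}>0\), so \(s(\lambda_\ell)\ne0\) and \(K>0\). Analyticity then gives \(\Phi_n=K\eps^{-2}+O(|\eps|^{-1})\). Inverting \(\eps^2\Phi_n=K+O(\eps)\) gives the reciprocal expansion.

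The only delicate point is justifying that the branches exist uniformly and that the between-cluster terms are analytic at \(\eps=0\). I will take this directly from the proof of \Cref{prop:collision-discriminant}: the branches are distinct there for \(\eps\neq0\), and the between-cluster gaps stay bounded away from zero.
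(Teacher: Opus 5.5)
Your proposal is correct and follows the paper's proof essentially step for step. The entropy and entropy-power statements come from taking logarithms of \(\operatorname{Disc}(h_\eps)=\eps^M d_{\mathrm{reg}}(\eps)\), and the constant comes from expanding \(\log D_*\); the Fisher coefficient comes from multiplying the branch scores by \(\eps\) so that only within-cluster terms survive, with \(K>0\) from the radial pairing identity.
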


\begin{proof}
For a monic degree-\(n\) polynomial with simple real roots,
\(\chi_n=(n(n-1))^{-1}\log\operatorname{Disc}\).  The integer
\(M\) is even, and \(d_{\mathrm{reg}}\) from
\Cref{prop:collision-discriminant} is positive on an interval about
zero.  Taking the logarithm of \eqref{eq:collision-discriminant-factorization} proves \eqref{eq:collision-entropy-localization} and the analytic extension of its regular part. Exponentiating twice the regular part of the entropy gives the entropy-power expansion and shows that the normalized quantity \(|\varepsilon|^{-2\theta}\mathcal N_n(h_\varepsilon)\) extends analytically through zero. Taking the logarithm of the product defining \(D_*\) gives \eqref{eq:collision-entropy-constant}. This also explains why singleton clusters remain in the second sum even though they do not appear in the first.

Use the analytic branches from the preceding proof.  Each within-cluster
gap has the form \(\eps d_{\ell ab}(\eps)\), where \(d_{\ell ab}\) is
analytic with \(d_{\ell ab}(0)=\lambda_{\ell a}-\lambda_{\ell b}\ne0\),
while each gap between different clusters is analytic and already nonzero
at zero.  Now fix an output root \(x_{\ell a}(\eps)\) and multiply its
score by \(\eps\).  The terms coming from its own cluster become
reciprocals of analytic functions that do not vanish at zero, with limits
\((\lambda_{\ell a}-\lambda_{\ell b})^{-1}\), whereas the terms coming from the other clusters were bounded before the multiplication and so vanish after it. This is the whole of the localization where the score of a colliding root blows up at rate \(\eps^{-1}\), and only its own cluster is fast enough to be visible at that rate.  Hence \(\eps s(x(\eps))_{\ell a}\) extends analytically through zero with value \(s(\lambda_\ell)_a\).
Squaring these scaled score components and summing over all output roots, with the normalization in \(\eqref{eq:intro-fisher}\), gives the first expression for \(K\).  Replacing each cluster score norm by its degree-\(r_\ell\) Fisher information gives the second expression.

For a cluster of size \(r\geq2\), the identity
\(\sum_a\lambda_a s(\lambda)_a=\binom r2\) shows that its
score vector is nonzero.  Thus \(K>0\) precisely when some cluster
has size at least two.  In that case the analytic function \(\eps^2\Phi_n(h_\eps)\) has value \(K>0\) at zero and its Taylor expansion and its reciprocal give the final estimates.
\end{proof}

The Fisher weights in \eqref{eq:collision-fisher-localization} differ
from the entropy weights in \eqref{eq:collision-entropy-constant}
because Fisher information uses the degree-dependent score normalization
\(2/(n-1)\).  
The two quantities behave differently at a collision, and the difference is not a matter of normalization. Fisher information localizes by
\eqref{eq:collision-fisher-localization} where its leading coefficient is
assembled from the tangent polynomials alone, so a cluster that does not
collide contributes nothing to it and the positions of the clusters do not
enter.  On the other hand, entropy does not localize.  Its divergence rate \(\theta\) counts only colliding pairs, but the finite part
\eqref{eq:collision-entropy-constant} also carries the distances
\(|c_\ell-c_j|\) between distinct clusters, and therefore cannot be
recovered from the colliding groups alone. The example below exhibits both
halves of this: a surviving root that shifts the entropy constant while
contributing nothing to the Fisher coefficient.  What does come down to
normalization is the remaining discrepancy between the two sets of weights, since Fisher information carries the degree-dependent factor \(2/(n-1)\) that entropy does not.

\begin{example}[A double collision with a surviving root]
\label{ex:cubic-collision-geometry}
Let \(n=3\), and take
\(p_\eps=(x^2-\eps^2)(x-2)\) and
\(q_\eps=x^3-4\eps^2x\).  At zero the first polynomial has a
double root with velocities \(-1,1\), and the second has a triple
root with velocities \(-2,0,2\).  Their output collision therefore
has multiplicity two.  Its tangent polynomial is
\[
 Q(z)=(z^2-1)\boxplus_2
             \left(\frac13(z^3-4z)'\right)
      =(z^2-1)\boxplus_2(z^2-4/3)=z^2-7/3.
\]
The remaining input root gives \(c_{\mathrm{tan}}=-2\).  The coefficient formula
for convolution yields
\begin{equation}\label{eq:cubic-collision-geometry-polynomial}
 h_\eps(x)=x^3-2x^2-5\eps^2x+\frac{14}{3}\eps^2,
 \qquad
 \eps^{-2}h_\eps(\eps z)
       =-2(z^2-7/3)+\eps(z^3-5z).
\end{equation}
Thus the branches at zero are
\(x_\pm(\eps)=\pm\sqrt{7/3}\,\eps-\frac23\eps^2+O(\eps^3)\),
while the third root tends to \(2\).

There are two clusters, of sizes two and one. The tangent discriminant
is \(28/3\), and the four powers of the distance to the singleton
give \(D_*=(28/3)2^4=448/3\) (There are two gaps from the colliding pair to the singleton, and each tends to \(2\). Since each gap is squared in the discriminant, together they contribute the factor \(2^4\)).  Direct computation checks this
constant and also gives the normalized Fisher information:
\begin{align}
 \operatorname{Disc}(h_\eps)
   &=\frac43\eps^2(375\eps^4+264\eps^2+112),
       \label{eq:cubic-collision-geometry-discriminant}\\
 \Phi_3(h_\eps)
   &=\frac{(15\eps^2+4)^2}
          {2\eps^2(375\eps^4+264\eps^2+112)}.
       \label{eq:cubic-collision-geometry-fisher}
\end{align}
To verify the Fisher formula, we center the cubic. Translation preserves both the root gaps and the score. Let \(t_1,t_2,t_3\) be the roots of a
centered cubic \(f=x^3+Ax+B\).  Multiplying
\(\sum_{i<j}(t_i-t_j)^{-2}\) by \(\operatorname{Disc}(f)\)
gives \(\sum_k f'(t_k)^2=9A^2\), by Vieta's identities
\(\sum_k t_k^2=-2A\) and \(\sum_k t_k^4=2A^2\).
Expanding the score norm and cancelling the cross terms in triples gives
\(\|s(t)\|_2^2=2\sum_{i<j}(t_i-t_j)^{-2}\).  Consequently,
\(\Phi_3(f)=6A^2/\operatorname{Disc}(f)\).  Translating
\eqref{eq:cubic-collision-geometry-polynomial} to its mean gives
\(A=-4/3-5\eps^2\). Combining this Fisher identity with the discriminant expansion above gives the information asymptotics
\begin{align*}
 \chi_3[h_\eps]
   &=\frac13\log|\eps|+\frac16\log(448/3)+O(\eps^2),\\
 \cN_3(h_\eps)
   &=(448/3)^{1/3}|\eps|^{2/3}\bigl(1+O(\eps^2)\bigr),
 \qquad
 \Phi_3(h_\eps)\sim\frac1{14\eps^2}.
\end{align*}
The singleton root affects the entropy constant even though it does not
contribute to the leading Fisher term.  The exponent \(2/3\) in
entropy power records that one of the three pairs of output roots
collides.
\end{example}

For a finite free curve through simple input configurations, Fujie’s theorem gives a simple output at the base parameter. Its discriminant is therefore not identically zero, so \Cref{lem:affine-tangent-simplicity} gives distinct tangent roots at every real collision. We can now use these local descriptions to study the real singularities of the whole curve and the root permutations induced by projective continuation.

\section{The real geometry of finite free curves}
\label{sec:collision-geometry}
The tangent formula explains how a multiple output root separates.  We now
ask where on a finite free curve such a collision can occur at all, and how the branches fit together once it has. A plane curve of degree \(n\) has no reason to be well behaved in either respect. Its singular fibers may be many, its singularities may be complicated, and its real branches may be distributed almost arbitrarily. Finite free curves are not like that. For affine input trajectories through simple configurations there is at most one real collision parameter, whatever the degree, every real singularity is an ordinary totally real multiple point, and the covering of the real projective line obtained by normalizing the curve is computed from the ordered collision multiplicities and from nothing else. The rigidity comes from the atom theorem, which is a statement about multiplicities of a convolution and carries no geometric content at all until it is put on a pencil.

Throughout this section, \(n\geq2\) and \(\alpha,\beta\in\R^n\)
are fixed simple root vectors.  For \(w=u\oplus v\), write
\[
 h_w(x,y,t)=\mathcal F_n(x,y\alpha+tu,y\beta+tv),
 \qquad C_w=\{h_w=0\}\subset\mathbf P^2_{\R}.
\]
We retain the split subspace
\begin{equation}\label{eq:global-collision-split-space}
 L_{\alpha,\beta}
 =\{(a\one+c\alpha)\oplus(b\one+c\beta):a,b,c\in\R\}.
\end{equation}
Using \(\alpha^\circ,\beta^\circ\) in this formula gives the same
subspace, since their means can be absorbed into \(a,b\).  By
\Cref{thm:intro-split-locus}, this is precisely the locus of completely
split curves.

\subsection{Where real singularities can occur}

\Cref{lem:unique-real-collision-fiber}, proved on the way to the split
classification, already places every real collision over a single
parameter, so there is only one fiber to examine.  What remains is to
identify the branches through it, and they turn out to be as simple as
branches can be. 
We use the term \emph{ordinary \(r\)-fold point} for a plane-curve
singularity whose lowest-degree homogeneous term is a product of \(r\)
distinct linear forms.  It is \emph{totally real} when these forms can all
be chosen over \(\R\).  The next theorem identifies the tangent forms,
as well as the singularity type.

The base fiber at \([1:0]\) is simple, so every singular real fiber has the form \([-c:1]\). Near such a fiber, we work in the chart \(t=1\) and use the coordinate \(Y=y+c\). The input roots then become \(u_i-c\alpha_i+Y\alpha_i\) and \(v_j-c\beta_j+Y\beta_j\). Thus the original root coordinates \(\alpha_i,\beta_j\) are the velocities in this local description. This explains their appearance in the tangent convolution below.

\begin{theorem}[Ordinary real singularities]
\label{thm:ordinary-real-singularities}
Every real singular point of \(C_w\) is an ordinary, totally real
multiple point.  All these points lie over the same real projective
parameter, if any exist.

More precisely, let \([-c:1]\) be a parameter at which an output root
\(a+b\) has multiplicity \(r\geq2\).  Let \(I,J\) be the maximal
input blocks at \(a,b\), respectively, for the configurations
\(u-c\alpha\) and \(v-c\beta\).  Put \(m=|I|\) and \(k=|J|\),
so that \(r=m+k-n\).  Define the monic degree-\(r\) polynomial
\begin{equation}\label{eq:curve-collision-tangent-polynomial}
 Q_{I,J}(z)=
 \left(\frac{r!}{m!}\frac{d^{m-r}}{dz^{m-r}}P_{\alpha_I}(z)\right)
 \boxplus_r
 \left(\frac{r!}{k!}\frac{d^{k-r}}{dz^{k-r}}P_{\beta_J}(z)\right),
\end{equation}
where \(\alpha_I=(\alpha_i)_{i\in I}\) and similarly for
\(\beta_J\).  Its roots \(\zeta_1<\cdots<\zeta_r\) are real and
distinct.  In the chart \(t=1\), the tangent cone at
\([a+b:-c:1]\) is a nonzero scalar multiple of
\(\prod_{\ell=1}^r(X-\zeta_\ell Y)\), where
\(X=x-a-b\) and \(Y=y+c\).
\end{theorem}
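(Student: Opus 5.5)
The plan is to reduce everything to the tangent convolution (\Cref{thm:tangent-convolution}) applied in a local affine chart around the singular fiber. First, a real singular point of \(C_w\) is a real point \([x_0:y_0:t_0]\) where \(h_w\) and its gradient vanish. In particular, \(x_0\) is a repeated \(x\)-root of the fiber polynomial \(h_w(\cdot,y_0,t_0)\): the \(x\)-partial vanishes, and \(h_w\) is monic in \(x\), so the point cannot lie at \(x\)-infinity. Hence every real singular point lies over a real parameter at which some output root is repeated. \Cref{lem:unique-real-collision-fiber} then puts all of them over a single \([y_0:t_0]\). That fiber is not \([1:0]\), since the output is simple there by Fujie's theorem, so it has the form \([-c:1]\). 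This gives the second sentence of the statement.

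Next I pass to the chart \(t=1\) with \(Y=y+c\). There the inputs are the affine families \((u-c\alpha)+Y\alpha\) and \((v-c\beta)+Y\beta\), and \(h_w(x,y,1)=h_Y(x)\) is exactly the family \(h_\eps\) of \Cref{sec:collision-local}, with \(\eps=Y\), base configurations \(u-c\alpha\), \(v-c\beta\), and velocities \(\alpha,\beta\). The singular point is \([a+b:-c:1]\), where \(a+b\) is an output root of multiplicity \(r\geq2\). Fujie's theorem identifies the input blocks \(I,J\) with \(m+k-n=r\); the pair is unique, by the counting remark before \Cref{prop:collision-discriminant}. \Cref{thm:tangent-convolution} then gives, with \(X=x-a-b\),
\[
 h_Y(a+b+Yz)=Y^r\bigl(c_{\mathrm{tan}}Q_{I,J}(z)+Y R(z,Y)\bigr),\qquad c_{\mathrm{tan}}\neq0,
\]
where \(U=P_{\alpha_I}\) and \(V=P_{\beta_J}\), so \(Q=Q_{I,J}\) as in the statement.

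The key step is to turn this expansion into the tangent cone. Write \(F(X,Y)=h(a+b+X,Y)=\sum_d F_d(X,Y)\) in homogeneous components. Substituting \(X=Yz\) gives \(\sum_d Y^dF_d(z,1)\). Comparison with the expansion shows \(F_d(z,1)\equiv0\) for \(d<r\) and \(F_r(z,1)=c_{\mathrm{tan}}Q(z)\). There is a subtlety in passing from the vanishing of \(F_d(z,1)\) to \(F_d=0\): a homogeneous form vanishing on the affine line \(Y=1\) could only be a multiple of \(Y\) restricted there, and such a restriction is not identically zero. So \(F_d(z,1)\equiv0\) forces \(F_d=0\). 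Moreover, \(F\) is monic of degree \(n\) in \(X\), while \(h_0(a+b+X)\) has \(X^r\) as its lowest term with coefficient \(c_{\mathrm{tan}}\). Hence \(F_r\) contains \(c_{\mathrm{tan}}X^r\), so \(F_r\) has full degree \(r\) in \(X\) and \(F_r=c_{\mathrm{tan}}\prod_\ell(X-\zeta_\ell Y)\). The multiplicity of the point is therefore exactly \(r\), and the tangent cone is as claimed.

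The remaining ingredient, which I expect to be the main obstacle, is that the roots \(\zeta_\ell\) are real and distinct. Reality follows from Rolle's theorem, because \(\alpha_I,\beta_J\) are real, together with Walsh's theorem. For distinctness I apply \Cref{lem:affine-tangent-simplicity} to this affine family. Its hypothesis, that \(\operatorname{Disc}_x h_Y\) is not identically zero in \(Y\), needs checking in this chart. I would argue that the discriminant of \(h_w(x,y,1)\) is a polynomial whose homogenization in \((y,t)\) is nonzero at \([1:0]\), since the base output is simple. A dehomogenization of a nonzero binary form is nonzero, so the hypothesis holds. Distinct real tangent lines give an ordinary, totally real \(r\)-fold point, which completes the proof.
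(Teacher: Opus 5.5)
Your proposal is correct and takes essentially the same route as the paper. You reduce to the unique fiber $[-c:1]$, read the chart $t=1$ as an affine family with base configurations $u-c\alpha$, $v-c\beta$ and velocities $\alpha,\beta$, and extract the initial form $c_{\mathrm{tan}}Y^rQ_{I,J}(X/Y)$ from \Cref{thm:tangent-convolution}. The paper gets distinctness of the $\zeta_\ell$ directly: $\alpha_I,\beta_J$ are simple, Rolle makes their normalized derivatives simple, and Fujie makes their convolution simple. You instead use \Cref{lem:affine-tangent-simplicity} with the nonvanishing discriminant; both work, and your homogeneous-component comparison is a slightly more explicit version of the paper's passage to the tangent cone.
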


\begin{proof}
The fiber at \([1:0]\) is simple, so any repeated real fiber has
\(t\ne0\) and can be written as \([-c:1]\).  Every repeated output
root is an atom sum by Fujie's theorem, with the multiplicities stated
above.  In the local parameter \(Y=y+c\), the roots in the two input
blocks are exactly \(a+Y\alpha_i\) and \(b+Y\beta_j\).
\Cref{thm:tangent-convolution} therefore gives
\begin{equation}\label{eq:curve-collision-initial-form}
 h_w(a+b+X,-c+Y,1)
 =c_{\mathrm{tan},0}Y^rQ_{I,J}(X/Y)
   +\text{terms of total degree at least }r+1,
\end{equation}
with
\begin{equation}\label{eq:curve-collision-leading-constant}
 c_{\mathrm{tan},0}=\frac{m!k!}{n!r!}
 \prod_{i\notin I}(a-u_i+c\alpha_i)
 \prod_{j\notin J}(b-v_j+c\beta_j).
\end{equation}
Maximality of the blocks makes every factor in these products nonzero.
At \(Y=0\), we use the homogeneous polynomial extension of the
tangent expression.

Both \(P_{\alpha_I}\) and \(P_{\beta_J}\) have simple real roots.
Their normalized derivatives in
\eqref{eq:curve-collision-tangent-polynomial} are simple by Rolle's
theorem, and their convolution is simple by Fujie's regularity theorem.
Thus the initial form in \eqref{eq:curve-collision-initial-form} has
\(r\) distinct real factors. Substitute \(X=Yz\) and divide the equation by \(Y^r\). The resulting equation is analytic at \(Y=0\), where it reduces to \(c_{\mathrm{tan},0}Q_{I,J}(z)=0\). Since every root \(\zeta_\ell\) is simple, the implicit-function theorem gives an analytic solution \(z_\ell(Y)\) with \(z_\ell(0)=\zeta_\ell\). Returning to the original coordinate \(X\) gives the local branches

\begin{equation}\label{eq:ordinary-collision-branches}
 X_\ell(Y)=Y\zeta_\ell+O(Y^2),\qquad 1\leq\ell\leq r.
\end{equation}
In particular, the point is singular and ordinary of multiplicity \(r\). Conversely, a singular point must have \(\partial_xh_w=0\), and hence
must be a repeated root of its fiber.  This accounts for all real
singular points. \Cref{lem:unique-real-collision-fiber} places them over
a single parameter. 
\end{proof}

The tangent polynomial in \eqref{eq:curve-collision-tangent-polynomial} depends only on the original roots in the two blocks.  The other velocity coordinates affect the higher-order terms and the nonzero factor \(c_{\mathrm{tan},0}\). Thus the local convolution of \Cref{sec:collision-local} also fixes the directions in which the branches meet.  In particular, the entropy and information limits in \Cref{cor:collision-information} are attached to explicit lower-degree convolutions at these points.

The cubic of \Cref{ex:cubic-collision-geometry} fits the present setting because both inputs are simple at \(\varepsilon=1\). We take that parameter as the base, while retaining \(\varepsilon=0\) as the location of the node. The figure shows how the two branches separate there. At the collision, the two root positions coincide, but their first-order velocities remain distinct. The rescaling \(z=x/\eps\) retains those velocities where after removing the factor \(\eps^2\), the equation extends to \(\eps=0\), where it becomes \(Q(z)=0\). Thus the two branches
pass through different points in the rescaled chart.  For a collision
of multiplicity \(r\), the same argument gives \(r\) distinct real
points, one for each root of the tangent convolution.

\begin{figure}[!htbp]
\centering
\begin{tikzpicture}[font=\small,>=stealth,
  rootplus/.style={blue!65!black,line width=1.1pt},
  rootminus/.style={orange!75!black,line width=1.1pt},
  axis/.style={gray,thin,->},
  tangent/.style={gray,densely dashed,thin}]
  % Exact parametrization of the strict transform, with s=z:
  % e(s)=2(s^2-7/3)/(s^3-5s), x(s)=s e(s).
  % The two parameter intervals avoid all poles of e(s).
  \pgfmathdeclarefunction{collisiontime}{1}{%
    \pgfmathparse{2*(#1*#1-7/3)/(#1*#1*#1-5*#1)}%
  }
  \node at (0,2.95) {(a) The two roots meet};
  \begin{scope}[x=7cm,y=3cm]
    \draw[axis] (-0.36,0) -- (0.37,0)
      node[right,text=black] {\(\eps\)};
    \draw[axis] (0,-0.79) -- (0,0.80)
      node[above,text=black] {\(x\)};
    \begin{scope}
      \clip (-0.34,-0.73) rectangle (0.34,0.73);
      \draw[tangent] (-0.36,{-0.36*sqrt(7/3)})
        -- (0.36,{0.36*sqrt(7/3)});
      \draw[tangent] (-0.36,{0.36*sqrt(7/3)})
        -- (0.36,{-0.36*sqrt(7/3)});
      \draw[rootplus] plot[variable=\s,domain=1.27:1.77,samples=121]
        ({collisiontime(\s)},{\s*collisiontime(\s)});
      \draw[rootminus] plot[variable=\s,domain=-1.77:-1.27,samples=121]
        ({collisiontime(\s)},{\s*collisiontime(\s)});
    \end{scope}
    \fill (0,0) circle[radius=2.1pt];
    \node[right,inner sep=3pt] at (0,-0.15) {\(0\)};
    \node[text=blue!65!black] at (0.25,0.53) {\(x_+(\eps)\)};
    \node[text=orange!75!black] at (0.25,-0.67) {\(x_-(\eps)\)};
  \end{scope}
  \draw[->,semithick] (4.30,0.65) -- (3.15,0.65)
    node[midway,above] {\(\pi\)};
  \node at (7.4,2.95) {(b) Their directions separate};
  \begin{scope}[shift={(7.4,0)},x=7cm,y=1.10cm]
    \draw[axis] (-0.36,0) -- (0.37,0)
      node[right,text=black] {\(\eps\)};
    \draw[axis,densely dashed] (0,-2.15) -- (0,2.18)
      node[above,text=black] {\(z\)};
    \begin{scope}
      \clip (-0.34,-2.08) rectangle (0.34,2.08);
      \draw[rootplus] plot[variable=\s,domain=1.27:1.77,samples=121]
        ({collisiontime(\s)},\s);
      \draw[rootminus] plot[variable=\s,domain=-1.77:-1.27,samples=121]
        ({collisiontime(\s)},\s);
    \end{scope}
    \fill[blue!65!black] (0,{sqrt(7/3)}) circle[radius=2.1pt];
    \fill[orange!75!black] (0,{-sqrt(7/3)}) circle[radius=2.1pt];
    \node[above right,inner sep=4pt] at (0,{sqrt(7/3)})
      {\(\sqrt{7/3}\)};
    \node[below left,inner sep=4pt] at (0,{-sqrt(7/3)})
      {\(-\sqrt{7/3}\)};
    \node[below right,inner sep=3pt] at (0,0) {\(0\)};
    \node[below] at (0,-2.20) {\(\eps=0\)};
  \end{scope}
\end{tikzpicture}
\caption{The ordinary real node of
\Cref{ex:cubic-collision-geometry}, illustrating
\Cref{thm:ordinary-real-singularities}.  The left panel shows the two
analytic root branches and their dashed tangent lines.  Each branch
keeps its colour through the crossing, while their order reverses.
In the rescaled chart on the right, the marked points are the two roots
of the tangent convolution \(Q(z)=z^2-7/3\).  The map \(\pi(\eps,z)=(\eps,\eps z)\) sends both points to the node. The curves are drawn from the exact cubic equation and the third root, which tends to \(2\), is outside this local picture.}
\label{fig:ordinary-node-blowup}
\end{figure}

An output root of multiplicity at least \(r\) requires two input atoms
containing at least \(n+r\) labels between them, so at most \(n-r\) of the
\(2n\) input labels can remain outside those atoms. On the blocks that do
collide, the velocities are forced to agree with two translations and one
common dilation and only the few exceptional coordinates are free to depart from that motion. The condition is therefore linear, which is the point that the singularity locus can be written down directly in the space of input directions, instead of being found by eliminating the roots.

\begin{theorem}[The real collision arrangement]
\label{thm:collision-arrangement}
For every \(2\leq r\leq n\),
\begin{equation}\label{eq:collision-arrangement}
 \Sigma_r=
 \bigcup_{|S|=n-r}\bigl(L_{\alpha,\beta}+\R^S\bigr).
\end{equation}
The \(\binom{2n}{n-r}\) subspaces in this union are distinct and have
dimension \(n-r+3\).  They are precisely the irreducible components of
\(\Sigma_r\) as a reduced real algebraic set.  In particular,
\(\Sigma_2\) has codimension \(n-1\), and
\(\Sigma_n=L_{\alpha,\beta}\).
\end{theorem}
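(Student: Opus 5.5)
We prove the two inclusions separately, working with the characterization of a collision of multiplicity at least \(r\) supplied by Fujie's theorem. Recall that \(\Sigma_r\) consists of those \(w=u\oplus v\) for which some real projective fiber of \(h_w\) has a root of multiplicity at least \(r\). The base fiber \([1:0]\) is simple. Any such fiber is therefore \([-c:1]\), and by Fujie's theorem there are values \(a,b\) and blocks \(I,J\) with \(u_i-c\alpha_i=a\) for \(i\in I\) and \(v_j-c\beta_j=b\) for \(j\in J\), where \(|I|+|J|\geq n+r\). Put \(S\) for the complement of \(I\sqcup J\) in the \(2n\) labels, so \(|S|\leq n-r\). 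On \(I\sqcup J\) the direction \(w\) then agrees with \((a\one-(-c)\alpha)\oplus(b\one-(-c)\beta)\in L_{\alpha,\beta}\), taking the dilation coefficient to be \(c\). Hence \(w\in L_{\alpha,\beta}+\R^S\), and enlarging \(S\) to size exactly \(n-r\) gives the inclusion \(\subseteq\).

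For the converse, take \(w=\ell+\sum_{s\in S}x_se_s\) with \(\ell=(a\one+c\alpha)\oplus(b\one+c\beta)\) and \(|S|=n-r\). At the parameter \([-c:1]\), every coordinate outside \(S\) of \(u-c\alpha\) equals \(a\), and every such coordinate of \(v-c\beta\) equals \(b\). The two blocks therefore have total size at least \(2n-(n-r)=n+r\), and Fujie's theorem produces an output root of multiplicity at least \(r\) in that fiber.

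Two points need care. The first is that \(c\) may be the dilation coefficient where the fiber sits, possibly at infinity. When \(c\) is replaced by the projective parameter, the case of a fiber at \([0:1]\) corresponds to pure translations and is covered by \(c=0\) in the chart \(t=1\). The second is that the paper's \(\Sigma_r\) uses real fibers only, and this is automatic here because everything constructed is real.

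Next we settle dimension and distinctness. The vectors \(\one\oplus0\), \(0\oplus\one\), \(\alpha\oplus\beta\) and \(e_s\) for \(s\in S\) are independent. To see this, project onto the coordinates outside \(S\). Since \(|S|\leq n-2\), each input retains at least two coordinates on which \(\alpha\), being simple, is nonconstant. The same holds for \(\beta\) unless all the retained coordinates lie in one input. Even then, that input retains at least \(n+2\) labels' worth of constraint, hence \(\geq 2\) coordinates, which separates \(\one\) from \(\alpha\). So the dimension is \(3+(n-r)\).

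For distinctness, suppose \(L+\R^S=L+\R^{S'}\) with \(s\in S\setminus S'\). Then \(e_s\in L+\R^{S'}\). Comparing coordinates outside \(S'\cup\{s\}\), the \(L\)-component must vanish on the \(\geq n+r-1\geq n+1\) remaining labels. These labels include at least two coordinates of one input, on which the vector \(a\one+c\alpha\) vanishes. Simplicity of \(\alpha\) then forces \(a=c=0\), and likewise \(b=0\) whenever that input also meets the remaining labels. Hence the \(L\)-part vanishes at \(s\) itself, a contradiction. I expect this bookkeeping, together with making sure the exceptional subcases (one input entirely inside \(S'\)) are genuinely excluded by \(|S'|\leq n-2\), to be the main fiddly step.

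Finally, the irreducible components follow. Each subspace is irreducible, they all have the same dimension, and they are pairwise distinct, so none contains another. Hence they are exactly the components of the reduced union. The case \(r=n\) gives \(S=\emptyset\) and \(\Sigma_n=L_{\alpha,\beta}\). For \(r=2\) the dimension is \(n+1\), which is codimension \(n-1\) in \(\R^{2n}\).
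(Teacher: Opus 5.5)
Your proof is correct and follows the paper's argument. Both inclusions come from Fujie's atom theorem at the single fiber \([-c:1]\), and dimension and distinctness come from a support count on \(L_{\alpha,\beta}\); the paper packages that count as the single bound that every nonzero vector of \(L_{\alpha,\beta}\) has at least \(n\) nonzero coordinates. The digression in your dimension paragraph about all retained coordinates lying in one input (and the sentence about ``\(n+2\) labels' worth of constraint'') is vacuous and should be deleted. Since \(|S|\leq n-2\), at least two labels of each input lie outside \(S\). Likewise, in the distinctness step \(|S'\cup\{s\}|=n-r+1\leq n-1\) leaves at least one label of each input, which settles your ``whenever'' clause.
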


\begin{proof}
Suppose first that \(w\in L_{\alpha,\beta}+\R^S\), where
\(|S|=n-r\).  Outside the exceptional labels in \(S\), write
\(u_i=a+c\alpha_i\) and \(v_j=b+c\beta_j\).  At
\([-c:1]\), these labels give atoms of multiplicities at least
\(m=n-|S\cap\{\text{first-input labels}\}|\) and
\(k=n-|S\cap\{\text{second-input labels}\}|\).  Since
\(m+k=n+r\), their sum is an output root of multiplicity at least
\(r\).  Thus \(w\in\Sigma_r\).

Conversely, take a repeated root of multiplicity \(\rho\geq r\).
As in \Cref{thm:ordinary-real-singularities}, its parameter is
\([-c:1]\), and its input blocks satisfy
\(u_i=a+c\alpha_i\) on \(I\) and
\(v_j=b+c\beta_j\) on \(J\), with
\(|I|+|J|=n+\rho\).  There are \(n-\rho\leq n-r\)
exceptional coordinates outside these blocks.  Enlarge their set to a
set \(S\) of size \(n-r\).  Then
\(w\in L_{\alpha,\beta}+\R^S\), proving the equality of sets.

It remains to check that the displayed subspaces have the asserted
dimensions and are distinct.  A nonzero vector in \(L_{\alpha,\beta}\)
has at least \(n\) nonzero coordinates.  Indeed, when \(c=0\), a
nonzero constant block already has \(n\) entries.  When \(c\ne0\),
simplicity allows at most one zero in each block, so the support has size
at least \(2n-2\geq n\).  Since \(|S|\leq n-2\), it follows that
\(L_{\alpha,\beta}\cap\R^S=0\).  The dimension is therefore
\(3+|S|=n-r+3\).

If two subspaces for distinct sets \(S,T\) were equal, choose
\(i\in S\setminus T\).  An expression
\(e_i=\ell+z\), with \(\ell\in L_{\alpha,\beta}\) and
\(z\in\R^T\), has \(\ell\ne0\), since \(i\notin T\).
It would therefore give a nonzero vector in \(L_{\alpha,\beta}\)
supported on at most \(|T|+1\leq n-1\) coordinates.  This contradicts the support bound.
The subspaces are distinct, and none contains another because their
dimensions agree.  This proves the assertion about irreducible
components.
\end{proof}

\Cref{thm:ordinary-real-singularities,thm:collision-arrangement} together
prove \Cref{thm:intro-real-geometry} where the first says what the real
singularities look like, the second says exactly which directions produce
them. The classification is set-theoretic and over \(\R\) where the locus \(\Sigma_2\) records the real singular points of the curves, and it says nothing about singular points lying over non-real parameters, and \eqref{eq:collision-arrangement} is not a scheme-theoretic identity.  What the stratification does give is a picture with a fixed exchange rate. Each step down in the required output multiplicity buys exactly one further input-root coordinate that may leave the common affine motion, and the chain terminates at \(\Sigma_n=L_{\alpha,\beta}\), the split locus from the equality proof. The equality problem of \Cref{sec:information-equality} therefore sits at the very top of a stratification whose lower strata occupy the rest of this section.

\begin{example}[A reducible curve without real singular points]
\label{ex:reducible-real-smooth-finite-free-curve}
Take \(\alpha=\beta=(-1,0,1)\), \(u=\alpha\), and \(v=0\).
The two input polynomials are odd cubics, and their convolution is
\[
 h_w(x,y,t)=x\bigl(x^2-(y+t)^2-y^2\bigr).
\]
The quadratic form \((y+t)^2+y^2\) is positive at every nonzero real
parameter, so every real fiber has three distinct roots.  The curve is
nevertheless reducible.  Its line and conic intersect at two non-real points,
where \(x=0\) and \((y+t)^2+y^2=0\).  Thus the complement of
\(\Sigma_2\) contains curves with complex singularities and curves
with more than one irreducible component.
\end{example}

\subsection{Following the real branches}

At an ordinary multiple point, normalization separates the branches \(X_\ell(Y)=Y\zeta_\ell+O(Y^2)\). On each branch, \(Y\) remains a local coordinate, so projection to the parameter line is unramified even above the singular fiber. The real normalization therefore forms a covering of the real projective line. We determine its connected components by following the roots through one circuit of that line.
The projection to \([y:t]\) then becomes a covering of real circles,
even at the collision parameter.  In the terminology of
\cite{KummerShamovich20}, it is a \emph{real-fibered morphism}: a
complex point has real image if and only if it is real.  On each
irreducible normalized component, it is a separating morphism as studied
in~\cite{KummerShaw20}.  The degrees of its real connected components
can be read from the ordered collision multiplicities.

If a singular real fiber exists, list its distinct output roots in
increasing order in a real representative of the parameter and write
\((r_1,\ldots,r_{\kappa})\) for their multiplicities, including the
singletons.  Put \(N_j=r_1+\cdots+r_j\), with \(N_0=0\), and define
two permutations of \(\{1,\ldots,n\}\) by
\begin{equation}\label{eq:collision-reversal-permutations}
 \sigma_{\mathrm{rev}}(i)=n+1-i,
 \qquad
 \sigma_{\mathrm{blk}}(i)=N_{j-1}+N_j+1-i
 \quad\text{if }N_{j-1}<i\leq N_j.
\end{equation}
Thus \(\sigma_{\mathrm{rev}}\) reverses the whole list and \(\sigma_{\mathrm{blk}}\) reverses each collision
block.  If no singular real fiber exists, take \(\sigma_{\mathrm{blk}}\) to be the identity.

\begin{theorem}[The real normalization covering]
\label{thm:real-monodromy}
The curve \(C_w\) is geometrically reduced.  Let
\(\nu:\widetilde C_w\to C_w\) be its normalization, with disjoint
components when \(C_w\) is reducible.  The morphism
\begin{equation}\label{eq:normalized-real-projection}
 \pi:\widetilde C_w\longrightarrow\mathbf P^1_{\R},
 \qquad \pi=([x:y:t]\mapsto[y:t])\circ\nu,
\end{equation}
is finite of degree \(n\), real-fibered, and unramified at its real
points.  The monodromy of
\(\widetilde C_w(\R)\to\mathbf P^1(\R)\) is conjugate to \(\sigma_{\mathrm{rev}}\sigma_{\mathrm{blk}}\).
Consequently, its connected components correspond to the cycles of
\(\sigma_{\mathrm{rev}}\sigma_{\mathrm{blk}}\), and their covering degrees are the cycle lengths.
If \(\sigma_{\mathrm{rev}}\sigma_{\mathrm{blk}}\) is a single cycle, then \(C_w\) is absolutely
irreducible.
\end{theorem}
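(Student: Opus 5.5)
I would first establish the algebraic properties, then compute the monodromy by following the ordered output roots once around the real projective parameter line.

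\emph{Geometric reducedness.} The fiber over \([1:0]\) consists of the \(n\) distinct roots \(\gamma_i\), so \(\operatorname{Disc}_x h_w\) is not identically zero and \(h_w\) has no repeated factor over \(\C\). The curve \(C_w\) does not contain the point \([1:0:0]\), because \(h_w(1,0,0)=1\). Hence the projection \([x:y:t]\mapsto[y:t]\) is a finite morphism of degree \(n\) on \(C_w\), and therefore on \(\widetilde C_w\).

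\emph{Real-fiberedness.} By hyperbolicity in the \(x\)-direction, every real parameter has \(n\) real roots counted with multiplicity. A non-real point of \(\widetilde C_w\) over a real parameter would therefore exceed the degree count, once we check that the points of \(\widetilde C_w\) over a real parameter are exactly the branches through the real points. At a simple real root the curve is smooth and \(\pi\) is étale. At a singular real fiber, \Cref{thm:ordinary-real-singularities} gives \(r\) real analytic branches \(X_\ell(Y)=Y\zeta_\ell+O(Y^2)\). Each branch is a smooth point of the normalization on which \(Y\) is a local coordinate. Counting these branches with the simple roots gives \(n\) real preimages. So the whole fiber is real and reduced, and \(\pi\) is unramified at every real point.

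\emph{Monodromy.} Since \(\widetilde C_w(\R)\to\mathbf P^1(\R)\) is an unramified \(n\)-sheeted covering of a circle, it is determined by the monodromy of one loop. I would parametrize the loop in the chart \(y=1\) by \(t\in\R\), closed up through \([0:1]\). Label the sheets near \(t=0\) by increasing order. On each interval of real parameters with simple fibers, analytic continuation keeps the ordered labels, since branches cannot cross.

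At the collision parameter, if it lies in this chart, \Cref{thm:tangent-convolution} (in the form of \eqref{eq:ordinary-collision-branches}) shows that the branches in each cluster reverse their relative order when the local parameter changes sign. Clusters keep their relative positions because \(c_1<\cdots<c_\kappa\). Hence crossing the collision induces \(\sigma_{\mathrm{blk}}\) on the ordered labels.

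At \(t=\pm\infty\), I would pass through \([0:1]\) in the chart \(t=1\) with coordinate \(y\). There the points are \([x:y:1]\) with \(x=\gamma/t\)-type coordinates. Returning to the chart \(y=1\) means dividing by \(y\), whose sign flips, so the increasing order of \(x/y\) reverses relative to the order of \(x\). This gives \(\sigma_{\mathrm{rev}}\). Careful bookkeeping is needed when the collision fiber is itself at \([0:1]\). In that case both effects occur at the same point, and their composition is still \(\sigma_{\mathrm{rev}}\sigma_{\mathrm{blk}}\) up to conjugacy. The block order must be taken in the real representative of the parameter used, so the orientation convention has to be fixed.

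The composite monodromy is therefore conjugate to \(\sigma_{\mathrm{rev}}\sigma_{\mathrm{blk}}\). Its cycles correspond to the connected components of \(\widetilde C_w(\R)\), and the cycle lengths are their covering degrees.

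\emph{Irreducibility.} Each irreducible component of \(C_w\) over \(\C\) is defined over \(\R\) or comes in a conjugate pair. Its normalization maps with some degree \(d\geq1\) and, being real-fibered, has \(d\) real points over each real parameter. So every component owns a nonempty union of real cycles. A single cycle therefore forces a single component, and hence absolute irreducibility.

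The step I expect to be the main obstacle is the passage through infinity together with the sign and orientation conventions. It has to be checked uniformly when the collision fiber is at \([0:1]\), and the block list has to be read with the correct representative.
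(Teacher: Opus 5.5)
Your proposal is correct and follows the paper's proof essentially step for step: geometric reducedness from the simple base fiber, real-fiberedness and absence of real ramification from hyperbolicity together with the ordinary branches of \Cref{thm:ordinary-real-singularities}, block reversal at the collision, a full reversal when the loop closes, and irreducibility because every complex component carries real points of the normalization. The only difference is cosmetic: the paper traverses a half-circle \((y,t)=(\cos\theta,\sin\theta)\) whose endpoints lie over a nonsingular fiber and obtains \(\sigma_{\mathrm{rev}}\) from the antipodal identification \([x:-y:-t]=[-x:y:t]\), which avoids the case of a collision at the closing point that you treat separately (and correctly).
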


\begin{proof}
The polynomial \(h_w\) is homogeneous of degree \(n\) and monic of
degree \(n\) in \(x\).  Every irreducible factor over \(\C\) can
therefore be normalized to be homogeneous and monic in \(x\), with
positive \(x\)-degree.  A repeated factor would give a repeated factor
of \(h_w(x,1,0)\), contrary to the simplicity of that fiber.  This
proves geometric reducedness.

The point \([1:0:0]\) does not lie on \(C_w\), so projection defines a projective morphism to \(\mathbf P^1\).  Each fiber is finite, because its equation is monic in \(x\) and the morphism is therefore finite. Normalization is finite as well, and the simple base fiber
shows that the degree of their composition is \(n\).

Hyperbolicity makes every point of \(C_w\) over a real parameter
real.  At a smooth point of such a fiber, \(\partial_xh_w\ne0\) by
\Cref{thm:ordinary-real-singularities},
so projection is a local analytic isomorphism.  At a singular real
point, \eqref{eq:ordinary-collision-branches} gives all its normalized
branches.  They are real, and \(Y\) is a local coordinate on each of
them.  Hence every normalized point above a real parameter is real,
and projection is unramified there.  Since the real normalization is
a compact smooth one-dimensional manifold, it is a disjoint union of
circles, each covering \(\mathbf P^1(\R)\).  This also gives directly
the conclusion supplied in general by \cite[Theorem~2.19]{KummerShamovich20}.

A circuit of the real projective line can be represented by \((y,t)=(\cos\theta,\sin\theta)\), with \(\theta\) varying over an interval of length \(\pi\) whose endpoints lie above a nonsingular fiber. On each nonsingular interval, the roots retain their order. Near a multiple root, the expansion
\(X_\ell(Y)=Y\zeta_\ell+O(Y^2)\) shows that their order for \(Y<0\)
is the reverse of their order for \(Y>0\). Continuation through the singular fiber therefore induces the block reversal \(\sigma_{\mathrm{blk}}\). If there is no singular real fiber, this permutation is the identity.

At the end of the half-circle, homogeneity identifies the output
roots with the negatives of those at the beginning. The projective
identification \([x:-y:-t]=[-x:y:t]\) reverses the complete ordered
list and contributes \(\sigma_{\mathrm{rev}}\). The monodromy is
therefore \(\sigma_{\mathrm{rev}}\sigma_{\mathrm{blk}}\), up to
relabeling and choice of orientation. Its cycles correspond to the
connected components of the real normalization. A cycle of length
\(d\) means that a point returns to its starting position after
\(d\) circuits of the parameter line, so the corresponding component
has covering degree \(d\).

Finally, every complex irreducible factor of \(h_w\) is defined over
\(\R\).  For real parameters its roots are a subset of the real
roots of \(h_w\) and the coefficients of its monic normalization are
therefore real-valued on \(\R^2\), and hence are real polynomials.
Moreover, each factor contributes at least one point of the simple base
fiber, so its normalization has a real connected component.  A single
cycle in the full covering is thus possible only if there is a single
complex irreducible factor.
\end{proof}

In degree four, the full reversal is \(\sigma_{\mathrm{rev}}=(14)(23)\). For the composition \((2,1,1)\), the block reversal is \((12)\), so their product is the four-cycle \((1324)\). For \((1,2,1)\), the block reversal is \((23)\), and the product is \((14)\), with \(2\) and \(3\) fixed. Moving the double block from the edge of the ordered fiber to its middle therefore changes one real component of degree four into three components of degrees \(2,1,1\).

These are statements about the covering of the real normalization.
They leave open the complex genus and, when there is more than one real
component, the irreducible decomposition.  In particular, more than one
cycle does not imply reducibility.

\begin{corollary}[Realization of the covering types]
\label{cor:collision-composition-realization}
For every fixed simple pair \(\alpha,\beta\), every ordered
composition \((r_1,\ldots,r_{\kappa})\) of \(n\) occurs as the ordered
multiplicities of a real fiber of a finite free curve through that pair.
If some \(r_j\geq2\), it is the unique singular real fiber. For the
all-singleton composition the curve can be chosen with no singular real
fiber.  Thus the possible covering types in
\Cref{thm:real-monodromy} are independent of the fixed base pair and
are exactly the cycle types obtained from these block reversals.
\end{corollary}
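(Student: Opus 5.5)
The plan is to realize each composition by making the first input collapse completely at one real parameter. Then convolution reduces to a translation of the second input, whose multiplicity pattern can be prescribed freely. This also explains how compositions with several blocks of size at least two, such as \((2,2)\), can occur even though Fujie's theorem seems to limit the number of atom pairs. All the repeated output roots share the same first-input atom, namely the whole collapsed first input, with \(m=n\). For each second-input block of size \(k\), we get \(m+k-n=k\), so every block of the second configuration passes unchanged into the output.

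\emph{Construction.} Fix an ordered composition \((r_1,\ldots,r_\kappa)\) of \(n\) and choose real numbers \(g_1<\cdots<g_\kappa\). Let \(\gamma\in\R^n\) be the vector that takes the value \(g_j\) exactly \(r_j\) times. Set
\[
 u=\alpha,\qquad v=\beta+\gamma,
\]
and work at the parameter \([y:t]=[-1:1]\), in the chart \(t=1\). There the inputs are \(y\alpha+tu=0\) and \(y\beta+tv=\gamma\). The first input is therefore the point mass \(x^n\), and
\[
 h_w(x,-1,1)=x^n\boxplus_nP_\gamma=P_\gamma .
\]
So this real fiber has the ordered multiplicities \((r_1,\ldots,r_\kappa)\).

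\emph{Choice of sign representative.} The paper lists the roots in a chosen real representative of the parameter, and passing to the other representative \((1,-1)\) replaces the roots by their negatives. That reverses the composition. If needed, I will replace \(\gamma\) by \(-\gamma\) (equivalently, use the reversed composition) so that the prescribed order is read off correctly. The statement is symmetric under this reversal, so nothing is lost.

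\emph{Uniqueness and the all-singleton case.} If some \(r_j\geq2\), this fiber is singular. \Cref{lem:unique-real-collision-fiber} applies because \(\alpha,\beta\) are simple, so it is the unique singular real fiber. For the all-singleton composition I will instead take any \(w\notin\Sigma_2\). By \Cref{thm:collision-arrangement}, \(\Sigma_2\) is a finite union of subspaces of dimension \(n+1<2n\), so such \(w\) exist. The resulting curve has no singular real fiber, and its base fiber \([1:0]\) has composition \((1,\ldots,1)\) by Fujie's regularity.

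\emph{Covering types.} With the compositions realized, \Cref{thm:real-monodromy} computes the monodromy as \(\sigma_{\mathrm{rev}}\sigma_{\mathrm{blk}}\) from the composition alone. Every composition occurs through every base pair, so the set of covering types is the set of cycle types of these products, independent of \(\alpha,\beta\).

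\emph{Main obstacle.} The construction itself is short. The step needing most care is the bookkeeping of the sign representative, together with confirming that a complete collapse, at a fiber other than \([1:0]\), is compatible with the hypotheses used in \Cref{thm:real-monodromy}. Those hypotheses require only that \(\alpha,\beta\) be simple, which holds here.
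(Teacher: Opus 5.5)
Your proposal is correct and uses the same construction as the paper: \(u=\alpha\) and \(v=\beta+\gamma\), with \(\gamma\) constant on blocks of the prescribed sizes. At \([-1:1]\) the first input then collapses to \(x^n\), so the fiber there is \(P_\gamma\) itself.

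The one difference is in how uniqueness and the all-singleton case are obtained. The paper gets both from the direct observation that at every other parameter the first input has the distinct roots \((y+t)\alpha_i\), so the output is simple by Fujie's regularity; the same construction with all \(r_j=1\) therefore already has no singular real fiber. You instead invoke \Cref{lem:unique-real-collision-fiber} and a generic \(w\notin\Sigma_2\) from \Cref{thm:collision-arrangement}, which is also valid but relies on heavier results.
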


\begin{proof}
Partition the second-input labels into sets \(J_1,\ldots,J_{\kappa}\) of
the prescribed sizes, and choose real numbers \(b_1<\cdots<b_{\kappa}\).
Set \(u=\alpha\) and \(v_i=\beta_i+b_j\) for \(i\in J_j\).
At \([y:t]=[-1:1]\), the first input is \(x^n\), and the second
is \(\prod_j(x-b_j)^{r_j}\).  Their convolution is the second
polynomial itself.  At every other parameter, the first input has the
distinct roots \((y+t)\alpha_i\), so the output is simple.  The
fiber at \([1:0]\) is the prescribed base pair throughout.
If centered directions are desired, subtract the two velocity means and this only translates each output fiber and preserves the covering type.
\end{proof}

\begin{corollary}[Two collision blocks]
\label{cor:two-block-collision-irreducibility}
Let \(n\geq3\).  If a real fiber has exactly two distinct output roots
of multiplicities \(r,s\), where \(r+s=n\), then the real
normalization has \(\gcd(r,s)\) connected components.  Each covers
\(\mathbf P^1(\R)\) with degree \(n/\gcd(r,s)\).  In particular,
if \(r,s\) are coprime, then \(C_w\) is absolutely irreducible.
\end{corollary}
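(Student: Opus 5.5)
The plan is to apply \Cref{thm:real-monodromy} directly and compute the cycle type of \(\sigma_{\mathrm{rev}}\sigma_{\mathrm{blk}}\) for the composition \((r,s)\). Since the real fiber has a repeated root (one of \(r,s\) is at least \(2\) because \(n\geq3\)), \Cref{lem:unique-real-collision-fiber} makes it the unique singular real fiber. So the block reversal \(\sigma_{\mathrm{blk}}\) is the one attached to this composition, and no other collision contributes.

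The key step is to identify the product permutation explicitly. With \(N_1=r\) and \(N_2=n\), the block reversal is \(i\mapsto r+1-i\) for \(i\leq r\), and \(i\mapsto r+n+1-i\) for \(i>r\). Composing with \(\sigma_{\mathrm{rev}}(j)=n+1-j\) gives the following.
\begin{itemize}
\item For \(i\leq r\): \(\sigma_{\mathrm{rev}}\sigma_{\mathrm{blk}}(i)=n+1-(r+1-i)=i+s\).
\item For \(i>r\): \(\sigma_{\mathrm{rev}}\sigma_{\mathrm{blk}}(i)=n+1-(r+n+1-i)=i-r\), which equals \(i+s\) modulo \(n\).
\end{itemize}
Hence, on residues modulo \(n\) with representatives \(1,\ldots,n\), the monodromy is the translation \(i\mapsto i+s\). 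Its orbits are the cosets of the subgroup generated by \(s\) in \(\mathbb Z/n\mathbb Z\). There are \(\gcd(s,n)=\gcd(r,s)\) of them, each of length \(n/\gcd(r,s)\). I would double-check the composition order. The other order, \(\sigma_{\mathrm{blk}}\sigma_{\mathrm{rev}}\), is conjugate to this one (conjugate by \(\sigma_{\mathrm{rev}}\)), so the cycle type is the same either way. Also, \Cref{thm:real-monodromy} only fixes the monodromy up to conjugacy and orientation, and inversion preserves cycle type.

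By \Cref{thm:real-monodromy}, components of the real normalization correspond to cycles, and covering degrees equal cycle lengths. This gives \(\gcd(r,s)\) components of degree \(n/\gcd(r,s)\). When \(\gcd(r,s)=1\) the permutation is a single \(n\)-cycle, and the last assertion of \Cref{thm:real-monodromy} yields absolute irreducibility.

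The only real obstacle is bookkeeping, namely getting the formula for \(\sigma_{\mathrm{blk}}\) and the composition right. I would sanity-check it against the degree-four example in the text: \((2,1,1)\) is not two blocks, but \((3,1)\) should give translation by \(1\), a single \(4\)-cycle, and \((2,2)\) should give translation by \(2\), two \(2\)-cycles.
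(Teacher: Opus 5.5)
Your proposal is correct and follows the paper's own proof: you identify the fiber as the unique singular real fiber, compute \(\sigma_{\mathrm{rev}}\sigma_{\mathrm{blk}}\) as rotation by \(s\) modulo \(n\), and read off \(\gcd(n,s)=\gcd(r,s)\) cycles of length \(n/\gcd(r,s)\) before applying \Cref{thm:real-monodromy}. Your explicit case computation, together with the remarks on composition order and block order, fills in the ``direct substitution'' step that the paper leaves to the reader.
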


\begin{proof}
Since \(n\geq3\), this fiber is singular and hence is the unique
singular real fiber.  For the composition \((r,s)\), direct
substitution into \eqref{eq:collision-reversal-permutations} shows
that \(\sigma_{\mathrm{rev}}\sigma_{\mathrm{blk}}\) is rotation by \(s\) modulo \(n\).  It has
\(\gcd(n,s)=\gcd(r,s)\) cycles of the stated length.  Apply
\Cref{thm:real-monodromy}.
\end{proof}

For example, a degree-five finite free curve with one triple and one
double output root in a real fiber is absolutely irreducible.  Its real
normalization is a single circle covering the parameter circle five
times.  \Cref{cor:collision-composition-realization} constructs such a
curve through any prescribed pair of simple degree-five inputs.

\subsection{The non-real part of the discriminant}

The restrictions just proved on real collisions have a consequence that is
not about collisions at all. Because so little of the discriminant is able to vanish over the real parameter line, a definite quantity of it is forced off that line, and the quantity can be computed exactly. Define the binary form \(D_w(y,t)=\operatorname{Disc}_x h_w(x,y,t)\). The Homogeneity gives it degree \(n(n-1)\), and the simple base fiber makes it nonzero, so its projective zeros can be counted with their algebraic multiplicities.

\begin{corollary}[A sharp non-real discriminant gap]
\label{cor:non-real-discriminant-gap}
Let \(n\geq3\).  If \(w\notin L_{\alpha,\beta}\), then \(D_w\)
has at least \(2n-2\) non-real projective zeros, counted with
multiplicity.  Equality holds if and only if
\(w\in\Sigma_{n-1}\setminus L_{\alpha,\beta}\).
In this case \(C_w\) is absolutely irreducible, its normalization is
\(\mathbf P^1_{\R}\), and its only complex singular point is the
ordinary \((n-1)\)-fold point in its real collision fiber.
\end{corollary}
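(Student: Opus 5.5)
The plan is to count the real zeros of \(D_w\) and subtract them from the total degree \(n(n-1)\). By \Cref{lem:unique-real-collision-fiber}, all real zeros of \(D_w\) lie over a single parameter \([-c:1]\), since the base fiber \([1:0]\) is simple. Localize there in the affine parameter \(Y=y+c\). This is exactly the affine setting of \Cref{sec:collision-local}, with the original roots \(\alpha,\beta\) now serving as velocities. Because the discriminant is not identically zero, \Cref{prop:collision-discriminant} gives vanishing order exactly \(M=\sum_\ell r_\ell(r_\ell-1)\) at that parameter, where \((r_1,\ldots,r_\kappa)\) are the fiber multiplicities. (Homogeneity lets me pass from the affine discriminant to the binary form.) The number of non-real zeros is therefore \(n(n-1)-M\), and the task reduces to maximizing \(M\) over the compositions that can occur when \(w\notin L_{\alpha,\beta}\).

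Next I determine which compositions are allowed. By Fujie's theorem every repeated output root is an atom sum. The uniqueness argument in the paragraph before \Cref{prop:collision-discriminant} shows that distinct clusters of size at least two come from distinct atom pairs. I want to show that at most one cluster is repeated. If two clusters had sizes \(r,r'\geq2\), their atom pairs would have multiplicity sums \(m+k=n+r\) and \(m'+k'=n+r'\). The two pairs are distinct and give distinct output roots, so they differ in at least one coordinate. I expect this to make the corresponding input blocks disjoint in at least one input, which bounds the combined multiplicities. If both coordinates differ, both pairs of blocks are disjoint, so \(m+m'\leq n\) and \(k+k'\leq n\), giving \(2n+r+r'\leq 2n\), a contradiction. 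The subtle case is when the pairs share a coordinate, say \(a=a'\) with \(b\ne b'\). Then \(k+k'\leq n\) and \(m=m'\leq n\), so \(2m+k+k'=2n+r+r'\). This forces \(m=n\), meaning the first input collapses at that parameter. But then the output equals the second input translated, and Fujie gives \(r=k\) and \(r'=k'\). This case does not yield a contradiction, so I will instead bound \(M\) for an arbitrary composition.

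So I aim to maximize \(\sum r_\ell(r_\ell-1)\) subject to \(\sum r_\ell=n\) with not all of \(n\) collapsing into a single block. The case \(r=n\) would put \(w\) in \(\Sigma_n=L_{\alpha,\beta}\) by \Cref{thm:collision-arrangement}, which is excluded. By convexity, the maximum of \(\sum r_\ell(r_\ell-1)\) among compositions with every part at most \(n-1\) is attained uniquely, up to ordering, at \(\{n-1,1\}\), with value \((n-1)(n-2)\). Uniqueness needs \(n\geq3\). Any other composition, including \(\{n-2,2\}\) for \(n\geq4\), gives a strictly smaller value. This yields \(n(n-1)-(n-1)(n-2)=2n-2\) non-real zeros at least. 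Equality holds iff the fiber contains a root of multiplicity \(n-1\), which is exactly \(w\in\Sigma_{n-1}\setminus L_{\alpha,\beta}\), again by \Cref{thm:collision-arrangement}.

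In the extremal case, the ordinary \((n-1)\)-fold point has \(\delta\)-invariant \(\binom{n-1}{2}\), which equals the arithmetic genus \(\binom{n-1}2\) of a degree-\(n\) plane curve. Irreducibility comes from \Cref{cor:two-block-collision-irreducibility}, since \(\gcd(n-1,1)=1\). An irreducible curve has geometric genus at least \(0\), and the \(\delta\)-invariant of this point already uses up the whole arithmetic genus. So there can be no further singularities, real or complex, and the geometric genus is zero. The normalization is then a genus-zero curve with real points, because the base fiber is real, so it is \(\mathbf P^1_{\R}\). As a consistency check, Riemann--Hurwitz for the degree-\(n\) map from \(\mathbf P^1\) to \(\mathbf P^1\) gives ramification \(2n-2\), matching the count. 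I expect the main obstacle to be the justification that the discriminant order is exactly \(M\) in the projective chart; this should follow from \Cref{prop:collision-discriminant} together with \Cref{lem:affine-tangent-simplicity}.
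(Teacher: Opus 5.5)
Your proposal is correct and follows essentially the same route as the paper: all real zeros of $D_w$ sit at the unique real collision fiber with total order $\sum_\ell r_\ell(r_\ell-1)$, this is maximized over compositions other than $(n)$ only at $(n-1,1)$ and its reversal, and the extremal case follows from the $\delta$-invariant/genus count together with irreducibility from the two-block corollary. One caveat about the digression you abandoned: the case $a=a'$ does not force $m=n$ (for $n=6$, blocks with $m=5$ and $k=k'=3$ give two double output clusters), although your conclusion that no contradiction arises there is right and nothing in the final argument depends on it.
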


\begin{proof}
If there is a singular real fiber, let \((r_1,\ldots,r_{\kappa})\) be its
ordered composition, including singletons.  By
\eqref{eq:ordinary-collision-branches}, the gap between two branches
in a block of size \(r_j\) vanishes to first order in a local
parameter.  Gaps between different blocks remain nonzero.  The
discriminant therefore vanishes to order
\begin{equation}\label{eq:real-discriminant-zero-order}
 \operatorname{ord}D_w=\sum_{j=1}^{\kappa} r_j(r_j-1).
\end{equation}
This is also the local discriminant factorization in
\Cref{prop:collision-discriminant}.  There are no other real zeros by
\Cref{lem:unique-real-collision-fiber}.  If no singular real fiber
exists, the total real zero order is zero.

Since \(w\notin L_{\alpha,\beta}=\Sigma_n\), the composition
cannot be \((n)\).  Among compositions of \(n\) into at least two
parts, the largest value of \(\sum_jr_j(r_j-1)\) is
\((n-1)(n-2)\), attained only at \((n-1,1)\) and its reversal.
Indeed, merging two parts increases the sum by twice their product and after reducing to two parts, moving a unit from the smaller to the
larger increases the sum until the smaller is one.  Subtracting from
the total degree gives
\[
 n(n-1)-(n-1)(n-2)=2n-2.
\]
For \(n\geq3\), equality requires a singular real fiber of type
\((n-1,1)\), which is precisely
\(\Sigma_{n-1}\setminus L_{\alpha,\beta}\).

In the equality case, \Cref{cor:two-block-collision-irreducibility}
makes the curve absolutely irreducible.  An ordinary \((n-1)\)-fold
point has \(\delta\)-invariant \(\binom{n-1}{2}\): its smooth
branches intersect pairwise transversely, each contributing one.  This is
the arithmetic genus of a plane curve of degree \(n\).  The
normalization genus formula
\(g(\widetilde C_w)=\binom{n-1}{2}-\sum_p\delta_p\), with the sum
over complex singular points, therefore forces genus zero and excludes
any further complex singularity (see~\cite[Chapter~IV]{Hartshorne77}).
The normalization has real points above the simple base fiber. A smooth
projective genus-zero real curve with a real point is isomorphic to
\(\mathbf P^1_{\R}\), completing the proof.
\end{proof}

The bound is attained through every fixed base pair.  For example, take
a direction with exactly one nonzero coordinate, in the second input.  At \([y:t]=[0:1]\), the first input collapses completely
and the second has one exceptional root, so the output has
multiplicities \((n-1,1)\).  Such a direction is outside
\(L_{\alpha,\beta}\), by the support bound in the proof of
\Cref{thm:collision-arrangement}.

For \(n=2\), a non-split curve has no singular real fiber, since
\(\Sigma_2=L_{\alpha,\beta}\).  Its real monodromy is the
transposition \(\sigma_{\mathrm{rev}}\), so it is absolutely irreducible and therefore
a smooth conic.  It has real points and is rational, and both zeros
of its degree-two discriminant are non-real.  Thus the numerical bound
\(2n-2\) remains sharp, but its equality case has no singular point.

The cubic in \Cref{ex:cubic-collision-geometry} illustrates the equality
case in degree three.  Its homogeneous equation and discriminant are
\begin{align*}
 H(x,y,t)&=x^3-2x^2y-5t^2x+\frac{14}{3}t^2y,\\
 \operatorname{Disc}_xH&=\frac43t^2
       \bigl(375t^4+264t^2y^2+112y^4\bigr).
\end{align*}
Both input configurations are simple at \([y:t]=[1:1]\) and choosing
that point as the base puts the curve in the setting of this section.
The quartic factor is positive on the real projective parameter line,
so the node at \([0:1:0]\) is its only real collision and there are
four non-real discriminant zeros.  Its composition is \((2,1)\), for
which \(\sigma_{\mathrm{rev}}\sigma_{\mathrm{blk}}=(13)(12)\) is a three-cycle.  Thus the real normalization
is connected, and the cubic is irreducible and rational.  The two
branches whose slopes computed the Fisher pole in that example also
determine this global covering.

The count in \Cref{cor:non-real-discriminant-gap} is a count of discriminant zeros, and a zero of \(D_w\) is a parameter value that can arise in either of two ways. Either from a singular point of the curve, an intersection of distinct components included, or from ramification of the normalized projection. In the rational equality case the two sources separate cleanly, because the ordinary real \((n-1)\)-fold point is then the only singularity. Every remaining zero must therefore come from non-real ramification, and their total multiplicity is exactly \(2n-2\).

\subsection{Entropy detects the equality direction}
\label{sec:entropy-detects-equality}

We can now return to the equality question with which the paper began.
The discriminant counts colliding pairs of roots, while entropy takes
its logarithm. The local formula makes this relation precise enough
to distinguish complete splitting from every partial collision.

At the unique real singular fiber, each pair of roots that meets contributes two to the discriminant’s vanishing order. Dividing by \(n(n-1)\), twice the total number of root pairs, therefore gives the fraction of pairs that collide. The homogeneous discriminant makes this definition independent of the affine chart and includes intersections at infinity. Recall that \(D_w(y,t)=\operatorname{Disc}_x h_w(x,y,t)\) is a
nonzero binary form of degree \(n(n-1)\).  Define its normalized
total real zero order by
\begin{equation}\label{eq:real-entropy-order}
 \vartheta(w)=\frac{1}{n(n-1)}
    \sum_{\xi\in\mathbf P^1(\R)}\operatorname{ord}_{\xi}D_w.
\end{equation}
The sum is zero when there is no real collision.  Otherwise there is
only one real collision parameter, by
\Cref{lem:unique-real-collision-fiber}.  Choose a local affine chart
there, with coordinate \(\eps\) vanishing at that parameter, and
let \(h_\eps\) be the corresponding monic polynomial in \(x\).
\Cref{cor:collision-information} gives
\begin{equation}\label{eq:entropy-order-local-meaning}
 \chi_n[h_\eps]=\vartheta(w)\log|\eps|+O(1),
 \qquad
 \cN_n(h_\eps)=|\eps|^{2\vartheta(w)}
                    \bigl(c_\xi+O(|\eps|)\bigr),
 \quad c_\xi>0.
\end{equation}
Changing the local parameter or the nonvanishing projective lift
changes the constant term, but not \(\vartheta(w)\). The fact that the definition is projective is actually important here. For a pure translation \(w=a\mathbf1\oplus b\mathbf1\), the output roots in the chart \(y=1\) are \(\gamma_i+t(a+b)\), where \(\gamma=\Omega(\alpha,\beta)\). Their gaps remain constant at every finite time. In the chart \(t=1\), however, the same roots are \(a+b+y\gamma_i\), and they all meet at \(y=0\). An affine definition would not merely overlook that intersection, it would record the translation direction as having no collision at all. The projective
definition sees it.

Suppose \(n\geq3\).  The split classification and the non-real
discriminant gap give
\begin{align}
 \vartheta(w)=1
 &\quad\Longleftrightarrow\quad w\in L_{\alpha,\beta},
       \label{eq:entropy-order-split}\\
 w\notin L_{\alpha,\beta}
 &\quad\Longrightarrow\quad
       0\leq\vartheta(w)\leq\frac{n-2}{n},
       \label{eq:entropy-order-gap}\\
 \vartheta(w)=\frac{n-2}{n}
 &\quad\Longleftrightarrow\quad
       w\in\Sigma_{n-1}\setminus L_{\alpha,\beta}.
       \label{eq:entropy-order-rational-stratum}
\end{align}
Indeed, if the collision multiplicities are
\((r_1,\ldots,r_{\kappa})\), then
\(n(n-1)\vartheta(w)=\sum_jr_j(r_j-1)\), including singleton
blocks.  This sum reaches \(n(n-1)\) only for the composition
\((n)\), whose locus is
\(\Sigma_n=L_{\alpha,\beta}\).  The remaining assertions are
\Cref{cor:non-real-discriminant-gap} written in terms of real zero
order.  In particular, the largest value below one identifies curves
whose normalization is \(\mathbf P^1_{\R}\) and whose only
complex singularity is an ordinary \((n-1)\)-fold point.

Every input pair admits a common dilation, so the value one for an
arbitrary direction does not distinguish Hermite inputs.  The direction
must be chosen by the information problem itself.

\begin{samepage}
\begin{corollary}[An entropy criterion for Stam equality]
\label{cor:score-entropy-equality}
Let \(n\geq3\), and let \(\alpha,\beta\in\R^n\) have simple
real coordinates.  Put
\begin{equation}\label{eq:optimal-score-direction}
 I_\alpha=\|s(\alpha)\|_2^2,\qquad
 I_\beta=\|s(\beta)\|_2^2,\qquad
 w_* = \frac{s(\alpha)}{I_\alpha}
          \oplus\frac{s(\beta)}{I_\beta}.
\end{equation}
Then \(\vartheta(w_*)=1\) if and only if equality holds in the
finite free Stam inequality for \(P_\alpha,P_\beta\), if and
only if both inputs are independently translated and positively scaled
Hermite polynomials.  For every other simple input pair,
\(\vartheta(w_*)\leq(n-2)/n\).
\end{corollary}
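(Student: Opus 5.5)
The plan is to chain three results already in hand: the dichotomy \eqref{eq:entropy-order-split}--\eqref{eq:entropy-order-gap} for $\vartheta$, the split classification \Cref{thm:intro-split-locus}, and the Stam equality analysis in \Cref{prop:weighted-score-equality} together with \Cref{thm:information-equality}. By \eqref{eq:entropy-order-split}, and since $n\geq3$, we have $\vartheta(w_*)=1$ exactly when $w_*\in L_{\alpha,\beta}$. For every $w_*\notin L_{\alpha,\beta}$, \eqref{eq:entropy-order-gap} gives $\vartheta(w_*)\leq(n-2)/n$. Both the equivalence and the final bound therefore reduce to one claim: $w_*\in L_{\alpha,\beta}$ if and only if Stam equality holds.

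First I would show that $w_*\in L_{\alpha,\beta}$ forces Stam equality. Scores have coordinate sum zero, so $w_*\in\mathcal V$. Write $w_*=(a\one+c\alpha^\circ)\oplus(b\one+c\beta^\circ)$. Taking means gives $a=b=0$, so
\[
I_\alpha^{-1}s(\alpha)=c\,\alpha^\circ,\qquad I_\beta^{-1}s(\beta)=c\,\beta^\circ .
\]
From this point I would use either of two routes.

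\emph{Route via Jacobian rigidity.} Relations \eqref{eq:radial-jacobian-vector} and \eqref{eq:radial-norm-additivity} give $\|Jw_*\|=\|w_*\|$. Score transport \eqref{eq:score-transport} then turns this into equality in \eqref{eq:weighted-score-contraction} with $a=I_\alpha^{-1}$ and $b=I_\beta^{-1}$, which is Stam equality.

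\emph{Route via Hermite polynomials.} Pairing with $\alpha^\circ$ through \eqref{eq:radial-pairing} gives $c>0$. The radial-score argument in the proof of \Cref{prop:weighted-score-equality} then identifies $P_\alpha$ and $P_\beta$ as translated heat Hermite polynomials.

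Next I would prove the converse. Suppose Stam equality holds. As in the proof of Stam equality in \Cref{thm:information-equality}, this is $\|Jw_*\|=\|w_*\|$ with $w_*\in\mathcal V$. \Cref{cor:intro-jacobian-rigidity} then places $w_*$ in $\operatorname{span}\{\alpha^\circ\oplus\beta^\circ\}\subset L_{\alpha,\beta}$, so $\vartheta(w_*)=1$. The equivalence with independently translated and scaled Hermite inputs is exactly \Cref{thm:information-equality}. Conversely, for Hermite inputs, \eqref{eq:hermite-score} shows that each weighted score is a positive multiple of the centered root vector. The multiples agree because $I_\alpha^{-1}s(\alpha)=\tau_\alpha^{-1}\cdot$ const $\cdot\,\alpha^\circ$ with the appropriate normalization. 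This agreement, however, is not needed, since the first route already gives the implication directly.

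The main obstacle I expect is a subtle point in the first route rather than a calculation: membership of $w_*$ in $L_{\alpha,\beta}$ requires a common dilation coefficient $c$ for both inputs. Independently radial scores would not suffice. That common coefficient is exactly what the Jacobian-norm argument uses, so I would rely on it rather than on the Hermite equation alone. The final sentence of the corollary then follows immediately: for non-equality pairs we have $w_*\notin L_{\alpha,\beta}$, and \eqref{eq:entropy-order-gap} applies because $n\geq3$.
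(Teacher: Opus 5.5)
Your proposal is correct and is essentially the paper's proof: Stam equality is norm preservation of $J$ on $w_*\in\mathcal V$, which the matrix defect and split classification (packaged as \Cref{cor:intro-jacobian-rigidity}) identify with $w_*\in L_{\alpha,\beta}$, hence with $\vartheta(w_*)=1$ by \eqref{eq:entropy-order-split}, while \Cref{thm:information-equality} and \eqref{eq:entropy-order-gap} supply the Hermite characterization and the bound. One correction to your side remarks: if $s(\alpha)$ is radial, then \eqref{eq:radial-pairing} forces $I_\alpha^{-1}s(\alpha)=\binom n2^{-1}\alpha^\circ$, independent of the Hermite scale, so your formula with $\tau_\alpha^{-1}$ is wrong and the common dilation coefficient you flag as the main obstacle is in fact automatic for this weighting.
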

\end{samepage}

\begin{proof}
Both scores are centered, so \(w_*\in\mathcal V\).  Their
squared norms are positive by \eqref{eq:radial-pairing}.
Writing \(\gamma=\Omega(\alpha,\beta)\) and
\(J=D\Omega(\alpha,\beta)\), score transport gives
\begin{equation}\label{eq:optimal-score-norms}
 \|w_*\|_2^2=I_\alpha^{-1}+I_\beta^{-1},\qquad
 Jw_*=(I_\alpha^{-1}+I_\beta^{-1})s(\gamma).
\end{equation}
Since \(\Phi_n\) is the same positive multiple of the squared
score norm for all three degree-\(n\) polynomials, Stam equality
is exactly \(\|Jw_*\|_2=\|w_*\|_2\).
The matrix defect and split classification identify this condition with
\(w_*\in L_{\alpha,\beta}\), by \eqref{eq:entropy-order-split}, it is also equivalent to
\(\vartheta(w_*)=1\).
\Cref{thm:information-equality} supplies the Hermite
characterization.  Finally, \eqref{eq:entropy-order-gap} gives
the bound for all other inputs.
\end{proof}

The coefficient \(\vartheta\) remembers the proportion of root
pairs that meet.  In the cubic of
\Cref{ex:cubic-collision-geometry}, the two branches through the
node form one colliding pair, while their gaps to the third root stay
nonzero.  Thus \(\vartheta=2/6=1/3\), and the entropy power
vanishes like \(|\eps|^{2/3}\), as the direct calculation showed.
If all three roots meet, all three pairs contribute and
\(\vartheta=6/6=1\).  The corollary says that the optimally
weighted score motion produces this complete collapse precisely for
the inputs that make Stam sharp.  An equality condition measured at
the simple base fiber is thereby detected by the entropy singularity
of the same curve at its collision fiber.

\section{Further questions}
\label{sec:further-questions}
The entropy criterion detects exact equality through a singular fiber of the curve. To formulate a quantitative version of split rigidity, we return to the simple base fiber, where the Jacobian defect measures the loss of squared Euclidean norm under convolution. We conjecture that this defect controls the squared distance of the input velocity from the split locus, with a lower bound depending only on the degree and the relative input variances, even as individual root gaps tend to zero.

\begin{conjecture}[Stability of finite free contraction]
\label{conj:quantitative-split-rigidity}
For every \(n\geq3\), there exists \(c_n>0\) with the following property.
Let \(\alpha,\beta\in\R^n\) have simple coordinates, put
\(p=P_\alpha\), \(q=P_\beta\), and let
\(J=D\Omega(\alpha,\beta)\).  Write
\(V_p=\Var(p)\) and \(V_q=\Var(q)\).  For every
\(w=u\oplus v\) with \(\sum_i u_i=\sum_jv_j=0\),
\begin{equation}\label{eq:quantitative-split-rigidity}
 \|w\|_2^2-\|Jw\|_2^2
 \geq c_n\,\frac{4V_pV_q}{(V_p+V_q)^2}
          \operatorname{dist}(w,L_{\alpha,\beta})^2.
\end{equation}
The distance is Euclidean, and \(c_n\) is independent of all input
root gaps.
\end{conjecture}

For separately centered motions, distance from \(L_{\alpha,\beta}\) is distance from the radial line \(\operatorname{span}\{\alpha^\circ\oplus\beta^\circ\}\). The kernel classification gives a positive bound on its orthogonal complement for each fixed simple pair, and uniformly on compact subsets of the simple-rooted locus. The conjecture is saying if the input variances stay positive and comparable, individual root gaps may tend to zero without weakening the estimate. The variance factor equals one when the variances agree and tends to zero when one input collapses relative to the other. This reflects the obstruction at a point-mass factor, where convolution merely translates the other configuration and every centered motion of that configuration preserves norm. The specific quadratic decay in the shrinking root scale proposed in \eqref{eq:quantitative-split-rigidity} is also part of the conjecture.

Applying the conjecture to the optimally weighted score direction \(w_*\) from \eqref{eq:optimal-score-direction} would give a quantitative form of Stam:
\begin{equation}\label{eq:conjectural-stam-stability}
\begin{split}
 1-\Phi_n(p\boxplus_nq)
       \left(\frac1{\Phi_n(p)}+\frac1{\Phi_n(q)}\right)
 \geq {}&c_n\,\frac{4V_pV_q}{(V_p+V_q)^2}\\
 &\mathrel{}\times
 \left(1-\frac{\Phi_n(p)^{-1}+\Phi_n(q)^{-1}}{V_p+V_q}\right).
\end{split}
\end{equation}
Indeed, score transport identifies the left-hand side with \((\|w_*\|_2^2-\|Jw_*\|_2^2)/\|w_*\|_2^2\). The radial pairing \eqref{eq:radial-pairing} and orthogonal projection
onto the radial line give
\[
 \frac{\operatorname{dist}(w_*,L_{\alpha,\beta})^2}{\|w_*\|_2^2}
 =1-\frac{\Phi_n(p)^{-1}+\Phi_n(q)^{-1}}{V_p+V_q}.
\]
The same pairing, by Cauchy--Schwarz, gives \(V_p\Phi_n(p)\geq1\), with equality precisely for Hermite inputs. Thus the last factor in \eqref{eq:conjectural-stam-stability} measures the combined departure of the inputs from equality in this variance--Fisher inequality. It vanishes exactly when both inputs are Hermite, with independent translations and scales. Classical information
inequalities have related quantitative stability results \cite{CourtadeFathiPananjady18}, while rigidity of Wasserstein contraction under ordinary convolution is studied in~\cite{FathiGoldmanTsodyks25}. The conjecture here concerns the finite free root Jacobian underlying Stam in~\cite{GVS26}.

In degree three, \Cref{conj:quantitative-split-rigidity} holds with the optimal constant \(c_3=1/2\). To see this, center the inputs and set \(A=\|\alpha\|_2^2\), \(B=\|\beta\|_2^2\), \(R=A+B\), and \(T=\sum_i\alpha_i^3+\sum_i\beta_i^3\).
The output \(\gamma=\Omega(\alpha,\beta)\) has squared norm \(R\) and sum of cubes \(T\).  A separately centered input direction orthogonal to \(\alpha\oplus\beta\) preserves the second moment to first order, so its image under \(J\) lies in the one-dimensional space \(\{\one,\gamma\}^{\perp}\).  Differentiating the sum of cubes equates the pairing of the input velocity with the coordinatewise square vector \(\alpha^2\oplus\beta^2\) to the pairing of its output velocity
with \(\gamma^2\). Both pairings are unchanged if we project these square vectors onto the corresponding complements. The squared norms of the projected vectors are \((A^2+B^2)/6-T^2/R\) on the input side and \(R^2/6-T^2/R\) on the output side. These follow from \(\sum_i\rho_i^4=\tfrac12(\sum_i\rho_i^2)^2\) for any centered cubic root vector \(\rho\).  The squared operator norm is the ratio of these two squared norms. Hence the smallest defect eigenvalue on the centered radial complement is
\begin{equation}\label{eq:cubic-transverse-defect}
 1-\frac{(A^2+B^2)/6-T^2/R}{R^2/6-T^2/R}
 =\frac{2AB}{R^2-6T^2/R}
 \geq\frac{2AB}{R^2}.
\end{equation}
The denominator is positive because the output is simple. Since \(4V_pV_q/(V_p+V_q)^2=4AB/R^2\), this proves the bound on that complement. The radial direction has zero defect, and its image is orthogonal to the images of the complementary directions, so the bound holds for every separately centered motion. It is attained when \(T=0\). For instance, take \(\alpha=\beta=(-1,0,1)\) and \(u=v=(1,-2,1)\).  Then \(J(u\oplus v)=(1,-2,1)\), so the squared input and output speeds are \(12\) and \(6\), respectively, while the direction is orthogonal to the split locus.

The tangent convolution suggests why partial collisions might preserve a positive bound in higher degree.  If neither limiting input is a point mass, let \(m\) and \(k\) be the multiplicities of the input atoms producing a repeated output root.  Its multiplicity \(r=m+k-n\) then satisfies \(r<m,k\).  Both local factors in \Cref{thm:tangent-convolution} therefore undergo genuine degree reduction. Normalized differentiation from degree \(m\) to degree \(r\) multiplies the centered sum of squared roots by \(r(r-1)/(m(m-1))<1\). This gives a possible source of strict contraction inside a cluster.  A proof must still control its interaction with motions of the cluster centers. Analytic degenerations introduce another difficulty, since roots may separate at several scales. The roots \(a\pm\eps^2\), for example, have the same first-order velocity and separate only on the scale \(\eps^2\).

\begin{question}[Successive collision scales]
\label{que:successive-collision-scales}
Let \(\alpha(\eps),\beta(\eps)\in\R^n\) be real analytic root
configurations that are simple for \(\eps\ne0\), and suppose neither
limiting configuration is a point mass.  Can \Cref{thm:tangent-convolution} be extended to successive rescalings of their colliding clusters so as to identify the leading convolution
at each separation scale? Can these descriptions establish \eqref{eq:quantitative-split-rigidity} along such degenerations, with a constant depending only on \(n\)?
\end{question}
The roots outside a cluster must remain in the calculation, since their gaps contribute even to the leading entropy constant in \Cref{cor:collision-information}. The complex geometry raises an independent question. A single cycle in \Cref{thm:real-monodromy} forces absolute irreducibility, but several real components may belong to one complex irreducible curve.
\begin{question}[Complex factors and singularities]
\label{que:complex-geometry}
Fix simple real root vectors \(\alpha,\beta\in\R^n\).  Can the
complex irreducible factors and non-real singularities of \(C_w\)
be characterized directly in terms of the input velocities
\(w=u\oplus v\)?
\end{question}

The rational stratum in \Cref{cor:non-real-discriminant-gap} gives one complete case and beyond it,
the ordered real collision multiplicities alone do not suffice. There is also more to determine on the real normalization.  On each irreducible normalized component, our distinguished projection supplies one degree vector in the separating semigroup of~\cite{KummerShaw20}.

\begin{question}[Other separating morphisms]
\label{que:separating-morphisms}
Which covering-degree vectors occur for separating morphisms from the normalization of an irreducible component of \(C_w\) to \(\mathbf P^1_\R\)?  Can the resulting separating semigroup be described in terms of the input velocities?
\end{question}

Finally, \Cref{sec:local-dynamics} describes repeated self-convolution near Hermite, with a dilation by \(1/\sqrt2\) after each step.  To apply these local estimates to a general normalized configuration, one needs to control the time required to reach that neighborhood.
\begin{question}[Hermite neighborhood]
\label{que:hermite-entry}
For \(n\geq3\), what are the sharp bounds, in terms of \(n\) and \(\delta>0\), for the number of self-convolutions, each followed by dilation by \(1/\sqrt2\), needed to bring every increasingly ordered simple root configuration of mean zero and variance \(n-1\) within Euclidean distance \(\delta\) of the increasing root vector of \(\He_n\)?  In particular, how do these bounds depend on \(n\) when the target neighborhood is chosen so that \Cref{thm:nonlinear-local-contraction} applies?
\end{question}

Understanding this dependence would connect the local root-distance
estimates with the global convergence bounds of~\cite{AP20BerryEsseen}. An estimate uniform in the degree would also need to specify how the target radius depends on \(n\) and whether the initial configurations require further restrictions. The constants \(c_n\) in \Cref{conj:quantitative-split-rigidity} may themselves depend on the degree, so the proposed rigidity estimate leaves this question open.

\appendix

\section{Hermite coupling and the classical root spectrum}
\label{sec:spectrum}

The equality theorem singles out the Hermite configuration.  We now examine
convolution near that configuration.  The tools for this part of the argument
are the Hermite heat equation and the classical spectral theory of its zeros.
They give an explicit description of the root Jacobian, which will then be
used to measure local convergence in Euclidean root distance.

For \(\tau>0\), write
\(H_{n,\tau}(x)=\exp(-\tau\partial_x^2/2)x^n
=\tau^{n/2}\He_n(x/\sqrt\tau)\), and let \(h^{(\tau)}\) be its
increasing root vector.  For \(\tau,\sigma>0\), define
\begin{equation}\label{eq:En-definition}
 E_{n;\tau,\sigma}=D_1\Omega(h^{(\tau)},h^{(\sigma)}),
 \qquad E_n=E_{n;\tau,\tau}.
\end{equation}
The input and output tangent vectors are both represented by their ordered
root coordinates in \(\R^n\).  Simultaneous scaling of the two input root
vectors leaves the Jacobian unchanged, so \(E_n\) is independent of \(\tau\).
Commutativity gives equality of the two partial Jacobians at the diagonal.
The symmetry of either partial Jacobian as a matrix will follow from its
orthogonal modes.

The modes below also occur in the classical inverse-square-gap matrix of
Hermite zeros (see \cite[\S4.1, equations (4.5)--(4.7)]{Sasaki15}) and the
earlier work cited there. We give a direct heat-operator proof and then
identify the precise relation between the two matrices. In particular, this proves the finite-free formulation conjectured in \cite[Conjecture~4.1]{Hashemi26PStam} using that classical structure.

\begin{theorem}[Unequal Hermite coupling spectrum]
\label{thm:exact-spectrum}
Let \(n\geq2\) and \(\tau,\sigma>0\).  For \(1\leq m\leq n\), define
\(v^{(m,\tau)}\in\R^n\) by
\begin{equation}\label{eq:tangent-modes}
 v_i^{(m,\tau)}
 =-\frac{H_{n,\tau}^{(m)}(h_i^{(\tau)})}
         {H_{n,\tau}'(h_i^{(\tau)})}.
\end{equation}
These vectors form an orthogonal basis, and
\begin{equation}\label{eq:En-eigenvalues}
 E_{n;\tau,\sigma}v^{(m,\tau)}
 =\left(\frac{\tau}{\tau+\sigma}\right)^{(m-1)/2}
  v^{(m,\tau)}.
\end{equation}
Consequently, \(E_{n;\tau,\sigma}\) is symmetric positive definite.  Its
operator norm on \(\one^\perp\) is \(\sqrt{\tau/(\tau+\sigma)}\), and
\begin{equation}\label{eq:unequal-En-spectrum}
 \spec(E_{n;\tau,\sigma})
 =\left\{\left(\frac{\tau}{\tau+\sigma}\right)^{(m-1)/2}:
 1\leq m\leq n\right\}.
\end{equation}
In particular,
\begin{equation}\label{eq:En-spectrum}
 \spec(E_n)=\{1,2^{-1/2},2^{-2/2},\ldots,2^{-(n-1)/2}\}.
\end{equation}
\end{theorem}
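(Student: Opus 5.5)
We move one root of the first input and read off the output velocities through the frozen operator, exactly as in the proof of \Cref{thm:positive-one-sided-jacobian}. Put \(p=H_{n,\tau}\), \(q=H_{n,\sigma}\) and \(h=h^{(\tau)}\). The output is \(T_qp=H_{n,\tau+\sigma}\), with roots \(g_i\). By \eqref{eq:frozen-convolution-operator}, \(T_q=\exp(-\tfrac{\sigma}{2}\partial_x^2)\) on all polynomials.

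A first-order motion \(\dot h\) of the input roots changes \(p\) by
\[
 \delta p=-\sum_j\dot h_j\,\frac{p(x)}{x-h_j}.
\]
Hence the output changes by \(T_q\delta p\), and the output root velocities are
\[
 \dot g_i=-\frac{(T_q\delta p)(g_i)}{H_{n,\tau+\sigma}'(g_i)}.
\]
The key step is to choose \(\dot h=v^{(m,\tau)}\) and identify \(\delta p\). Since \(\delta p\) has degree \(\le n-1\), Lagrange interpolation at the roots \(h_j\) shows that \(\delta p\) is the unique polynomial of degree \(\le n-1\) with \(\delta p(h_j)=-\dot h_j\,p'(h_j)\). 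For \(\dot h=v^{(m,\tau)}\) these prescribed values are \(p^{(m)}(h_j)\). Therefore \(\delta p=p^{(m)}\) for \(m\ge1\), since \(p^{(m)}\) has degree \(n-m\le n-1\).

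Now \(T_q\) commutes with \(\partial_x\). Together with the semigroup law this gives
\[
 T_q\delta p=H_{n,\tau+\sigma}^{(m)},
 \qquad
 \dot g_i=-\frac{H_{n,\tau+\sigma}^{(m)}(g_i)}{H_{n,\tau+\sigma}'(g_i)}=v^{(m,\tau+\sigma)}_i.
\]
Dilation covariance now finishes the eigenvalue computation. With \(\lambda=\sqrt{(\tau+\sigma)/\tau}\), the relation \(H_{n,\tau+\sigma}(x)=\lambda^nH_{n,\tau}(x/\lambda)\) gives \(g=\lambda h\) and
\[
 v^{(m,\tau+\sigma)}=\lambda^{-(m-1)}v^{(m,\tau)},
\]
because the ratio of an \(m\)-th derivative to a first derivative scales by \(\lambda^{-(m-1)}\). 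This is \eqref{eq:En-eigenvalues}.

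It remains to show that the \(v^{(m,\tau)}\) form an orthogonal basis.
\begin{itemize}
\item \emph{Linear independence.} The map \(f\mapsto\bigl(f(h_j)/p'(h_j)\bigr)_j\) is an isomorphism from polynomials of degree \(\le n-1\) onto \(\R^n\). The \(p^{(m)}\), \(1\le m\le n\), have distinct degrees \(n-m\), so they form a basis of that space. Hence the \(v^{(m,\tau)}\) span \(\R^n\).
\item \emph{Orthogonality.} This is the main obstacle. The Euclidean inner product becomes \(\sum_j f(h_j)g(h_j)/p'(h_j)^2\), which is a Gauss-type quadrature only after weighting. The first thing to try is to compute the inner product \(\langle v^{(m)},v^{(l)}\rangle\) through a residue sum of \(p^{(m)}p^{(l)}/(p'^2 p)\)-type expressions. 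A cleaner alternative is to cite the classical result: \cite[\S4.1]{Sasaki15} shows that the \(v^{(m)}\) are eigenvectors of the symmetric inverse-square-gap matrix of Hermite zeros, with distinct eigenvalues. Eigenvectors of a symmetric matrix with distinct eigenvalues are orthogonal, so this settles orthogonality once the two families are matched.
\end{itemize}

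Once orthogonality holds, \(E_{n;\tau,\sigma}\) is diagonal in an orthogonal basis with positive eigenvalues, hence symmetric positive definite. This gives the spectrum \eqref{eq:unequal-En-spectrum}. Since \(v^{(1)}=-\one\), the remaining modes span \(\one^\perp\), and the largest eigenvalue there is the \(m=2\) value \(\sqrt{\tau/(\tau+\sigma)}\), which is the operator norm on \(\one^\perp\). Setting \(\sigma=\tau\) gives \eqref{eq:En-spectrum}.
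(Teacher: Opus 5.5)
\textbf{Eigenvalue computation.} This part is correct and is essentially the paper's argument. The paper perturbs \(H_{n,\tau}\) by \(\epsilon H_{n,\tau}^{(m)}\) and reads off the input velocity \(v^{(m,\tau)}\). You start from that velocity and recover \(\delta p=p^{(m)}\) by interpolation. The commutation of the heat operator with \(\partial_x\), the dilation \(g=\lambda h\), and the linear-independence argument then coincide with the paper's. One small correction: \(T_q\) is the truncation of \(\exp(-\tfrac{\sigma}{2}\partial_x^2)\) at order \(n\), so the two agree only on polynomials of degree at most \(n\). That is all you use, so nothing breaks.

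\textbf{The gap is orthogonality.} You leave it open, and every ``consequently'' depends on it:
\begin{itemize}
\item symmetry and positive definiteness of \(E_{n;\tau,\sigma}\);
\item the claim that the modes with \(m\geq2\) span \(\one^\perp\), which needs \(\langle v^{(m,\tau)},\one\rangle=0\);
\item the operator-norm statement.
\end{itemize}
Neither route you sketch is carried out. The citation of \cite{Sasaki15} remains a pointer until you actually match the \(v^{(m,\tau)}\) with the classical eigenvectors in the right normalization. The residue route is not set up: any rational function with residues \(f(h_j)g(h_j)/p'(h_j)^2\) at the \(h_j\) also has poles at the zeros of \(p'\), so the sum does not close by itself.

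The missing idea is in your remark that \(\sum_j f(h_j)g(h_j)/p'(h_j)^2\) is Gauss-type ``only after weighting.'' For Hermite nodes it is exactly Gauss--Hermite quadrature. By the Appell identity \(H_{n,\tau}'=nH_{n-1,\tau}\), the Christoffel numbers for the Gaussian of variance \(\tau\) are
\[
 \omega_j=\frac{\|H_{n-1,\tau}\|^2}{H_{n-1,\tau}(h_j)\,H_{n,\tau}'(h_j)}
 =\frac{n!\,\tau^{n-1}}{p'(h_j)^2}.
\]
Since \(p^{(m)}=(n)_mH_{n-m,\tau}\) and \(\deg\bigl(p^{(m)}p^{(\ell)}\bigr)\leq2n-3\) lies within the exactness range \(2n-1\), for \(m\neq\ell\) you get
\[
 \langle v^{(m,\tau)},v^{(\ell,\tau)}\rangle
 =\frac{(n)_m(n)_\ell}{n!\,\tau^{n-1}}
  \int_{\R}H_{n-m,\tau}(x)H_{n-\ell,\tau}(x)\,
  \frac{e^{-x^2/(2\tau)}}{\sqrt{2\pi\tau}}\,dx=0 .
\]
This is the paper's proof.

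There is also a route inside your own heat-flow framework. \(E_{n;\tau,\sigma}\) is the Jacobian of the time-\(\sigma\) flow of \(r'=s(r)\) along \(r(t)=\sqrt{\tau+t}\,h^{(1)}\). Its variational matrix \(Ds(r(t))=(\tau+t)^{-1}Ds(h^{(1)})\) is a scalar multiple of one fixed symmetric Hessian. Hence \(E_{n;\tau,\sigma}=\exp\bigl(\log\tfrac{\tau+\sigma}{\tau}\,Ds(h^{(1)})\bigr)\) is symmetric. Your eigenvalues are pairwise distinct, so they force the \(v^{(m,\tau)}\) to be orthogonal.
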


\begin{proof}
For \(p(x)=\sum_{k=0}^n(-1)^ka_kx^{n-k}\), with \(a_0=1\), introduce
the differential symbol
\begin{equation}\label{eq:differential-symbol}
 \mathcal D_p(z)=\sum_{k=0}^n\frac{(-1)^ka_k}{(n)_k}z^k,
 \qquad (n)_0=1,\quad (n)_k=n(n-1)\cdots(n-k+1).
\end{equation}
Then \(p=\mathcal D_p(\partial_x)x^n\), and the coefficient formula for
finite free convolution gives
\(\mathcal D_{p\boxplus_nq}\equiv\mathcal D_p\mathcal D_q
\pmod{z^{n+1}}\).  The symbol of \(H_{n,\sigma}\) is the degree-\(n\)
truncation of \(\exp(-\sigma z^2/2)\).  Hence convolution with this
polynomial is the heat operator
\begin{equation}\label{eq:heat-operator}
 L_\sigma p=p\boxplus_nH_{n,\sigma}
 =\exp\!\left(-\frac{\sigma}{2}\partial_x^2\right)p.
\end{equation}
In particular, \(L_\sigma H_{n,\tau}=H_{n,\tau+\sigma}\).  If
\(\lambda=\sqrt{(\tau+\sigma)/\tau}\), its roots are
\(\lambda h_i^{(\tau)}\).

Perturb \(H_{n,\tau}\) to
\(H_{n,\tau}+\epsilon H_{n,\tau}^{(m)}\).  Its roots remain simple and
real for sufficiently small real \(\epsilon\).  Implicit differentiation
at the input root \(h_i^{(\tau)}\) gives the velocity
\(v_i^{(m,\tau)}\).  Since \(L_\sigma\) commutes with differentiation,
the output perturbation is \(H_{n,\tau+\sigma}^{(m)}\).  Its root velocity
at \(\lambda h_i^{(\tau)}\) is therefore
\begin{equation}\label{eq:scaled-root-velocity}
 -\frac{H_{n,\tau+\sigma}^{(m)}(\lambda h_i^{(\tau)})}
 {H_{n,\tau+\sigma}'(\lambda h_i^{(\tau)})}
 =\lambda^{1-m}v_i^{(m,\tau)}.
\end{equation}
This proves \eqref{eq:En-eigenvalues}.  If a linear combination of the
vectors \(v^{(m,\tau)}\) vanishes, the corresponding combination of the
polynomials \(H_{n,\tau}^{(m)}\) vanishes at all \(n\) roots of
\(H_{n,\tau}\).  Its degree is at most \(n-1\), so it is zero.
The derivatives have distinct degrees, and hence all coefficients of the
combination vanish.  Thus the modes form a basis.

To check Euclidean orthogonality, use the Appell identity
\(H_{n,\tau}^{(m)}=(n)_mH_{n-m,\tau}\), which gives
\begin{equation}\label{eq:mode-quadrature-form}
 v_i^{(m,\tau)}
 =-\frac{(n)_m}{n}\,
 \frac{H_{n-m,\tau}(h_i^{(\tau)})}
      {H_{n-1,\tau}(h_i^{(\tau)})}.
\end{equation}
For the Gaussian measure with variance \(\tau\), the Gauss--Hermite
quadrature weights at these nodes have the form
\(\omega_i=C_{n,\tau}/H_{n-1,\tau}(h_i^{(\tau)})^2\), with
\(C_{n,\tau}>0\) (see \cite{Szego75}).  When \(m\ne\ell\), the product
\(H_{n-m,\tau}H_{n-\ell,\tau}\) has degree at most \(2n-2\), within the
range of exactness of the quadrature rule.  Hermite orthogonality yields
\begin{equation}\label{eq:mode-orthogonality}
 \sum_{i=1}^n v_i^{(m,\tau)}v_i^{(\ell,\tau)}
 =\frac{(n)_m(n)_\ell}{n^2C_{n,\tau}}
 \sum_{i=1}^n\omega_i
 H_{n-m,\tau}(h_i^{(\tau)})H_{n-\ell,\tau}(h_i^{(\tau)})=0.
\end{equation}
The eigenbasis is therefore orthogonal, and its eigenvalues are positive.
This proves symmetry and positive definiteness.  Finally,
\(v^{(1,\tau)}=-\one\), so the remaining modes span \(\one^\perp\),
where the largest eigenvalue is the \(m=2\) value.
\end{proof}

The connection with the classical root matrix can be read directly from
the heat flow.  Let \(h=h^{(1)}\), and set \(A_n=-Ds(h)\), so that
\begin{equation}\label{eq:classical-hermite-root-matrix}
 (A_n)_{ii}=\sum_{j\ne i}\frac1{(h_i-h_j)^2},
 \qquad (A_n)_{ij}=-\frac1{(h_i-h_j)^2}\quad(i\ne j).
\end{equation}
Suppose that \(p_t=L_tp\) has simple roots \(r_i(t)\).  Differentiation of
\(p_t(r_i(t))=0\) gives \(r_i'(t)=p_t''(r_i(t))/(2p_t'(r_i(t)))\).
The ratio on the right is \(s(r(t))_i\).
Along the Hermite solution \(r(t)=\sqrt{\tau+t}\,h\), the Jacobian
\(U(t)\) of this root flow satisfies
\(U'(t)=-(\tau+t)^{-1}A_nU(t)\) and \(U(0)=I\).  Thus
\begin{equation}\label{eq:hermite-jacobian-exponential}
 E_{n;\tau,\sigma}
 =\exp\!\left(-A_n\log\frac{\tau+\sigma}{\tau}\right).
\end{equation}
After converting between the two Hermite normalizations, the classical
matrix in \cite[\S4.1]{Sasaki15} is \(2A_n\).  Its eigenvectors are the
ratios in \eqref{eq:mode-quadrature-form}, and its eigenvalues are
\(0,1,\ldots,n-1\).  Formula~\eqref{eq:hermite-jacobian-exponential}
therefore identifies the finite-free coupling spectrum with the exponential
of that classical spectrum.

From now on, write \(v^{(m)}=v^{(m,1)}\).

\begin{example}[The four Hermite modes in degree four]
\label{ex:degree-four-modes}
Write \(a=\sqrt{3-\sqrt6}\) and \(b=\sqrt{3+\sqrt6}\).
The polynomial \(H_{4,1}(x)=x^4-6x^2+3\) has increasing root vector
\(h=(-b,-a,a,b)\).  The definition
\(v_i^{(m)}=-H_{4,1}^{(m)}(h_i)/H_{4,1}'(h_i)\), together with
\(h_i^4-6h_i^2+3=0\), gives
\begin{align}\label{eq:degree-four-modes}
 v^{(1)}&=-(1,1,1,1),&
 v^{(2)}&=-h,\notag\\
 v^{(3)}&=\sqrt6(-1,1,1,-1),&
 v^{(4)}&=\sqrt6(1/b,-1/a,1/a,-1/b).
\end{align}
For instance, \(H_{4,1}'(x)=4x(x^2-3)\) and
\(H_{4,1}''(x)=12(x^2-1)\), so
\(v_i^{(2)}=-3(h_i^2-1)/(h_i(h_i^2-3))=-h_i\), where the last equality
uses \(h_i^2(h_i^2-3)=3(h_i^2-1)\).
The third and fourth derivatives are \(24x\) and \(24\), respectively;
substitution at the four roots gives the remaining two vectors.
The first and third vectors are even under reversal of the coordinates,
and the second and fourth are odd.  The mixed inner products vanish by
parity.  The two remaining inner products are
\(\langle v^{(1)},v^{(3)}\rangle=0\) and
\(\langle v^{(2)},v^{(4)}\rangle=\sqrt6(1-1-1+1)=0\).
Their eigenvalues under \(E_4\) are
\(1,2^{-1/2},2^{-1},2^{-3/2}\), respectively.
Translation is the first mode and dilation the second.  When mean and
variance are fixed, the two remaining modes describe changes of shape.
The derivative of normalized self-convolution has multipliers
\(2^{-1/2}\) and \(2^{-1}\) on those modes, as the next section shows.
\end{example}

Commutativity gives the full derivative at the Hermite diagonal as
\(D\Omega(h,h)[u\oplus v]=E_n(u+v)\).  For a unit vector
\(e_m\) in the direction \(v^{(m)}\),
\Cref{thm:exact-spectrum} gives singular value
\(2^{(2-m)/2}\) on \(2^{-1/2}(e_m,e_m)\), while
\(2^{-1/2}(e_m,-e_m)\) lies in the kernel.

\section{Local convergence in root distance}
\label{sec:local-dynamics}

Finite free cumulants already describe normalized self-convolution by an
exact diagonal evolution \cite{AP18}. The root spectrum above permits a
complementary metric statement, with a one-step constant arbitrarily close
to \(2^{-1/2}\) in the ordinary Euclidean norm.  We work throughout on the
manifold of increasingly ordered real roots with mean zero and variance
\(n-1\), namely
\begin{equation}\label{eq:intro-shape-manifold}
 \cM_n=\left\{r\in\R^n:r_1<\cdots<r_n,
 \ \langle r,\one\rangle=0,
 \ \lVert r\rVert_2^2=n(n-1)\right\}.
\end{equation}
For \(P_r(x)=\prod_i(x-r_i)\), define
\begin{equation}\label{eq:intro-T}
 T_n(r)=\operatorname{roots}\!\left(
 2^{-1/2}_*(P_r\boxplus_nP_r)\right),
\end{equation}
with roots listed increasingly. Simplicity of the convolution roots and
additivity of mean and variance give \(T_n(\cM_n)\subset\cM_n\).
The Hermite vector \(h=h^{(1)}\) belongs to \(\cM_n\) and satisfies
\(T_n(h)=h\).

For clarity, the cumulant description can be obtained from the same
differential symbol used above. In the normalization of \cite{AP18}, the
finite free cumulants \(\kappa_m(p)\), \(1\leq m\leq n\), are determined by
the formal identity
\begin{equation}\label{eq:finite-free-cumulant-symbol}
 \log\mathcal D_p(z)
 \equiv-\sum_{m=1}^n\frac{\kappa_m(p)}{m n^{m-1}}z^m
 \pmod{z^{n+1}}.
\end{equation}
The logarithm is defined formally because \(\mathcal D_p(0)=1\).
Multiplication of symbols and dilation give, respectively,
\(\kappa_m(p\boxplus_nq)=\kappa_m(p)+\kappa_m(q)\) and
\(\kappa_m(c_*p)=c^m\kappa_m(p)\).  Consequently,
\begin{equation}\label{eq:exact-cumulant-dynamics}
 \kappa_m(P_{T_n^k(r)})
 =2^{k(1-m/2)}\kappa_m(P_r),
 \qquad 1\leq m\leq n,\quad k\geq0.
\end{equation}
Here \(\kappa_1(p)\) is the root mean and
\(\kappa_2(p)=n\Var(p)/(n-1)\); on \(\cM_n\) they are \(0\) and \(n\).
The Hermite symbol shows that its higher cumulants vanish.  The map from
coefficients to cumulants is triangular with nonzero diagonal, and the map
from simple ordered roots to coefficients has invertible Vandermonde
Jacobian.  Hence \((\kappa_3,\ldots,\kappa_n)\) are analytic local
coordinates on \(\cM_n\) near \(h\).  In these coordinates,
\eqref{eq:exact-cumulant-dynamics} is an exact linearization. This is the
same cumulant mechanism used for the finite free central limit theorem in
\cite[Example~6.1]{AP18}. A related multilinear model realizes additive convolution as multiplication in a squarefree algebra, with cumulants obtained
from its nilpotent logarithm~\cite[\S4]{Sinclair26}.

We next obtain estimates in root distance, retaining the Euclidean
orthogonality of the modes.  The tangent space is
\begin{equation}\label{eq:tangent-space}
 T_h\cM_n=\{u\in\R^n:\langle u,\one\rangle=0,
 \ \langle u,h\rangle=0\}.
\end{equation}
Since \(v^{(1)}=-\one\) and \(v^{(2)}=-h\), this space is spanned by
\(v^{(3)},\ldots,v^{(n)}\).  For \(n=2\), the manifold \(\cM_2\) is a
singleton and there are no shape directions.  We therefore assume
\(n\geq3\) for the remaining statements.

\begin{proposition}[Linearized normalized self-convolution]
\label{prop:linearized-self-convolution}
The analytic extension of \(T_n\) to a neighborhood of \(h\) in the root
chamber has derivative \(DT_n(h)=\sqrt2E_n\).  On \(T_h\cM_n\), its
eigenvalues are
\begin{equation}\label{eq:shape-eigenvalues}
 2^{-1/2},2^{-2/2},\ldots,2^{-(n-2)/2}.
\end{equation}
Its Euclidean operator norm on this tangent space is \(2^{-1/2}\).
\end{proposition}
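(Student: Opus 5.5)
The plan is to differentiate the definition of \(T_n\) through the three maps it composes. The root map \(r\mapsto(r,r)\mapsto\Omega(r,r)\) is analytic near \(h\), because \(P_h\) is simple and its self-convolution is \(H_{n,2}\), which is simple. The dilation by \(2^{-1/2}\) is linear, so the analytic extension of \(T_n\) is \(r\mapsto2^{-1/2}\Omega(r,r)\). By the chain rule,
\[
 DT_n(h)u=2^{-1/2}\,D\Omega(h,h)[u\oplus u].
\]
The remark after \Cref{ex:degree-four-modes} gives \(D\Omega(h,h)[u\oplus v]=E_n(u+v)\). This uses commutativity, so that both partial Jacobians at the diagonal equal \(E_n=E_{n;1,1}\). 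Substituting \(v=u\) gives \(DT_n(h)u=2^{-1/2}E_n(2u)=\sqrt2E_nu\).

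Next, I would read off the spectrum from \Cref{thm:exact-spectrum} with \(\tau=\sigma=1\). There \(E_nv^{(m)}=2^{-(m-1)/2}v^{(m)}\), so \(\sqrt2E_n\) has eigenvalue \(2^{-(m-2)/2}\) on \(v^{(m)}\). The tangent space \eqref{eq:tangent-space} is the orthogonal complement of \(\one\) and \(h\). Since \(v^{(1)}=-\one\) and \(v^{(2)}=-h\), orthogonality of the modes shows that \(T_h\cM_n\) is spanned by \(v^{(3)},\ldots,v^{(n)}\). This space is invariant under \(DT_n(h)\), and the eigenvalues there are \(2^{-(m-2)/2}\) for \(3\le m\le n\). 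This is exactly the list \eqref{eq:shape-eigenvalues}.

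Finally, I would obtain the operator norm from the orthogonality of the eigenbasis. Because \(E_n\) is symmetric and \(T_h\cM_n\) is spanned by orthogonal eigenvectors, the restriction is a self-adjoint operator on that space. Its norm is therefore its largest eigenvalue, which is \(2^{-1/2}\), attained at \(m=3\); this exists since \(n\ge3\).

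The only step needing care is the identification \(D\Omega(h,h)[u\oplus v]=E_n(u+v)\). Two points have to be checked. First, the partial Jacobian in the second slot equals \(E_n\) by the symmetry \(\Omega(\alpha,\beta)=\Omega(\beta,\alpha)\) evaluated at \(\alpha=\beta=h\). Second, the increasing labeling of the output roots is analytic near \(h\), so the ordered root map in \eqref{eq:intro-T} is differentiable. Both follow from simplicity of the output, and everything else is a direct citation of \Cref{thm:exact-spectrum}.
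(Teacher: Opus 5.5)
Your proposal is correct and follows the paper's own argument. The chain rule with both partial Jacobians at the diagonal equal to \(E_n\) gives \(DT_n(h)=2^{-1/2}(E_n+E_n)=\sqrt2E_n\). Restricting \Cref{thm:exact-spectrum} with \(\tau=\sigma=1\) to the modes \(m=3,\ldots,n\) then gives the eigenvalues, and your observation that the restriction is self-adjoint on a subspace spanned by orthogonal eigenvectors just spells out the operator-norm step that the paper leaves implicit.
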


\begin{proof}
The two identical inputs in \eqref{eq:intro-T} contribute the partial
derivatives \(E_n\) and \(E_n\).  The output dilation contributes
\(2^{-1/2}\), giving
\(DT_n(h)=2^{-1/2}(E_n+E_n)=\sqrt2E_n\).  Restrict
\Cref{thm:exact-spectrum} to the modes \(m=3,\ldots,n\).
\end{proof}

\begin{theorem}[Nonlinear local contraction]
\label{thm:nonlinear-local-contraction}
For every \(\eps>0\), there is \(\delta=\delta(n,\eps)>0\) such that
\(\{r\in\cM_n:\lVert r-h\rVert_2<\delta\}\) is forward invariant and
\begin{equation}\label{eq:one-step-local-contraction}
 \lVert T_n(r)-h\rVert_2
 \leq(2^{-1/2}+\eps)\lVert r-h\rVert_2
\end{equation}
on that neighborhood.  In particular,
\begin{equation}\label{eq:iterated-local-contraction}
 \lVert T_n^k(r)-h\rVert_2
 \leq(2^{-1/2}+\eps)^k\lVert r-h\rVert_2
 \qquad(k\geq0).
\end{equation}
\end{theorem}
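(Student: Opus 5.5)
The argument is a standard mean-value/Taylor estimate around the fixed point \(h\), combined with \Cref{prop:linearized-self-convolution}. One point needs care: the derivative \(DT_n(h)=\sqrt2E_n\) has eigenvalue \(1\) on \(v^{(1)}\) and \(\sqrt2\cdot2^{-1/2}=1\) on \(v^{(2)}\). So the contraction has to come from the fact that \(r-h\) is nearly tangent to \(\cM_n\), not from the full derivative.

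\textbf{Step 1 (splitting the displacement).} For \(r\in\cM_n\) write \(d=r-h\). Both points have mean zero, so \(\langle d,\one\rangle=0\). Both have squared norm \(n(n-1)\), so
\[
 2\langle d,h\rangle=\lVert r\rVert_2^2-\lVert h\rVert_2^2-\lVert d\rVert_2^2=-\lVert d\rVert_2^2 .
\]
Hence the component of \(d\) along \(h\) is \(O(\lVert d\rVert_2^2)\). Let \(d_T\) be the orthogonal projection of \(d\) onto \(T_h\cM_n=\operatorname{span}\{v^{(3)},\ldots,v^{(n)}\}\). Then
\[
 d=d_T+\mu h,\qquad |\mu|\lVert h\rVert_2\leq \lVert d\rVert_2^2/(2\lVert h\rVert_2),
\]
and in particular \(\lVert d_T\rVert_2\leq\lVert d\rVert_2\).

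\textbf{Step 2 (Taylor expansion).} \(T_n\) extends analytically to a neighborhood of \(h\) in the open root chamber: the output is simple by Fujie's theorem, so the implicit-function argument of \Cref{sec:preliminaries} applies. By Taylor's theorem with a uniform bound on the second derivative over a small closed ball, there is a constant \(C=C(n)\) with
\[
 \bigl\lVert T_n(r)-h-\sqrt2E_n d\bigr\rVert_2\leq C\lVert d\rVert_2^2
\]
for \(\lVert d\rVert_2\leq\delta_0\).

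\textbf{Step 3 (contraction).} Write \(\sqrt2E_nd=\sqrt2E_nd_T+\mu\sqrt2E_nh\). By \Cref{thm:exact-spectrum}, the modes are orthogonal and \(E_n\) is symmetric, so \(\sqrt2E_n\) maps \(T_h\cM_n\) to itself with operator norm \(2^{-1/2}\). This gives \(\lVert\sqrt2E_nd_T\rVert_2\leq2^{-1/2}\lVert d\rVert_2\). For the radial part, \(h=-v^{(2)}\) has eigenvalue \(1\), so \(\lVert\mu\sqrt2E_nh\rVert_2=|\mu|\lVert h\rVert_2=O(\lVert d\rVert_2^2)\). Combining with Step 2,
\[
 \lVert T_n(r)-h\rVert_2\leq 2^{-1/2}\lVert d\rVert_2+C'\lVert d\rVert_2^2 .
\]
Choose \(\delta\leq\min(\delta_0,\eps/C')\). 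Then \eqref{eq:one-step-local-contraction} holds on the \(\delta\)-ball. Since \(2^{-1/2}+\eps<1\) can be arranged by shrinking \(\eps\), the image lies back in the ball; if \(\eps\) is large, apply the statement with a smaller \(\eps\). We already know \(T_n(\cM_n)\subset\cM_n\), so the neighborhood is forward invariant. Induction then gives \eqref{eq:iterated-local-contraction}.

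The main obstacle is Step 3's handling of the eigenvalue-one directions, meaning the second-order control of the off-tangent component. This is settled by the exact identity \(2\langle d,h\rangle=-\lVert d\rVert_2^2\) from Step 1. Everything else is routine uniform Taylor bounds on a compact ball inside the simple chamber.
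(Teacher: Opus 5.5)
Your proof is correct and is essentially the paper's argument. Both use a quadratic Taylor bound at \(h\), the exact identity \(2\langle h,d\rangle=-\lVert d\rVert_2^2\) to make the radial (multiplier-one) component of size \(\lVert d\rVert_2^2/(2\lVert h\rVert_2)\), the bound \(2^{-1/2}\) on \(T_h\cM_n\), and the replacement of \(\eps\) by \(\min\{\eps,(1-2^{-1/2})/2\}\) to obtain invariance. One harmless slip in your preamble: \(\sqrt2E_n\) multiplies \(v^{(1)}=-\one\) by \(\sqrt2\), not \(1\), since a translation by \(a\) becomes one by \(\sqrt2a\); this is irrelevant because \(\langle d,\one\rangle=0\) exactly.
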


\begin{proof}
The output roots remain simple near \(h\), so \(T_n\) extends to a real
analytic map on a neighborhood of \(h\) in the root chamber.  There are
\(C,R_0>0\) such that
\begin{equation}\label{eq:T-taylor}
 T_n(h+d)-h=DT_n(h)d+R(d),
 \qquad \lVert R(d)\rVert_2\leq C\lVert d\rVert_2^2
\end{equation}
when \(\lVert d\rVert_2<R_0\).
For \(h+d\in\cM_n\), the two constraints give
\(\langle d,\one\rangle=0\) and
\(2\langle h,d\rangle+\lVert d\rVert_2^2=0\).
Write \(d=u+ah\), where \(u\in T_h\cM_n\).  Then
\(a=-\lVert d\rVert_2^2/(2\lVert h\rVert_2^2)\).
Thus the radial component of a constrained displacement is quadratic in
its size.  The radial mode has multiplier \(1\) under \(DT_n(h)\), while
the tangent modes have norm at most \(q=2^{-1/2}\).  It follows that
\begin{align}\label{eq:nonlinear-bound}
 \lVert T_n(h+d)-h\rVert_2
 &\leq q\lVert u\rVert_2+|a|\lVert h\rVert_2
       +C\lVert d\rVert_2^2\notag\\
 &\leq q\lVert d\rVert_2+K\lVert d\rVert_2^2,
 \qquad K=C+\frac1{2\lVert h\rVert_2}.
\end{align}
Set \(\eps_0=\min\{\eps,(1-q)/2\}\), and choose \(\delta<R_0\) with
\(K\delta\leq\eps_0\).  The estimate then holds with the factor
\(q+\eps_0<1\), so the neighborhood is forward invariant.  Iteration
proves the asserted bounds, first with \(\eps_0\) and then with \(\eps\).
\end{proof}

The quadratic remainder also permits the exact dyadic rate.  Its accumulated
effect is a bounded prefactor, because the preceding contraction makes the
sum of the successive distances finite.

\begin{corollary}[Exact dyadic rate in root distance]
\label{cor:exact-dyadic-root-rate}
There are \(\delta_n,B_n>0\) such that, for every \(r\in\cM_n\) with
\(e_0=\lVert r-h\rVert_2<\delta_n\),
\begin{equation}\label{eq:exact-dyadic-root-rate}
 \lVert T_n^k(r)-h\rVert_2
 \leq \exp(B_ne_0)\,2^{-k/2}e_0
 \qquad(k\geq0).
\end{equation}
Thus the prefactor tends to one as the initial root vector tends to \(h\).
\end{corollary}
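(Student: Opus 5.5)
We sharpen the proof of \Cref{thm:nonlinear-local-contraction}, which loses a factor $\eps$ at each step. Put $q=2^{-1/2}$, $e_k=\lVert T_n^k(r)-h\rVert_2$ and $d_k=T_n^k(r)-h$. The goal is the multiplicative recursion
\[
 e_{k+1}\leq q\,e_k\,(1+B e_k)
\]
for a constant $B$ depending only on $n$, valid once $e_k$ is small. Iterating then gives
\[
 e_k\leq q^ke_0\prod_{j<k}(1+Be_j)\leq q^ke_0\exp\Bigl(B\sum_{j\geq0}e_j\Bigr).
\]

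To obtain the recursion, take $\delta_n$ no larger than the $\delta(n,\eps_0)$ of \Cref{thm:nonlinear-local-contraction} for a fixed $\eps_0$ with $q_0=q+\eps_0<1$, say $\eps_0=(1-q)/2$. The neighbourhood is then forward invariant, and every $d_k$ obeys the same constrained decomposition used in that proof. The bound \eqref{eq:nonlinear-bound} reads
\[
 e_{k+1}\leq q\,e_k+K e_k^2=q\,e_k\,(1+q^{-1}Ke_k),
\]
with $K$ depending only on $n$ (through $h$ and the Taylor constant $C$). So we may take $B'=\sqrt2\,K$.

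The only point needing care is that the radial component does not spoil the leading factor. The radial component has multiplier $1$, but its coefficient $a$ is $O(\lVert d\rVert_2^2)$, so it is absorbed into $Ke_k^2$. Also $\lVert u\rVert_2\leq\lVert d\rVert_2$ holds by orthogonality of $u$ and $h$. Hence no $\eps$ enters the linear term, and \eqref{eq:nonlinear-bound} already delivers the sharp linear coefficient $q$. I do not expect a real obstacle here.

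Finally, the theorem gives the geometric bound $e_j\leq q_0^je_0$, so
\[
 \sum_{j\geq0}e_j\leq\frac{e_0}{1-q_0}.
\]
Combining this with $\prod_{j<k}(1+B'e_j)\leq\exp\bigl(B'\sum_je_j\bigr)$ yields \eqref{eq:exact-dyadic-root-rate} with $B_n=B'/(1-q_0)$. The prefactor $\exp(B_ne_0)$ then tends to $1$ as $e_0\to0$.
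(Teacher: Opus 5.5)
Your proposal is correct and follows the paper's proof essentially line by line. You rewrite \eqref{eq:nonlinear-bound} as $e_{j+1}\leq qe_j(1+Ke_j/q)$, use the geometric bound with rate $q_0=(1+q)/2$ (the paper's $\rho$), apply $1+x\leq e^x$, and obtain the same constant $B_n=K/[q(1-\rho)]$.
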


\begin{proof}
Use the constants \(R_0,K\) from \eqref{eq:nonlinear-bound}, put
\(q=2^{-1/2}\) and \(\rho=(1+q)/2\), and choose \(\delta_n<R_0\) so
that \(K\delta_n\leq\rho-q\).  If
\(e_j=\lVert T_n^j(r)-h\rVert_2\), then \(e_j\leq\rho^je_0\) and
\(e_{j+1}\leq qe_j(1+Ke_j/q)\).  Multiplying these inequalities and
using \(1+x\leq e^x\) for \(x\geq0\) gives
\begin{align*}
 e_k
 &\leq q^ke_0\prod_{j=0}^{k-1}\left(1+\frac Kq e_j\right)\\
 &\leq q^ke_0\exp\!\left(\frac Kq\sum_{j=0}^{k-1}e_j\right)
 \leq q^ke_0\exp\!\left(\frac{K e_0}{q(1-\rho)}\right).
\end{align*}
Taking \(B_n=K/[q(1-\rho)]\) proves the claim.
\end{proof}

At the fixed point, the derivative cocycle is \(DT_n(h)^k\).
Taking logarithms of its positive eigenvalues in
\eqref{eq:shape-eigenvalues} gives the Lyapunov exponents
\(\Lambda_j=-(j/2)\log2\), \(1\leq j\leq n-2\), on
\(T_h\cM_n\).

These statements concern the ordered normalized root manifold \(\cM_n\).
They describe both the infinitesimal contraction and its effect on nearby
root vectors.  The largest exponent agrees with the exact evolution of the
third finite free cumulant in \eqref{eq:exact-cumulant-dynamics}.
Arizmendi and Perales proved a global \(m^{-1/2}\) Berry--Esseen estimate
in L\'evy distance, with a constant depending on the degree
\cite{AP20BerryEsseen}.  \Cref{cor:exact-dyadic-root-rate} gives
the corresponding rate in local Euclidean root distance for \(m=2^k\).
Its near-identity prefactor and one-step estimate come from the orthogonal
root geometry.  The neighborhood size and the constants here depend on
\(n\); no estimate uniform in the degree is asserted.

\section{Geometric interpretations}\label{sec:discriminant-geometry}

The split classification and the Hermite equality theorem have two direct
algebraic interpretations.  We record them here to clarify the geometry
of the classified loci.  Neither interpretation is needed for the
contraction estimates in the main text.

\subsection{The split incidence}
The variety of products of linear forms is the classical split
variety studied, for example, by Arrondo and Bernardi \cite{ArrondoBernardi11}. The proposition below compares its dimension with the dimension of its real preimage in the finite free family.

\begin{proposition}[Dimension of the real split incidence]
\label{prop:split-excess}
The variety of products of linear forms
\begin{equation}\label{eq:split-chow-variety}
 \operatorname{Split}_n(\C)
 =\{[\ell_1\cdots\ell_n]:\ell_i\in\C[x,y,t]_1\setminus\{0\}\}
 \subset\mathbf P(\operatorname{Sym}^n\C^3)
\end{equation}
is irreducible of dimension \(2n\) and codimension \(\binom n2\).
For fixed simple real \(\alpha,\beta\), let
\(\Theta_{\alpha,\beta}(w)=[h_w]\), and let
\(\operatorname{Split}_n(\R)\) denote products of real linear forms.
Then
\begin{equation}\label{eq:real-split-fiber}
 \Theta_{\alpha,\beta}^{-1}(\operatorname{Split}_n(\R))
 =\operatorname{span}\{\one\oplus0,\ 0\oplus\one,\
                         \alpha^\circ\oplus\beta^\circ\}.
\end{equation}
Its dimension is three.  The difference from the transverse expected
dimension \(2n-\binom n2\) is \(\binom{n-2}{2}\).
\end{proposition}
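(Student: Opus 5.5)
The proof splits into two independent parts: the classical geometry of $\operatorname{Split}_n(\C)$, and the identification of the real preimage, which is essentially \Cref{thm:intro-split-locus} restated.

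\textbf{The split variety.} Consider the multiplication morphism
\[
\mu:(\mathbf P^2)^{\vee}\times\cdots\times(\mathbf P^2)^{\vee}\to\mathbf P(\operatorname{Sym}^n\C^3),\qquad ([\ell_1],\ldots,[\ell_n])\mapsto[\ell_1\cdots\ell_n].
\]
Its image is $\operatorname{Split}_n(\C)$, which is closed because the source is projective. It is irreducible because the source is irreducible. Unique factorization in $\C[x,y,t]$ shows that a general fibre is an $S_n$-orbit of distinct points, hence finite. Therefore
\[
\dim\operatorname{Split}_n(\C)=2n.
\]
Since $\dim\mathbf P(\operatorname{Sym}^n\C^3)=\binom{n+2}{2}-1$, a routine binomial identity gives
\[
\binom{n+2}{2}-1-2n=\binom n2,
\]
which is the stated codimension.

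\textbf{The real preimage.} The map $\Theta_{\alpha,\beta}$ is well defined on all of $\R^{2n}$, since $h_w(1,0,0)=1$. By definition, $[h_w]$ lies in $\operatorname{Split}_n(\R)$ exactly when $h_w$ is a nonzero scalar multiple of a product of real linear forms. Absorbing the scalar into one factor, this means $h_w$ itself splits into $n$ real linear factors. \Cref{thm:intro-split-locus} then identifies the set of such $w$ with
\[
L_{\alpha,\beta}=\{(a\one+c\alpha^\circ)\oplus(b\one+c\beta^\circ):a,b,c\in\R\}.
\]

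The one point needing care is that $\operatorname{Split}_n(\R)$ should mean products of real linear forms, not real points of $\operatorname{Split}_n(\C)$. A real form can factor into complex-conjugate linear forms. That distinction does not matter here: the real points of $\operatorname{Split}_n(\C)$ meeting the image of $\Theta$ are harmless anyway, because $h_w$ is hyperbolic in the $x$-direction. Any linear factor of $h_w$, normalized to have $x$-coefficient one, has real $x$-roots at every real $(y,t)$, so its coefficients are real. I would include this remark for completeness.

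\textbf{Dimension and excess.} The three spanning vectors $\one\oplus0$, $0\oplus\one$, $\alpha^\circ\oplus\beta^\circ$ are linearly independent. The first two are independent of each other, and $\alpha^\circ\neq0$ because $\alpha$ is simple and $n\geq2$, while $\alpha^\circ\perp\one$. So the preimage has dimension three. Finally,
\[
3-\Bigl(2n-\binom n2\Bigr)=\frac{n^2-5n+6}{2}=\binom{n-2}{2}.
\]

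The only mildly delicate step is the finiteness of the generic fibre of $\mu$, and unique factorization handles it directly.
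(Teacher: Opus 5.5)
Your proposal is correct and follows the paper's proof essentially step for step: the multiplication morphism from $((\mathbf P^2)^{\vee})^n$, whose generic fibres are finite by unique factorization, gives irreducibility and dimension $2n$. The preimage is then read off from \Cref{thm:intro-split-locus}, and the independence of the three generators and the $\binom{n-2}{2}$ arithmetic match the paper's. You also note that hyperbolicity forces every complex linear factor of $h_w$ to have real coefficients, so using the real points of $\operatorname{Split}_n(\C)$ instead would give the same preimage; this is a correct refinement that the paper leaves implicit.
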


\begin{proof}
Multiplication gives a morphism from \((\mathbf P^{2\vee})^n\) onto
\(\operatorname{Split}_n(\C)\).  On the open set of pairwise
nonproportional factors, unique factorization leaves only the finite
ambiguity of their order.  The image is therefore irreducible of
dimension \(2n\), and its codimension is
\(\binom{n+2}{2}-1-2n=\binom n2\).
The inverse-image identity is \Cref{thm:intro-split-locus}; its three
generators are independent because both input variances are positive.
Subtracting the transverse expected dimension gives
\(3-2n+\binom n2=\binom{n-2}{2}\).
\end{proof}

This dimension comparison concerns the real preimage classified by the
theorem.  It does not determine the complex inverse-image scheme or an
intersection multiplicity.  Opposite input translations give the
ineffective direction \(\one\oplus(-\one)\) of
\(\Theta_{\alpha,\beta}\).  Removing this direction lowers both
dimensions by one and leaves the comparison unchanged.

\subsection{The Hermite critical scheme}

The radial score equation also makes sense over \(\C\).  Let
\begin{equation}\label{eq:intro-discriminant-complement}
 U_n=\{z\in\C^n:\Delta(z)\ne0\},
 \qquad \Delta(z)=\prod_{i<j}(z_i-z_j)^2.
\end{equation}
On this ordered discriminant complement, the algebraic one-form
\begin{equation}\label{eq:intro-logarithmic-form}
 \omega_n=d\log\Delta-\sum_i z_i\,dz_i
          =\sum_i\bigl(2s(z)_i-z_i\bigr)\,dz_i
\end{equation}
is regular.  The next proposition adds a reducedness statement to the
classical electrostatic characterization of Hermite zeros
\cite{Szego75}.

\begin{proposition}[The Hermite critical scheme]
\label{thm:intro-hermite-critical-scheme}
Let \(h_1<\cdots<h_n\) be the roots of \(\He_n\).  The zero scheme of
\(\omega_n\) on \(U_n\) is reduced and consists of the \(n!\)
orderings of \(h\).  Every zero satisfies
\(\sum_i z_i=0\) and \(\sum_i z_i^2=n(n-1)\).
\end{proposition}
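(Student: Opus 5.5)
The zero set of \(\omega_n\) on \(U_n\) is the set of ordered simple configurations \(z\in\C^n\) with \(2s(z)_i=z_i\) for every \(i\). The plan has three steps: identify this set with the Hermite roots, derive the two moment identities, and then establish reducedness by showing that the Jacobian of the defining equations is invertible at every zero.

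\emph{Set-theoretic description.} Let \(P(x)=\prod_i(x-z_i)\). At a simple root, \(s(z)_i=P''(z_i)/(2P'(z_i))\), so the equations say \(P''(z_i)=z_iP'(z_i)\) for all \(i\). Consider the polynomial \(P''-xP'+nP\). Its degree is at most \(n-1\), since the \(x^n\) terms cancel, and it vanishes at all \(n\) distinct \(z_i\). Hence it is zero, and \(P\) is the monic solution of the probabilists' Hermite equation, namely \(P=\He_n\). This is the argument of \Cref{prop:weighted-score-equality} with \(c=1/2\), now carried out over \(\C\). The converse holds by \eqref{eq:hermite-score} with \(\tau=1\). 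The zero set is therefore exactly the \(n!\) orderings of \(h\), all of which lie in \(U_n\).

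\emph{Moment identities.} Summing \(2s(z)_i=z_i\) over \(i\) gives \(\sum_i z_i=0\), because the score has coordinate sum zero. Pairing with \(z\) and using \eqref{eq:radial-pairing} gives
\[
\sum_i z_i^2=2\langle z,s(z)\rangle=2\binom n2=n(n-1).
\]
Both computations are algebraic and valid over \(\C\).

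\emph{Reducedness.} This is the main step. The scheme is cut out by the \(n\) regular functions \(F_i=2s(z)_i-z_i\) on the \(n\)-dimensional smooth variety \(U_n\). It is reduced, and in fact étale over the base, at a point exactly when the Jacobian \(DF=2Ds(z)-I\) is invertible there. Permuting coordinates is an automorphism preserving \(\omega_n\), so it suffices to check the ordered point \(z=h\). There \(Ds(h)=-A_n\), with \(A_n\) as in \eqref{eq:classical-hermite-root-matrix}, so \(DF=-(2A_n+I)\). Relation \eqref{eq:hermite-jacobian-exponential} together with \Cref{thm:exact-spectrum} identifies the spectrum. With \(\tau=\sigma\), the identity \(\exp(-A_n\log 2)=E_n\) has eigenvalues \(2^{-(m-1)/2}\), so \(A_n\) has eigenvalues \((m-1)/2\) for \(m=1,\ldots,n\). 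These eigenvalues are nonnegative because \(A_n\) is symmetric and diagonalizable in the orthogonal basis \(v^{(m)}\). Consequently \(2A_n+I\) has eigenvalues \(m\geq1\) and is invertible.

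Since \(DF\) is invertible at every zero, each point of the zero scheme is an isolated reduced point. This completes the argument. The only delicate issue is confirming that the spectral computation applies at \(\tau=1\). It does, because \(A_n\) is defined at \(h=h^{(1)}\), and \eqref{eq:hermite-jacobian-exponential} with \(\tau=\sigma=1\) yields exactly \(E_n\).
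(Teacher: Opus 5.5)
Your proof is correct. The set-theoretic description and the two moment identities match the paper's argument; only the reducedness step differs.

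The paper never computes a spectrum. It observes directly that
\[
 u^{\mathsf T}DF(h)u=-2\sum_{i<j}\frac{(u_i-u_j)^2}{(h_i-h_j)^2}-\|u\|_2^2<0 .
\]
In other words, \(DF(h)=-(2A_n+I)\), where \(A_n\) is a weighted graph Laplacian and hence positive semidefinite, so \(2A_n+I\succeq I\) is invertible. This argument is elementary and uses only the definition of the score. It works at every ordering directly, so no symmetry reduction is needed, and it is independent of \Cref{thm:exact-spectrum}.

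You instead take the exact spectrum of \(A_n\) from \eqref{eq:hermite-jacobian-exponential} and \Cref{thm:exact-spectrum}. Those rest on the heat-flow Jacobian ODE and on Gauss--Hermite quadrature. This is legitimate, and there is no circularity, since \Cref{sec:spectrum} does not use the proposition. It also yields more: \(DF(h)\) has eigenvalues \(-1,-2,\ldots,-n\), so \(\det DF(h)=(-1)^n n!\). But it is far more than reducedness requires.

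One sentence should be corrected. Symmetry of \(A_n\) does not make its eigenvalues nonnegative; it makes them real. Realness is what you actually need to recover \(\lambda\) from \(2^{-\lambda}\). Nonnegativity then follows from the explicit values \((m-1)/2\), or more simply from the Laplacian form of \(A_n\), which on its own already finishes the proof.
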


\begin{proof}
At a zero, \(s(z)_i=z_i/2\).  Summing and pairing with \(z\) give
\begin{equation}\label{eq:critical-normalizations}
 \sum_i z_i=0,
 \qquad \frac12\sum_i z_i^2
 =\sum_i z_i s(z)_i
 =\sum_{i<j}\frac{z_i-z_j}{z_i-z_j}
 =\binom n2.
\end{equation}
Set \(P(x)=\prod_i(x-z_i)\).  At its roots,
\(P''(z_i)/(2P'(z_i))=s(z)_i=z_i/2\).  Thus
\(P''-xP'+nP\) has degree at most \(n-1\) and vanishes at \(n\)
distinct points.  It is zero.  The unique monic degree-\(n\) solution
of this differential equation is \(\He_n\), so every closed point of
the zero scheme is a Hermite ordering, and all orderings occur.

To check reducedness, put \(F_i(z)=2s(z)_i-z_i\).  At a Hermite
ordering \(h\), every nonzero \(u\in\R^n\) satisfies
\begin{equation}\label{eq:critical-jacobian-negative}
 u^{\mathsf T}DF(h)u
 =-2\sum_{i<j}\frac{(u_i-u_j)^2}{(h_i-h_j)^2}
  -\|u\|_2^2<0.
\end{equation}
Hence the Jacobian is invertible over \(\R\), and therefore over
\(\C\).  The Jacobian criterion makes every zero a reduced isolated
point, as asserted.
\end{proof}

\subsection{Affine shapes and the information retained by roots}

The affine group acts freely on ordered configurations in \(U_n\).
Adding a marked point at infinity identifies the quotient with
\(M_{0,n+1}\).  The further quotient
\(\mathfrak X_n=M_{0,n+1}/S_n\) forgets the ordering of the finite
points.  By \Cref{thm:information-equality}, the image of the real
simple-rooted entropy-power equality locus in
\(\mathfrak X_n\times\mathfrak X_n\) is the single pair
\(([\He_n],[\He_n])\).  This is a reformulation of the equality
classification.  The preceding proposition concerns the normalized
ordered critical scheme before taking this quotient.

For the quantitative questions, it matters which quotient is used.
Increasingly ordered real roots, centered and normalized by a positive
scale, retain the distinction between a configuration and its reflection.
The affine moduli quotient identifies them.  Separate reflections of two
inputs can change their convolution, so two moduli points and a relative
variance do not, in general, determine the Stam deficit.

\begin{example}[Reflection changes the deficit]\label{ex:reflection-deficit}
Let \(f_c(x)=x^3-3x+c\), where \(|c|<2\).  Its discriminant is
\(108-27c^2>0\), so its roots are simple and real.  Their mean is zero
and their variance is two.  Reflection takes \(f_c\) to \(f_{-c}\),
and hence these polynomials represent the same point of
\(\mathfrak X_3\).  The coefficient formula nevertheless gives
\begin{equation}\label{eq:reflection-convolutions}
 f_1\boxplus_3f_1=x^3-6x+2,
 \qquad f_1\boxplus_3f_{-1}=x^3-6x.
\end{equation}
For a simple centered cubic \(p(x)=x^3+ax+b\), write
\(\Delta=-4a^3-27b^2\).  The computation in
\Cref{ex:cubic-collision-geometry} gives
\(\Phi_3(p)=6a^2/\Delta\).  Thus each input in the two pairs has
reciprocal Fisher information \(3/2\), whereas the two outputs in
\eqref{eq:reflection-convolutions} have reciprocal informations
\(7/2\) and \(4\).  Their Stam deficits are consequently
\(1/2\) and \(1\), despite identical individual moduli points and
variances.

The same family illustrates the distinction for tangent dynamics. Variance-normalized self-convolution sends \(f_c\) to
\(f_{c/\sqrt2}\).  Its multiplier in the real root-shape coordinate \(c\) is \(1/\sqrt2\).  A coordinate on the coarse quotient near the Hermite point is \(c^2\), whose multiplier is instead \(1/2\). The tangent estimates in \Cref{sec:local-dynamics} are statements on the real root manifold \(\cM_n\).
\end{example}

\bibliographystyle{alpha}
\bibliography{references}

\end{document}